\documentclass[12pt]{amsart}
\usepackage{amscd,amssymb,amsthm,amsmath,amssymb,mathrsfs,enumerate,ulem}
\usepackage[matrix,arrow,curve]{xy}
\usepackage[margin=2cm]{geometry}
\usepackage{hyperref}

\newtheorem{theorem}{Theorem}
\newtheorem{proposition}[theorem]{Proposition}
\newtheorem{lemma}[theorem]{Lemma}
\newtheorem*{lemma*}{Lemma}

\newtheorem*{maintheorem*}{Main Theorem}
\newtheorem*{corollary*}{Corollary}
\newtheorem*{conjecture*}{Conjecture}
\newtheorem*{theorem*}{Theorem}
\newtheorem*{question*}{Question}
\newtheorem*{proposition*}{Proposition}
\newtheorem*{remark*}{Remark}

\theoremstyle{definition}
\newtheorem{example}[theorem]{Example}
\newtheorem*{example*}{Example}
\newtheorem{definition}[theorem]{Definition}

\theoremstyle{remark}

\newcommand{\C}{\mathbb{C}}
\newcommand{\PP}{\mathbb{P}}

\makeatletter\@addtoreset{equation}{section} \makeatother

\author{Hamid Abban, Paolo Cascini, Ivan Cheltsov}

\title{A Matsushima theorem for K-polystable polarised smooth Fano threefolds}

\let\origmaketitle\maketitle
\def\maketitle{
  \begingroup
  \def\uppercasenonmath##1{} 
  \let\MakeUppercase\relax 
  \origmaketitle
  \endgroup
}

\begin{document}

\begin{abstract}
We prove that if $X$ is a smooth Fano threefold and $L$ is an ample $\mathbb{Q}$-divisor such that $(X,L)$ is K-polystable,
then the automorphism group $\operatorname{Aut}(X)$ is reductive.
This verifies the reductivity statement predicted by the Yau--Tian--Donaldson conjecture in the setting of smooth Fano threefolds
with arbitrary ample polarisation.
\end{abstract}

\address{\textit{Hamid Abban}\newline
\textnormal{University of Nottingham, Nottingham, England
\newline
\texttt{hamid.abban@nottingham.ac.uk}}}

\address{\textit{Paolo Cascini}\newline
\textnormal{Imperial College, London, England
\newline
\texttt{p.cascini@imperial.ac.uk}}}

\address{ \textit{Ivan Cheltsov}\newline
\textnormal{University of Edinburgh, Edinburgh, Scotland
\newline
\texttt{i.cheltsov@ed.ac.uk}}}

\maketitle

\section{Introduction}
\label{section:intro}

Let $X$ be a projective normal variety and let $L$ be an ample $\mathbb{Q}$-divisor on $X$.
If $X$ is smooth, the famous Yau--Tian--Donaldson conjecture states that the pair $(X,L)$ is K-polystable if and only if
$X$ admits a constant scalar curvature K\"ahler (cscK) metric in $c_1(L)$.
This conjecture holds in two notable cases: toric surfaces \cite{CodogniStoppa,Donaldson2002,Donaldson2009}
and Fano varieties polarised by their anticanonical divisors~\cite{CDS}. Although the K-polystability of the pair $(X,L)$ is a necessary condition for the existence of a cscK metric \cite{BDL16,Stoppa}, and recent progress has been made in the general case (see for example \cite{BJ,Li2022}), the conjecture remains elusive. On the other hand,
the non-reductivity of the automorphism group is an obstruction for the existence of such metrics \cite{Mat57}, which is the focus of this article in relation to K-polystability: if the variety $X$ is smooth and the pair $(X,L)$ is K-polystable, then the Yau--Tian--Donaldson conjecture implies that $\mathrm{Aut}(X)$ is reductive.
Note that Alper, Blum, Halpern-Leistner and Xu proved that K-polystability of a Fano variety $(X,-K_X)$ implies reductivity of $\mathrm{Aut}(X)$ \cite{AlperBlumHalpernLeistnerXu}.
It is expected that the group $\mathrm{Aut}(X)$ is reductive if $(X,L)$ is K-polystable for some ample $\mathbb{Q}$-divisor $L$ on $X$. The goal of this paper is to verify this for smooth Fano threefolds.

\begin{maintheorem*}
Let $X$ be a smooth Fano threefold, and let $L$ be an ample $\mathbb{Q}$-divisor on $X$.
If $(X,L)$ is K-polystable, then $\mathrm{Aut}(X)$ is reductive.
\end{maintheorem*}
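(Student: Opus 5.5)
The plan is to use the classification of smooth Fano threefolds with infinite automorphism groups and reduce to a short list of families where $\mathrm{Aut}^0(X)$ is non-reductive. We may assume $\mathrm{Aut}^0(X)$ is non-reductive: $\mathrm{Aut}(X)$ is reductive exactly when $\mathrm{Aut}^0(X)$ is, since $\mathrm{Aut}(X)/\mathrm{Aut}^0(X)$ is finite for Fano $X$. By the work of Prokhorov, Cheltsov--Shramov, Kuznetsov--Prokhorov and Przyjalkowski, there are finitely many deformation families (roughly twenty Mori--Mukai families) whose members can have a non-reductive connected automorphism group. In each case $X$ carries a nontrivial unipotent action, say of $\mathbb{G}_a$, normalised by a torus.

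The key tool will be a destabilising test configuration built from the unipotent radical. Suppose $R_u=R_u(\mathrm{Aut}^0(X))\neq 1$. Then there is a one-parameter subgroup $\lambda\colon\mathbb{G}_m\to\mathrm{Aut}^0(X)$ acting with positive weight on $\mathrm{Lie}(R_u)$. We use $\lambda$ to produce a product test configuration $(X\times\mathbb{A}^1,L)$ with Donaldson--Futaki invariant $\mathrm{DF}_L(\lambda)$. K-polystability forces two things: $\mathrm{DF}_L(\lambda)=0$ for every product configuration, and every special test configuration with vanishing DF must be a product. Fixing a weight-$k$ vector $v\in\mathrm{Lie}(R_u)$ gives $\mathrm{Ad}(\lambda(t))v=t^kv$. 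Hence the $\mathbb{G}_m\ltimes\mathbb{G}_a$ action degenerates, and we get the standard contradiction: the Futaki character $\mathrm{Fut}_L$ is a Lie algebra character, so it vanishes on $[\mathfrak{t},\mathfrak{u}]=\mathfrak{u}$. That alone is not a contradiction, so this general route will not close the argument. Instead, for each family we will construct explicitly a divisorial valuation or a $\mathbb{G}_m$-equivariant degeneration with $\mathrm{DF}=0$ but non-product central fibre, or with $\mathrm{DF}<0$.

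Concretely, in each case we locate a $\mathrm{Aut}^0(X)$-invariant prime divisor or surface $E\subset X$. Typical candidates are the exceptional divisor of a blow-up, or the fixed surface of the Borel subgroup. We then compute the Fujita--Li type invariant
\[
\beta_L(E)=A_X(E)\,L^3-\int_0^\infty \mathrm{vol}(L-uE)\,du
\]
for all ample $L$ in the nef cone of $X$. This nef cone is polyhedral and generated by the pullbacks from the extremal contractions, which are explicit for these families. The target is to show that $\beta_L(E)\le 0$ for every ample $L$. Equality is allowed only if $E$ induces a product test configuration coming from a $\mathbb{G}_m$ in $\mathrm{Aut}^0(X)$ that fails to commute with $R_u$. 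Invariance of $E$ under a non-reductive group lets us invoke the Zhuang/Li--Wang--Xu type criterion: a $G$-invariant divisor with $\beta=0$ whose induced degeneration is non-product contradicts polystability. More simply, when $\beta<0$ we obtain instability directly.

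The main obstacle is the volume computation $\mathrm{vol}(L-uE)$ across the whole ample cone. This requires Zariski decompositions on threefolds of Picard rank two to four, with case-dependent chambers. It must also produce a sign uniform in $L$, and the strictly negative sign may fail on some walls. On such walls, where $\beta=0$, we would try a second invariant divisor, for instance one obtained by a further $\mathbb{G}_m$-equivariant blow-up of an invariant curve. We would also use that the resulting filtration is not induced by a one-parameter subgroup centralising $R_u$. This is the step where most of the case-by-case work will lie.
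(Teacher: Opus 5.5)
\textbf{There is a genuine gap, in two places.} Your reduction to the finitely many threefolds with non-reductive $\mathrm{Aut}^0(X)$ is the same as the paper's. The first problem is the destabilising invariant you then rely on, which is not the right one.

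The quantity $A_X(E)L^3-\int_0^\infty\mathrm{vol}(L-uE)\,du$ is the Fujita--Li invariant. It computes (a multiple of) a Donaldson--Futaki invariant only when $L$ is proportional to $-K_X$. For a general ample $L$ it does not even depend only on the ray of $L$. Replacing $L$ by $sL$ multiplies the first term by $s^3$ and the integral by $s^4$. So after rescaling it is negative for every $E$, and its sign says nothing about K-stability of $(X,L)$. What you need is the Dervan--Legendre invariant
\[
\beta_L(F)=A_X(F)+\frac{n\mu}{L^n}\int_0^\infty\mathrm{vol}(L-uF)\,du+\frac{1}{L^n}\int_0^\infty\frac{\partial\,\mathrm{vol}(L+tK_X-uF)}{\partial t}\Big|_{t=0}du,\qquad \mu=\frac{-K_X\cdot L^{n-1}}{L^n},
\]
together with their theorem that $\beta_L(F)<0$ forces K-instability. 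The $\mu$-term and the $K_X$-derivative term are where the canonical class enters. Without them there is no link to the Donaldson--Futaki invariant of $(X,L)$. Even with the correct invariant, a single divisor does not always suffice. For \textnumero 3.18, 4.8, 4.9 and 4.11 the paper uses two divisors and a polynomial argument showing that one of the two $\beta$'s is negative.

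The second problem is that the $\beta$-route, as you set it up, cannot handle every case. For the special members of \textnumero 2.21 and \textnumero 3.13 one has $\mathrm{Aut}^0(X)\cong\mathbb{G}_a$, so there is no torus normalising the unipotent part, contrary to your claim. These threefolds are K-semistable for $-K_X$, since they isotrivially degenerate to the K-polystable members $X_0$ with $\mathrm{Aut}^0(X_0)\cong\mathrm{PGL}_2(\mathbb{C})$. So at $L=-K_X$ no divisor over $X$ has negative $\beta$, and you are forced into the equality case. There your appeal to a ``Zhuang/Li--Wang--Xu type criterion'' is unavailable: those valuative criteria are about anticanonically polarised Fano varieties. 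Moreover, $\beta_L(E)=0$ for a divisor inducing a product configuration contradicts nothing.

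The missing idea is the one you discarded too early. The Futaki character is a Lie algebra character, but you should apply this on the central fibre $X_0$ of the degeneration $X_\epsilon\rightsquigarrow X_0$, not on $X$. Since $\mathfrak{sl}_2$ is perfect, the Futaki character of $(X_0,L_0)$ vanishes for every polarisation. Hence this special test configuration has $\mathrm{DF}=0$ while $X_0\not\cong X$, which is exactly how the paper rules out K-polystability.

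For \textnumero 2.26 and \textnumero 3.16 the paper instead takes the product test configuration of an explicit $\mathbb{C}^*\subset\mathrm{Aut}^0(X)$. Its Futaki invariant, computed by Atiyah--Bott localisation, is nonzero for every ample $L$. Your plan neither does this nor supplies a divisor with $\beta_L<0$ for these families. (The \textnumero 1.10 member is immediate: Picard rank one makes $L$ a positive multiple of $-K_X$, and Alper--Blum--Halpern-Leistner--Xu applies.)

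As it stands, the proposal does not identify any of the destabilising divisors or test configurations. That is where the actual content of the proof lies.
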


Note that, in \cite{CheltsovMartinez-Garcia}, this assertion is verified for smooth two-dimensional Fano varieties (del Pezzo surfaces). For convenience, we will present a short proof of this fact in Example~\ref{example:F1-dP7}.

Smooth Fano threefolds have been classified into 105 families by Iskovskikh, Mori and Mukai.
In this paper, we follow the Mori--Mukai numbering of the 105 families, written as ``Family \textnumero $m.n$'',
in which $m$ is the rank of the Picard group of the threefold, ranging from 1 to 10, and $n$ is simply a list number.
For a description of these deformation families, we refer the reader to \cite{Fanography,IsPr99}.
Recall from \cite{CheltsovShramovPrzyjalkowski} that all smooth members of the following 22 Families have non-reductive automorphism groups:
\begin{center}
Families \textnumero~2.28, \textnumero 2.30, \textnumero 2.31, \textnumero 2.33, \textnumero 2.35, \textnumero 2.36, \\
\textnumero 3.16, \textnumero 3.18, \textnumero 3.21, \textnumero 3.22, \textnumero 3.23, \textnumero 3.24, \textnumero 3.26, \textnumero 3.28,
\textnumero 3.29, \textnumero 3.30, \textnumero 3.31,\\  \textnumero 4.8, \textnumero 4.9, \textnumero 4.10, \textnumero 4.11, \textnumero 4.12.
\end{center}
All smooth members of these Families are K-unstable with respect to the anticanonical polarisation.
Note that Family \textnumero~2.28 has one-dimensional moduli, while each of the remaining 21 Families contains a unique smooth Fano threefold.
In addition, each family among the
\begin{center}
Families \textnumero 1.10, \textnumero 2.21, \textnumero 2.26, \textnumero 3.13
\end{center}
contains a unique smooth Fano threefold with non-reductive automorphism group. The unique smooth member of Family \textnumero 1.10 that has a non-reductive automorphism group is not K-polystable with respect to any polarisation as its Picard rank is one. This threefold
is simply ignored in the proof of the Main Theorem. Short descriptions of the remaining smooth Fano threefolds with non-reductive automorphism groups are given in Table~\ref{table:Fanos}.

\subsection*{Structure of the proof}

By the classification of smooth Fano threefolds with non-reductive automorphism group, it suffices to consider the families listed in Table~1.
For each such threefold~$X$, and for every ample $\mathbb{Q}$-divisor $L$ on $X$, we show that $(X,L)$ fails to be K-polystable.

The proof relies on two main destabilisation mechanisms, together with explicit birational constructions adapted to the geometry of each family.

\medskip

\noindent
\textbf{(1) Destabilisation via test configurations and Futaki invariants.}
In several cases, we use explicit test configurations whose Donaldson--Futaki invariant is non-positive, and which are not product configurations. This shows that $(X,L)$ is not K-polystable.

This includes both degenerations to more symmetric central fibres and product test configurations induced by $\C^*$-actions. This method applies to the families
\[
\text{\textnumero } 2.21,\;
\text{\textnumero } 2.26,\;
\text{\textnumero } 3.13,\text{and } \text{\textnumero } 3.16.
\]

These are covered in Section \ref{section:degeneration}.

\medskip

\noindent
\textbf{(2) Destabilisation via the $\beta$-invariant.}
In the remaining cases, we exhibit a geometrically natural prime divisor $F$ over $X$ such that $\beta_L(F) < 0$, which implies K-instability by Theorem \ref{theorem:DervanLegendre}. The divisor $F$ typically arises as an exceptional divisor of a blow-up, or as the strict transform of a distinguished surface appearing in the birational geometry of $X$.

This method applies to the families
\[
\text{\textnumero } 2.28,\;
\text{\textnumero } 3.18,\;
\text{\textnumero } 3.21,\;
\text{\textnumero } 3.22,\;
\text{\textnumero } 3.23,\;
\text{\textnumero } 3.24,\;
\text{\textnumero } 3.29,\;
\text{\textnumero } 4.8,\;
\text{\textnumero } 4.9,\;
\text{\textnumero } 4.10,\;
\text{\textnumero } 4.11,\;
\text{\textnumero } 4.12.
\]
These families are covered in Section~\ref{section:beta}.

\medskip

\noindent
\textbf{(3) Birational reduction and auxiliary extractions.}
In a number of families, the destabilising divisor is not immediately visible on $X$ itself, but appears naturally after passing to a suitable birational model. Using the explicit Mori--Mukai descriptions, we construct birational morphisms that reduce the computation of $\beta_L(F)$ to a simpler model.

This approach is used for the families
\[
\text{\textnumero } 2.30,\;
\text{\textnumero } 2.31,\;
\text{\textnumero } 2.33,\;
\text{\textnumero } 2.35,\;
\text{\textnumero } 2.36,\;
\text{\textnumero } 3.26,\;
\text{\textnumero } 3.28,\;
\text{\textnumero } 3.30,\;
\text{\textnumero } 3.31.
\]
These families are covered in Section~\ref{section:birational-reduction}.

\medskip

Combining these arguments, we show that every smooth Fano threefold with non-reductive automorphism group is K-unstable or at least not K-polystable for every ample polarisation. Therefore, if $(X,L)$ is K-polystable for some ample $\mathbb{Q}$-divisor $L$, then $\operatorname{Aut}(X)$ must be reductive.

\medskip

Roughly speaking, the families in Table~1 split into those handled by explicit $\beta$-invariant computations on the given model, those requiring an auxiliary extraction to reveal the destabilising divisor, and those ruled out by constructing suitable test configurations. The latter sections are organised accordingly, although the birational realisations differ from family to family.

\begin{table}
\label{table:Fanos}
\caption{Smooth Fano threefolds with non-reductive automorphism groups}
\begin{center}
\renewcommand\arraystretch{1.4}
\begin{tabular}{|c|c|c|}
\hline
 \textbf{Family} & \textbf{Short description} & \textbf{Section} \\
\hline
\hline
 \shortstack{\textnumero {2.21}\\{ }\\{ }}&  \shortstack{\\blow-up of the smooth quadric threefold\\ $Q\subset\mathbb{P}^4$ along a twisted quartic curve} &  \shortstack{\S~\ref{subsection:2-21}\\{ }\\{ }}\\
\hline
 {\textnumero 2.26} & blow-up of $Q\subset\mathbb{P}^4$ along a twisted cubic & \S~\ref{subsection:2-26}\\
\hline
\textnumero {2.28} & blow-up of $\mathbb{P}^3$ along a plane cubic curve & \S~\ref{subsection:2-28}\\
\hline
\textnumero {2.30} & blow-up of $\mathbb{P}^3$ along a conic  &\S~\ref{subsection:2-30} \\
\hline
\textnumero {2.31} & blow-up of $Q\subset\mathbb{P}^4$ along a line &   \S~\ref{subsection:2-31}\\
\hline
\textnumero {2.33}& blow-up of $\mathbb{P}^3$ in a~line  & \S~\ref{subsection:2-33}\\
\hline
  \textnumero {2.35} & $V_7=$ blow-up of $\mathbb{P}^3$ in a~point & \S~\ref{subsection:2-35}\\
\hline
  \textnumero {2.36} & $\mathbb{P}\big(\mathcal{O}_{\mathbb{P}^2}\oplus\mathcal{O}_{\mathbb{P}^2}(2)\big)$  & \S~\ref{subsection:2-36} \\
\hline
\shortstack{\\\textnumero {3.13}\\{ }\\{ }} & \shortstack{\\complete intersection in $\mathbb{P}^2\times\mathbb{P}^2\times\mathbb{P}^2$\\ of divisors of degree $(1,1,0)$, $(1,0,1)$, $(0,1,1)$} &\shortstack{\\\S~\ref{subsection:3-13}\\{ }\\{ }} \\
\hline
\shortstack{ {\textnumero 3.16}\\{ }\\{ }\\{ }\\{ }} & \shortstack{\\blow-up of $V_7$ along proper transform of\\ twisted cubic in $\mathbb{P}^3$ containing the blown-up point} &\shortstack{\S~\ref{subsection:3-16}\\{ }\\{ }}  \\
\hline
\textnumero {3.18} & blow-up of $\mathbb{P}^3$ along  union of line and conic & \S~\ref{subsection:3-18}\\
\hline
\textnumero {3.21}  & blow-up of $\mathbb{P}^1\times\mathbb{P}^2$ along curve of degree  $(2,1)$ & \S~\ref{subsection:3-21}\\
\hline
\shortstack{\\\textnumero {3.22}\\{ }\\{ }\\{ }} & \shortstack{\\blow-up of $\mathbb{P}^1\times\mathbb{P}^2$ along conic \\ in fibre of projection $\mathbb{P}^{1}\times\mathbb{P}^2\to\mathbb{P}^1$} & \shortstack{\\\S~\ref{subsection:3-22}\\{ }\\{ }\\{ }}\\
\hline
\shortstack{\textnumero {3.23}\\{ }\\{ }\\{ }\\{ }} & \shortstack{\\blow-up of $V_7$ along proper transform of smooth\\ conic in $\mathbb{P}^3$ passing through the blown-up point} & \shortstack{\S~\ref{subsection:3-23}\\{ }\\{ }}\\
\hline
\textnumero {3.24} & blow-up of $\mathbb{P}^1$-bundle $W\to\mathbb{P}^2$ along a fibre&\S~\ref{subsection:3-24}\\
\hline
  \textnumero {3.26} & blow-up of $\mathbb{P}^3$ in a~line and a~point &  \S~\ref{subsection:3-26}\\
\hline
\textnumero {3.28} & $\mathbb{P}^1\times\mathbb{F}_1$, where $\mathbb{F}_1$ is the 1st Hirzebruch~surface & \S~\ref{subsection:3-28}\\
\hline
 \shortstack{\textnumero {3.29}\\{}\\{}} & \shortstack{{ }\\blow-up of $V_7$ in a~line in \\ the~exceptional divisor of the~blow-up $V_7\to\mathbb{P}^3$} & \shortstack{\S~\ref{subsection:3-29}\\{}\\{}} \\
\hline
  \textnumero {3.30} & blow-up of $\mathbb{P}^1$-bundle $V_7\to\mathbb{P}^2$ in a~fibre & \S~\ref{subsection:3-30}\\
\hline
  \textnumero {3.31} & blow-up of the~quadric cone in its vertex & \S~\ref{subsection:3-31}\\
\hline
\textnumero {4.8}& blow-up of $(\mathbb{P}^1)^3$ along curve of degree $(0,1,1)$ & \S~\ref{subsection:4-8}\\
\hline
\shortstack{ \textnumero 4.9\\{ }\\{ }\\{ }\\{ }} & \shortstack{\\blow-up of $V_7$ along proper transform  of two\\ skew lines in $\mathbb{P}^3$, one through blown-up point} & \shortstack{\S~\ref{subsection:4-9}\\{ }\\{ }}\\
\hline
\shortstack{\textnumero {4.10}\\{}\\{}}  & \shortstack{\\{ }$\mathbb{P}^1\times S_7$, where $S_7$ is the smooth \\ del Pezzo surface of degree $7$} & \shortstack{\S~\ref{subsection:4-10}\\{}\\{}} \\
\hline
 \shortstack{\textnumero {4.11}\\{}\\{}} & \shortstack{{ }\\ blow-up of $\mathbb{P}^1\times\mathbb{F}_1$ in a~$(-1)$-curve of a\\ fibre of the~projection $\mathbb{P}^1\times\mathbb{F}_1\to\mathbb{P}^1$}  & \shortstack{\S~\ref{subsection:4-11}\\{}\\{}} \\
\hline
\shortstack{\textnumero {4.12}\\{}\\{}} & \shortstack{{ }\\ blow-up of the~smooth Fano \textnumero 2.33 in two\\ curves contracted by morphism to~$\mathbb{P}^3$} & \shortstack{\S~\ref{subsection:4-12}\\{}\\{}}\\
\hline
\end{tabular}
\end{center}
\end{table}

Throughout this paper, all varieties are assumed to be projective, normal, and defined over $\mathbb{C}$.

\medskip
\noindent
\textbf{Acknowledgements.}
Hamid Abban is supported by the Royal Society International Collaboration Award ICA$\backslash$1$\backslash$231019.
Paolo Cascini and Ivan Cheltsov are supported by the Simons Collaboration Grant \textit{Moduli of varieties}.
All intermediate symbolic computations were assisted by Maple. We would like to thank Thibaut Delcroix, Chi Li, Cristiano Spotti, Hendrik S\"u\ss,\ Xiaowei Wang, Chenyang Xu and Kewei Zhang for very helpful discussions.

\section{Degenerations and Futaki invariant}
\label{section:degeneration}
In this section we first use a direct argument based on the definition of the Donaldson--Futaki invariant and test configurations
to prove that $(X,L)$ is never K-polystable for smooth Fano threefolds $X$ in Families \textnumero 2.21 and \textnumero 3.13 that have non-reductive automorphism groups,
where $L$ is an ample $\mathbb{Q}$-divisor on $X$. Since K-stability of $(X,L)$ depends only on the class $\mathbb{Q}_{>0}[L]$, we will often assume that $L$ is an ample line bundle on $X$.

\begin{definition}
\label{test-configuration}
Let $\mathcal{X}$ be a normal variety equipped with a line
bundle $\mathcal{L}$, admitting a $\mathbb{C}^*$-action, and let $\pi\colon\mathcal{X}\to\mathbb{A}^1$ be a flat projective morphism
with $\mathcal{L}$ relatively ample, where $\pi$ is equivariant with  respect to the usual $\mathbb{C}^*$-action on $\mathbb{A}^1$.
Then $\mathcal{(X,L)}$ together with this equivariant information is called a {\it test configuration} when
$$
(\mathcal{X}_t,\mathcal{L}_t)\cong (X,L)
$$
for all  $t\neq 0$, where $L$ is an ample line bundle on $X$,  $\mathcal{X}_t=\pi^{-1}(t)$ and $\mathcal{L}_t=\mathcal{L}\vert_{\mathcal{X}_{t}}$.

For any $k\in\mathbb{Z}_{>0}$ there is an induced $\mathbb{C}^*$-action on $H^0(\mathcal{X}_0,\mathcal{L}^k_0)$ with total weight $w(k)$, for large enough $k$, a
polynomial of degree $n+1$ of the form
$$
w(k)=b_0k^{n+1}+b_1k^n+\cdots.
$$
On the other hand, the corresponding Hilbert polynomial is of the form
$$
h(k)= a_0k^n + a_1k^{n-1} +\cdots,
$$
where $h(k)=\dim H^0(\mathcal{X}_0,\mathcal{L}^k_0)$. The Donaldson--Futaki invariant of $\mathcal{(X,L)}$ is defined to be
$$
\mathrm{DF}\mathcal{(X,L)} = \frac{b_0a_1-b_1a_0}{a_0}.
$$
With this setup, $(X,L)$ is said to be {\it K-semistable} if for all test configurations $\mathcal{(X,L)}$, we have
$$
\mathrm{DF}\mathcal{(X,L)}\geqslant 0,
$$
and it is called {\it K-polystable} if it is K-semistable, and the equality $\mathrm{DF}\mathcal{(X,L)}=0$ holds
if and only if $(\mathcal{X}_0,\mathcal{L}_0)\cong (X,L)$ (in this case we say that $\mathcal{(X,L)}$ is a product test configuration).
\end{definition}

We now recall the Atiyah-Bott formula, which will be used to compute the Donaldson--Futaki invariant for
Family \textnumero 2.26 and Family \textnumero 3.16.

Let $X$ be a smooth projective variety of dimension $n$, let $g$ be an automorphism of $X$, and let $E$ be a $g$-equivariant vector bundle on $X$.
Assume that the fixed locus $X^g$ of $g$ is a finite set and that the graph $\Gamma_g$ of $g$ in $X\times X$ intersects the diagonal transversally.  Then, the  Atiyah-Bott formula (e.g. see \cite[Appendix to Exp. III, Cor. 6.12]{SGA5}) gives
$$
\sum_{i=0}^{n} (-1)^i \operatorname{Tr}\bigl(g \mid H^i(X,E)\bigr)
=
\sum_{P \in X^g}
\frac{\operatorname{Tr}(g \mid E_P)}{\det(1-g \mid T_P(X))}.
$$

Assume now that $X$ is equipped with an algebraic $\C^*$-action.
Choose $\lambda\in \C^*$ such that $X^\lambda$ is finite and $\det(1-\lambda\mid T_P(X))\neq 0$ for every $P\in X^\lambda$.
Let $g:=\lambda$.
Then $X^g$ is finite and $\Gamma_g$ meets the diagonal in $X\times X$ transversally.
Suppose also that  $E=L^k$, where $L$ is a $\mathbb{C}^*$-linearised ample line bundle and $k$ is a sufficiently large positive integer such that $H^i(X,L^k)=0$ for any $i>0$. Thus, if we denote
$$
\chi_k(\lambda)
=
\operatorname{Tr}\bigl(\lambda \mid H^0(X,L^k)\bigr).
$$
then the Atiyah--Bott formula gives
$$
\chi_k(\lambda)
=
\sum_{P\in X^{\mathbb{C}^*}}
\frac{\lambda^{k\mu_L(P)}}{\prod_{j=1}^n \bigl(1-\lambda^{-\alpha_j(P)}\bigr)},
$$
where, for any point $P\in X^{\mathbb{C}^*}$, $\alpha_1(P),\dots,\alpha_n(P)$ are the weights of the $\mathbb{C}^*$-action on
the cotangent space $T^*_P(X)$, and $\mu_L(P)$ is the weight of the induced $\mathbb{C}^*$-action on the fiber $L_P$ of  $L$ at the point~$P$. To compute the Donaldson--Futaki invariant, with $h(k)$ and $w(k)$ as above, write $\lambda=e^\varepsilon$ so that
$$
\chi_k(e^\varepsilon)
=
h(k)+\varepsilon\,w(k)+O(\varepsilon^2).
$$

Assume now that $X$ is a threefold. Then, we have
$$
h(k)=a_0k^3+a_1k^2+O(k)
\qquad\text{and}\qquad
w(k)=b_0k^4+b_1k^3+O(k^2).
$$
By expanding the  formula at $\lambda=e^\varepsilon$, we obtain:
\begin{equation}
\label{equation:ab}
b_0=\sum_{P\in X^{\C^*}}\frac{\mu^4_L(P)}{24\,\alpha_{1}(P)\alpha_{2}(P)\alpha_{3}(P)}
\quad
\text{and}
\quad
b_1=\sum_{P\in X^{\C^*}}\frac{\mu^3_L(P)(\alpha_{1}(P)+\alpha_{2}(P)+\alpha_{3}(P))}{12\,\alpha_{1}(P)\alpha_{2}(P)\alpha_{3}(P)}.
\end{equation}

In the following two families, we exhibit degenerations of $(X,L)$ to non-isomorphic models with vanishing Donaldson--Futaki invariants, hence showing that $(X,L)$ is not K-polystable.

\subsection{Family \textnumero 2.21}
\label{subsection:2-21}

Smooth Fano threefolds \textnumero 2.21 are blow-ups of the~smooth quadric threefold along a~twisted quartic curve.
It follows from \cite{CheltsovShramovPrzyjalkowski} that there exists a unique such threefold with non-reductive automorphism group,
and the connected component of its automorphism group is isomorphic to the additive group $\mathbb{G}_a$.
The description of this very special smooth Fano threefold is given in \cite{Book,Malbon}.
Namely, we first fix the~twisted quartic curve $C\subset\mathbb{P}^4$ given parametrically by
$$
[u:v]\mapsto[u^4:u^3v:u^2v^2:uv^3:v^4],
$$
and let $Q_{\epsilon}$ be the~quadric threefold in $\mathbb{P}^4$ that is given by
$$
\epsilon(t^2-zw)+3z^2-4yt+xw=0,
$$
where $\epsilon\in\mathbb{C}$.
Then $Q_{\epsilon}$ is smooth, and $Q_{\epsilon}$ contains the curve $C$.
Let $\pi\colon X_{\epsilon}\to Q_{\epsilon}$ be the blow-up of the curve $C$.
Then $X_{\epsilon}$ is a smooth Fano threefold in Family \textnumero 2.21, and it follows from \cite{Malbon} that
$$
\mathrm{Aut}(X_\epsilon)\simeq\mathrm{Aut}^0(X_\epsilon)\times\big(\mathbb{Z}/2\mathbb{Z}\big)\simeq\mathrm{Aut}(Q_{\epsilon},C)\times\big(\mathbb{Z}/2\mathbb{Z}\big).
$$
Moreover, if $\epsilon \ne 0$, then $X_{\epsilon}\simeq X_{1}$, and it follows from \cite{Malbon} that
$
\mathrm{Aut}^0(X_1)\simeq\mathrm{Aut}(Q_1,C)\simeq\mathbb{G}_a
$.
On the other hand, we have $\mathrm{Aut}^0(X_0)\simeq\mathrm{Aut}(Q_0,C)\simeq\mathrm{PGL}_{2}(\mathbb{C})$.

\begin{lemma}
\label{lemma:2-21}
Let $L$ be any ample $\mathbb{Q}$-divisor on $X_1$. Then $(X_1,L)$ is not K-polystable.
\end{lemma}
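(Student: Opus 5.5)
The plan is to build an explicit test configuration for $(X_1,L)$ out of the family $X_\epsilon$, whose central fibre is $X_0$, and then show its Donaldson--Futaki invariant must vanish. Since $X_0\not\simeq X_1$, such a test configuration is not a product, so by Definition~\ref{test-configuration} the pair $(X_1,L)$ cannot be K-polystable.

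\textbf{Step 1: the $\mathbb{C}^*$-action.} The parametrisation $[u:v]\mapsto[u^4:\cdots:v^4]$ gives the diagonal torus $[u:v]\mapsto[u:sv]$, which acts on $\mathbb{P}^4$ with weights $(0,1,2,3,4)$ on $(x,y,z,t,w)$ and preserves $C$. Under this action $3z^2-4yt+xw$ is homogeneous of weight $4$, while $t^2-zw$ has weight $6$. So acting by $s$ sends $Q_\epsilon$ to $Q_{s^{\pm 2}\epsilon}$. Consequently $\mathcal{Q}=\{\epsilon(t^2-zw)+3z^2-4yt+xw=0\}\subset\mathbb{P}^4\times\mathbb{A}^1_\epsilon$ is a flat family that is $\mathbb{C}^*$-equivariant, after reparametrising $\epsilon$ by a weight-$2$ (or weight-$1$, via a base change $\epsilon=\tau^2$) action on the base. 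Blowing up $C\times\mathbb{A}^1\subset\mathcal{Q}$ gives an equivariant smooth family $\mathcal{X}\to\mathbb{A}^1$ with $\mathcal{X}_\epsilon\simeq X_1$ for $\epsilon\neq0$ and $\mathcal{X}_0=X_0$.

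\textbf{Step 2: the polarisation.} $\mathrm{Pic}(X_\epsilon)=\mathbb{Z}H\oplus\mathbb{Z}E$, where $H$ is the pullback of the hyperplane class and $E$ is the exceptional divisor. Both classes extend to $\mathcal{X}$: take the hyperplane pullback and the exceptional divisor $\mathcal{E}$ of the blow-up of $C\times\mathbb{A}^1$. Hence any ample $L\sim_{\mathbb{Q}} aH-bE$ extends to a $\mathbb{C}^*$-linearised relatively ample $\mathcal{L}=a\mathcal{H}-b\mathcal{E}$, after scaling to make it integral. The ample cone is the same on every fibre, since the Mori cones are locally constant in smooth Fano families. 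This yields a test configuration $(\mathcal{X},\mathcal{L})$ for $(X_1,L)$, and it is not a product.

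\textbf{Step 3: vanishing of the Futaki invariant (the main point).} $\mathrm{DF}(\mathcal{X},\mathcal{L})$ equals the classical Futaki invariant of $(X_0,L_0)$ evaluated on the vector field induced by the $\mathbb{C}^*$-action on the central fibre. This $\mathbb{C}^*$ is the diagonal torus inside $\mathrm{Aut}^0(X_0)\simeq\mathrm{PGL}_2(\mathbb{C})$. The Futaki invariant is a character of the Lie algebra of $\mathrm{Aut}^0(X_0,L_0)$, and $\mathfrak{sl}_2$ is perfect, so it vanishes identically on $\mathfrak{sl}_2$. Note that $\mathrm{PGL}_2$ fixes the class $L_0$, since it acts trivially on $\mathrm{Pic}$, which is discrete. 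A bit of care is needed because the linearisation of $\mathcal{L}$ may differ from a $\mathrm{PGL}_2$-compatible one by a constant weight shift. However, $\mathrm{DF}$ is invariant under such shifts, since $b_0a_1-b_1a_0$ is unchanged when $w(k)$ is replaced by $w(k)+ck\,h(k)$. It is also unaffected by the base change $\epsilon=\tau^m$, which merely rescales $\mathrm{DF}$ by $m$. Therefore $\mathrm{DF}(\mathcal{X},\mathcal{L})=0$.

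As a fallback, if one prefers not to invoke the Futaki-character property, one can check the vanishing directly with the Atiyah--Bott expressions~\eqref{equation:ab} on $X_0$. The fixed points of the torus action on $X_0$ are isolated, lying over $[1:0:0:0:0]$ and $[0:0:0:0:1]$ on $C$ and over the fixed points of $Q_0\setminus C$. Using the Weyl symmetry $u\leftrightarrow v$ of $\mathrm{PGL}_2$, which swaps the fixed points and inverts the weights, one then shows that $b_0a_1-b_1a_0=0$ after normalising the linearisation. I expect this bookkeeping to be the main obstacle if done by hand; the Lie-algebraic argument avoids it.
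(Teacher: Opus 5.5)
Your proposal is correct and follows the paper's route. Like the paper, you use the test configuration given by the family $X_\epsilon$ degenerating to $X_0$, identify its Donaldson--Futaki invariant with the Futaki invariant of the induced $\mathbb{C}^*$-action on $X_0$, and conclude from its vanishing. The paper simply cites Sektnan--Tipler for that vanishing, whereas you derive it from the Futaki invariant being a Lie algebra character and $\mathfrak{sl}_2$ being perfect (the same argument the paper uses in Lemma~\ref{lemma:3-13}); you also spell out the equivariance, base change and linearisation details that the paper leaves implicit.
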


\begin{proof}
Consider the special test configuration $f\colon \mathcal{X}\to\mathbb{A}^1_{\epsilon}$ whose fibers are $f^{-1}(\epsilon)=X_{\epsilon}$. Then its Donaldson--Futaki invariant coincides (up to a positive multiple) with the Futaki invariant of the $\mathbb{G}_m$-action on the central fiber $X_0$ induced by the corresponding $\mathbb{C}^\ast$-action on $\mathcal{X}$, see \cite{Donaldson2002,Legendre}. On the other hand, it follows from \cite{SektnanTipler} that the Futaki invariant vanishes identically on $X_0$. Thus, by definition, the pair $(X_1,L)$ is not K-polystable.
\end{proof}

\subsection{Family \textnumero 3.13}
\label{subsection:3-13}

Let $X$ be the complete intersection in~$\mathbb{P}^2_{x_1,x_2,x_3}\times\mathbb{P}^2_{y_1,y_2,y_3}\times\mathbb{P}^2_{z_1,z_2,z_3}$ that is given by
$$
\left\{\aligned
&x_1y_1+x_2y_2+x_3y_3=0,\\
&y_1z_1+y_2z_2+y_3z_3=0,\\
&x_1z_2+x_2z_1+x_2z_3-x_3z_2-2x_3z_3=0.\\
\endaligned
\right.
$$
Then $X$ is a smooth Fano threefold in Family~\textnumero 3.13, and it is the only smooth member of this deformation family that is not K-polystable \cite{Book}.
For every $a\in\mathbb{C}$, let $\phi_a\in\mathrm{Aut}(X)$ be given by
$$
\left[
  \begin{array}{c}
    x_1 \\
    x_2\\
    x_3 \\
  \end{array}
\right]\mapsto
A\left[
  \begin{array}{c}
    x_1 \\
    x_2\\
    x_3 \\
  \end{array}
\right],\ \left[
  \begin{array}{c}
    y_1 \\
    y_2\\
    y_3 \\
  \end{array}
\right]\mapsto
\big(A^{-1}\big)^T\left[
  \begin{array}{c}
    y_1 \\
    y_2\\
    y_3 \\
  \end{array}
\right],
\left[
  \begin{array}{c}
    z_1 \\
    z_2\\
    z_3 \\
  \end{array}
\right]\mapsto
A\left[
  \begin{array}{c}
    z_1 \\
    z_2\\
    z_3 \\
  \end{array}
\right],
\text{ where }
A=\left(
  \begin{array}{ccc}
    1 & a^2 & 2a \\
    0 & 1 & 0 \\
    0 & a~& 1 \\
  \end{array}
\right).
$$
Then the transformations $\phi_a$ generate a subgroup of $\mathrm{Aut}(X)$ that is isomorphic to $\mathbb{G}_{a}$, and one can show that $\mathrm{Aut}^0(X)\simeq\mathbb{G}_{a}$.

\begin{lemma}
\label{lemma:3-13}
Let $L$ be any ample $\mathbb{Q}$-divisor on $X$. Then $(X,L)$ is not K-polystable.
\end{lemma}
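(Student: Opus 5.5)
The plan is to copy the argument of Lemma~\ref{lemma:2-21}: degenerate $X$ equivariantly to a more symmetric member of Family \textnumero 3.13, and show that the induced Futaki invariant on the central fibre vanishes.

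\textbf{Step 1: the one-parameter family.} Write the third equation as a bilinear form $x^T M z=0$. Since $\mathrm{Aut}^0(X)\simeq\mathbb{G}_a$ is unipotent, I expect a torus $\lambda\colon\mathbb{C}^*\to\mathrm{GL}_3$ of the shape $\mathrm{diag}(t^{a_1},t^{a_2},t^{a_3})$ to normalise the group $\{\phi_a\}$. I would pick $\lambda$ so that acting by $(\lambda,\lambda^{-T},\lambda)$ preserves the first two equations and rescales the monomials of the third with different weights. Taking the limit $t\to 0$, after renormalising, gives a flat family $\mathcal{X}\to\mathbb{A}^1$ with the following properties: the fibres $\mathcal{X}_t\cong X$ for $t\neq 0$; the central fibre $X_0$ is the smooth member of Family \textnumero 3.13 with $\mathrm{Aut}^0(X_0)\supseteq\mathrm{PGL}_2(\mathbb{C})$, namely the one cut out by the symmetric form defining a conic (for instance $x_1z_3+x_3z_1-x_2z_2$); and $\mathcal{X}$ carries a $\mathbb{C}^*$-action lifting the one on $\mathbb{A}^1$. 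Concretely, I would choose $\lambda$ as a one-parameter subgroup in the Borel subgroup normalising $\{\phi_a\}$, check that the limit form $M_0$ is nondegenerate and symmetric up to scaling, and check that the limit of the three-dimensional family $\{\phi_a\}\cup\lambda$ sits inside $\mathrm{PGL}_2$.

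\textbf{Step 2: the polarisation.} Since $\rho(X)=3$ and every ample class is the restriction of some $\mathcal{O}(a,b,c)$ from $(\mathbb{P}^2)^3$, every $L$ extends to a $\mathbb{C}^*$-linearised relatively ample $\mathcal{L}$ on $\mathcal{X}$. The resulting test configuration is special, and it is not a product because $X_0\not\cong X$: their automorphism groups differ. As in the proof of Lemma~\ref{lemma:2-21}, $\mathrm{DF}(\mathcal{X},\mathcal{L})$ is a positive multiple of the Futaki invariant of the induced $\mathbb{G}_m$-action on $(X_0,L_0)$ \cite{Donaldson2002,Legendre}.

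\textbf{Step 3: vanishing of the Futaki invariant.} The induced $\mathbb{G}_m$ lies in $\mathrm{Aut}^0(X_0)\supseteq\mathrm{PGL}_2(\mathbb{C})$. The Futaki invariant is a character of $\mathrm{Aut}^0(X_0,L_0)$, and the class $L_0$ is invariant under the connected group. Since $\mathrm{PGL}_2$ is semisimple it has no nontrivial characters, so the Futaki invariant vanishes on every subtorus of $\mathrm{PGL}_2$. Hence it vanishes on the induced $\mathbb{G}_m$, provided that $\mathbb{G}_m$ lands inside $\mathrm{PGL}_2$ and not in some extra central torus. I expect this provision to hold because $\mathrm{Aut}^0(X_0)$ is exactly $\mathrm{PGL}_2$, by \cite{Book}. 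Therefore $\mathrm{DF}=0$ for a non-product test configuration, and $(X,L)$ is not K-polystable.

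\textbf{Main obstacle.} The hard part is Step 1: finding the explicit weights $\lambda$ for which the limit of $M$ is a nondegenerate form with $\mathrm{PGL}_2$-symmetry, rather than a degenerate form that would give a singular central fibre. I would first try weights making $\phi_a$ the unipotent radical of a Borel in $\mathrm{PGL}_2$ acting on $\mathbb{P}^2=\mathbb{P}(\mathrm{Sym}^2)$. I would then test whether the limit is $x_1z_3-x_3z_1$-type or symmetric, and adjust the weights if it comes out degenerate.
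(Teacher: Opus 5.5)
Your proposal is correct and is essentially the paper's argument: degenerate $X$ through a special test configuration to the $\mathrm{PGL}_2(\mathbb{C})$-symmetric member $X_0$, then use that the Futaki character of $X_0$ vanishes because $\mathrm{Aut}^0(X_0)\simeq\mathrm{PGL}_2(\mathbb{C})$ is semisimple. Your ``main obstacle'' is resolved exactly as you guessed. The torus $\mathrm{diag}(t,t^{-1},1)$ acting on $x$ and $z$ (and inversely on $y$) normalises $\{\phi_a\}$, since it conjugates $\phi_a$ to $\phi_{ta}$. It turns the third equation into $x_1z_2+x_2z_1-2x_3z_3+t(x_2z_3-x_3z_2)=0$, which is precisely the paper's family $X_\epsilon$, with smooth central fibre at $\epsilon=0$.
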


\begin{proof}
For $\epsilon\in\mathbb{C}$, let $X_{\epsilon}$ be given~by
$$
\left\{\aligned
&x_1y_1+x_2y_2+x_3y_3=0,\\
&y_1z_1+y_2z_2+y_3z_3=0,\\
&x_1z_2+x_2z_1-2x_3z_3+\epsilon(x_2z_3-x_3z_2)=0.
\endaligned
\right.
$$
Then we have $X_{\epsilon}\simeq X$ for every $\epsilon\ne 0$.
On the other hand, $X_0$ is the~unique smooth Fano threefold in Family \textnumero 3.13 that admits an~effective $\mathrm{PGL}_2(\mathbb{C})$-action \cite{Book}.
In fact, $\mathrm{Aut}(X_0)\simeq\mathrm{PGL}_2(\mathbb{C})\times\mathfrak{S}_3$, where $\mathfrak{S}_3$ is the symmetric group of order $6$.
As in the proof of Lemma~\ref{lemma:2-21}, consider the special test configuration $f\colon \mathcal{X}\to\mathbb{A}^1_{\epsilon}$ whose fibres are $f^{-1}(\epsilon)=X_{\epsilon}$.
Since $\mathrm{Aut}^0(X_0)\simeq\mathrm{PGL}_2(\mathbb{C})$, it follows from \cite{Futaki,Mabuchi,PhongSturm} that the Futaki character of $X_0$ vanishes identically on the ample cone of $X_0$. Thus, arguing as in the proof of Lemma~\ref{lemma:2-21}, we see that $(X,L)$ is not K-polystable.
\end{proof}

Next, we treat Families \textnumero 2.26 and \textnumero 3.16 using a different approach, based on $\mathbb{C}^*$-actions and explicit computations of the corresponding Futaki invariant using the Atiyah--Bott formula. 

\subsection{Family \textnumero 2.26}
\label{subsection:2-26}
Let $[x_0:x_1:x_2:x_3:x_4]$ be homogeneous coordinates on $\mathbb{P}^4$, let
$$
Q=\{x_0x_4+x_1x_3+x_2^2=0\}\subset \PP^4,
$$
and let $C_3$ be the parametrically given twisted cubic curve
$$
[s:t]\mapsto [s^3:s^2t:st^2:-t^3:0].
$$
Then the smooth threefold $X$ concerned in this family
is obtained by $\pi\colon X\to Q$, the blow-up of the smooth quadric threefold $Q$ along the curve $C_3$.
Let $E$ be the~$\pi$-exceptional divisor, let $H=\pi^*(\mathcal{O}_{Q}(1))$, and let $L$ be an ample divisor on $X$. Then
$$
L\sim aH+b(2H-E)
$$
for some positive integers $a$ and $b$.
Fix the $\C^*$-action on $\PP^4$ that acts diagonally on $\PP^4$ with weights
$$
(w_0,w_1,w_2,w_3,w_4)=(2,1,0,-1,-2),
$$
which is given by
$$
[x_0:x_1:x_2:x_3:x_4]\mapsto [\lambda^{2}x_0:\lambda x_1:x_2:\lambda^{-1}x_3:\lambda^{-2}x_4]
$$
for $\lambda\in\mathbb{C}^*$. Then $Q$ is $\C^*$-invariant, so we may also consider this $\mathbb{C}^*$ as a subgroup in $\mathrm{Aut}(Q)$.
Moreover, the twisted cubic curve $C_3$ is also $\C^*$-invariant, so the described $\C^*$-action lifts to $X$,
and we may consider $H$, $E$ and $L$ as $\mathbb{C}^*$-linearized line bundles.

\begin{lemma}
\label{lemma:2-26-DF}
For the product test configuration that is induced by the described $\C^*$-action, the~Donaldson--Futaki invariant of $(X,L)$ is
\begin{equation}
\label{equation:2-26-DF}
-\frac{ab\bigl(6a^4+35a^3b+80a^2b^2+75ab^3+25b^4\bigr)}{4(a+b)(2a^2+10ab+5b^2)}.
\end{equation}
\end{lemma}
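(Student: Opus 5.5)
We will compute $\mathrm{DF}$ with the Atiyah--Bott localisation set up above: find $a_0,a_1$ from Riemann--Roch, and $b_0,b_1$ from formula \eqref{equation:ab}. Because $L$ is ample and $X$ is Fano, $H^i(X,L^k)=0$ for $i>0$. Riemann--Roch gives $a_0=L^3/6$ and $a_1=-K_X\cdot L^2/4$. These follow from the intersection numbers on $X$: $H^3=2$, $H^2E=0$, $HE^2=-\deg C_3=-3$, and $E^3=-\deg N_{C_3/Q}=-(3\cdot 3+2g-2)=-7$, together with $-K_X=3H-E$.

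\textbf{Fixed points.} The next step is to list the $\C^*$-fixed points on $X$. On $\PP^4$ the weights $(2,1,0,-1,-2)$ are distinct, so the fixed points are the five coordinate points. Of these, $[1:0:0:0:0]$, $[0:1:0:0:0]$, $[0:0:0:1:0]$ and $[0:0:0:0:1]$ lie on $Q$, while $[0:0:1:0:0]$ does not. The curve $C_3$ passes through $p_0=[1:0:0:0:0]$ (at $t=0$) and $p_3=[0:0:0:1:0]$ (at $s=0$); these are the two fixed points of $C_3\simeq\PP^1$.

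Over $p_0$ and $p_3$ the exceptional fibre is $\PP(N_{C_3/Q,p})\simeq\PP^1$. The torus acts on this $\PP^1$ with two fixed points, provided the two weights of the normal space are distinct. If they coincide, the whole fibre is fixed and localisation must be done along a fixed curve; I expect this not to happen, but it has to be checked. So $X$ should have $2+2\cdot 2=6$ isolated fixed points. At $p_1=[0:1:0:0:0]$ and $p_4=[0:0:0:0:1]$ the cotangent weights are read off from the affine chart of $Q$: for instance at $p_4$ the chart is $x_0=-(x_1x_3+x_2^2)$, with local coordinates $x_1/x_4,x_2/x_4,x_3/x_4$.

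At $p_0$ (and likewise $p_3$), take local coordinates on $Q$, one tangent to $C_3$ and two normal ones. The cotangent weights at the two fixed points of the exceptional fibre are then $\{\beta_t,\ \beta_1,\ \beta_2-\beta_1\}$ and $\{\beta_t,\ \beta_2,\ \beta_1-\beta_2\}$, where $\beta_t$ is the tangent weight and $\beta_1,\beta_2$ are the normal weights.

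\textbf{Linearisations and sums.} We linearise $H$ using the fibre weight $-w_i$ of $\mathcal{O}(1)$ at each coordinate point; sign conventions will only flip the sign of $b_0,b_1$ consistently. We linearise $E$ as a sub-bundle: $\mathcal{O}(E)$ has weight $0$ at fixed points off $E$. On $E$ the fibre of $\mathcal{O}(-E)$ is the tautological line of the normal bundle, so at the two points over $p_0$ its weight is $\beta_1$ or $\beta_2$ respectively. With $\mu_L=a\mu_H+b(2\mu_H-\mu_E)$, we then evaluate $b_0$ and $b_1$ from \eqref{equation:ab} and form $\mathrm{DF}=(b_0a_1-b_1a_0)/a_0$. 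The result must be independent of the choice of linearisation. As a check, $b_0$ must agree with the equivariant volume, i.e.\ the Duistermaat--Heckman moment.

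\textbf{Main obstacle.} The hard part is the bookkeeping at the four points over $p_0$ and $p_3$. This means getting the normal weights $\beta_1,\beta_2$ of $C_3\subset Q$ right, e.g.\ through the tangent line of $C_3$ and the tangent space of $Q$ at $p_0$, and the correct signs for $\mu_E$. I would first compute the normal weights via $T_Q|_{p_0}/T_{C_3}|_{p_0}$: the tangent direction at $p_0$ is $x_1$, and $T_{p_0}Q$ is cut out by $x_4=0$. As a sanity check, $\mathrm{DF}$ must vanish at $b=0$, since $Q$ is K-polystable with reductive automorphism group. The formula \eqref{equation:2-26-DF} indeed has the factor $b$. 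Finally, we simplify the rational expression to obtain \eqref{equation:2-26-DF}, presumably with computer algebra as the paper does.
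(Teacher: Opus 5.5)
Your route is the paper's own: Riemann--Roch for $a_0,a_1$, the six isolated fixed points, and Atiyah--Bott for $b_0,b_1$. Your $H^3=2$, $H\cdot E^2=-3$, $E^3=-7$ and $-K_X\sim 3H-E$ reproduce the paper's $a_0$ and $a_1$. Your rule $\{\beta_t,\beta_1,\beta_2-\beta_1\}$, $\{\beta_t,\beta_2,\beta_1-\beta_2\}$ gives exactly the paper's weights: $(-1,-2,-1)$ and $(-1,-3,1)$ over $Q_0$ (tangent weight $-1$, normal weights $-2,-3$), and $(1,3,-4)$ and $(1,-1,4)$ over $Q_3$ (tangent weight $1$, normal weights $3,-1$).

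\textbf{The gap: the sign of the $E$-linearisation.} One step fails as written. The tautological line is $\mathcal{O}_X(E)|_E\cong N_{E/X}\cong\mathcal{O}_{\mathbb{P}(N_{C_3/Q})}(-1)$, not $\mathcal{O}_X(-E)$; compare $E^2=-1$ for the blow-up of a point on a surface. Take the linearisation making the canonical section of $\mathcal{O}_X(E)$ invariant, and the convention where $\mu_H=-w_i$ and the $\alpha_j$ are the weights $w_j-w_i$ of local coordinates. Then the fibre of $\mathcal{O}_X(E)$ over the point corresponding to the normal direction of weight $\beta_i$ has weight $+\beta_i$. This is what the paper uses: $\mu_E(P_0)=-2$, $\mu_E(P_0')=-3$, $\mu_E(P_3)=3$, $\mu_E(P_3')=-1$.

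With your sign, $(a+2b)\mu_H-b\mu_E$ is the weight function of a linearisation of $(a+2b)H+bE$, not of $L$. That is an honest linearisation of a different bundle, so all negative powers of $\varepsilon$ still cancel and nothing visibly breaks. But $b_0$ and $b_1$ change: for instance, the coefficient of $ab^3$ in $24b_0$ becomes $104$ instead of $40$. So you would not arrive at \eqref{equation:2-26-DF}.

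Your $b=0$ sanity check cannot detect this, since $\mu_E$ does not enter there. The check that does is the $\varepsilon^0$-term of the localisation: $\sum_P\mu_L(P)^3/\bigl(\alpha_1(P)\alpha_2(P)\alpha_3(P)\bigr)$ must equal $L^3$. In particular $\sum_P\mu_E(P)^3/\bigl(\alpha_1(P)\alpha_2(P)\alpha_3(P)\bigr)$ must equal $E^3=-7$, and your sign gives $+7$.

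For the same reason, your remark that sign conventions only flip $b_0,b_1$ consistently holds only for a simultaneous flip of all $\mu$'s and $\alpha$'s. That is the inverse action, which flips the sign of DF. The relative signs of $\mu_H$, $\mu_E$ and the $\alpha_j$ are forced by the formula.

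With the corrected sign one gets $b_0=b^2(9a^2+20ab+10b^2)/12$ and $b_1=b(3a^2+13ab+10b^2)/4$, and the stated formula follows.
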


\begin{proof}
Fix sufficiently large integer $k\gg 0$. Let $h(k)=\dim(H^0(X,L^k))$, and let $w(k)$ be the total weight of our $\C^*$-action on $H^0(X,L^k)$.
Then
\begin{align*}
h(k)&=a_0k^3+a_1k^2+O(k),\\
w(k)&=b_0k^4+b_1k^3+O(k^2),
\end{align*}
and the Donaldson--Futaki invariant is $\frac{b_0a_1-b_1a_0}{a_0}$. Note that up to a non-zero scalar multiple, the Donaldson--Futaki invariant coincides with the usual Futaki invariant of the vector field induced by the $\mathbb{C}^*$-action, which can be computed using \cite[Corollary~2.13]{Maschler}, \cite[\S~3]{Legendre} or \cite[Theorem~1.4]{Tian} similar to what is done in \cite[Example 4.9]{Stoppa2024}. However, we can compute it directly, as by the asymptotic Riemann--Roch, we have
$$
a_0=\frac{L^3}{6}=\frac{(a+b)(2a^2+10ab+5b^2)}{6}
\quad
\text{and}
\quad
a_1=\frac{(-K_X\cdot L^2)}{4}=\frac{3a^2+9ab+5b^2}{2}.
$$
Thus, to obtain the required formula \eqref{equation:2-26-DF}, we  use the formulas \eqref{equation:ab} to compute $b_0$ and $b_1$ explicitly as functions of $a$ and $b$.

First, we describe the set of $\mathbb{C}^*$-fixed points in $X$, which we denote by
$X^{\C^*}$. Our $\mathbb{C}^*$-action fixes exactly $4$ points in $Q$. These points are
$$
Q_0=[1:0:0:0:0],\,
Q_1=[0:1:0:0:0],\,
Q_3=[0:0:0:1:0],\,
Q_4=[0:0:0:0:1].
$$
The twisted cubic $C_3$ contains $Q_0$ and $Q_3$, and it does not contain $Q_1$ and $Q_4$.
We denote by $P_1$ and $P_4$ the preimages on $X$ of the points $Q_1$ and $Q_4$, respectively.
Let $\ell_0$ and $\ell_3$ be the fibres of the natural projection $E\to C_3$ over the points $Q_0$ and $Q_3$, respectively.
Then $\ell_0$ contains two $\mathbb{C}^*$-fixed points, which we denote by $P_0$ and $P_0^\prime$.
Similarly, the curve $\ell_3$ contains two $\mathbb{C}^*$-fixed points, which we denote by $P_3$ and $P_3^\prime$.
Then $X^{\C^*}=\{P_0,P_0^\prime,P_1,P_3,P_3^\prime,P_4\}$.

For every $P\in X^{\C^*}$, the group $\mathbb{C}^*$ faithfully acts on the cotangent space $T_P^*(X)$.
Possibly after swapping $P_0\leftrightarrow P_0^\prime$ and $P_3\leftrightarrow P_3^\prime$,
the weights of the $\mathbb{C}^*$-actions are computed as follows:
\begin{align*}
\big(\alpha_1(P_0),\alpha_2(P_0),\alpha_3(P_0)\big)&=(-1,-2,-1),&\quad
\big(\alpha_1(P_0^\prime),\alpha_2(P_0^\prime),\alpha_3(P_0^\prime)\big)&=(-1,-3,1),\\
\big(\alpha_1(P_1),\alpha_2(P_1),\alpha_3(P_1)\big)&=(1,-1,-3),&\quad
\big(\alpha_1(P_3),\alpha_2(P_3),\alpha_3(P_3)\big)&=(1,3,-4),\\
\big(\alpha_1(P_3^\prime),\alpha_2(P_3^\prime),\alpha_3(P_3^\prime)\big)&=(1,-1,4),&\quad
\big(\alpha_1(P_4),\alpha_2(P_4),\alpha_3(P_4)\big)&=(3,2,1).
\end{align*}
Similarly, we compute the fibre weights for the line bundles $H$ and $E$ as follows:
$$
\mu_H(P_0)=\mu_H(P_0^\prime)=-2,\quad \mu_H(P_1)=-1,\quad \mu_H(P_3)=\mu_H(P_3^\prime)=1,\quad \mu_H(P_4)=2,
$$
$$
\mu_E(P_0)=-2, \quad \mu_E(P_0^\prime)=-3,\quad \mu_E(P_1)=0,\quad \mu_E(P_3)=3,\quad  \mu_E(P_3^\prime)=-1, \quad \mu_E(P_4)=0.
$$
Thus, the fibre weight for the line bundle $L$ is
\begin{align*}
\mu_L(P_0)&=-2a-2b,\quad
\mu_L(P_0^\prime)=-2a-b,\quad
\mu_L(P_1)=-a-2b,\\
\mu_L(P_3)&=a-b,\quad
\mu_L(P_3^\prime)=a+3b,\quad
\mu_L(P_4)=2(a+2b).
\end{align*}
Now, using the formulas for $b_0$ and $b_1$ presented in \eqref{equation:ab}, we compute
$$
b_0=\frac{b^2(9a^2+20ab+10b^2)}{12}
\quad
\text{and}
\quad
b_1=\frac{b(3a^2+13ab+10b^2)}{4}.
$$
Finally, using these formulas and formulas for $a_0$ and $a_1$ obtained earlier, we get \eqref{equation:2-26-DF}.
\end{proof}

Therefore, the polarised pair $(X,L)$ in this family is never K-semistable.

\subsection{Family \textnumero 3.16}
\label{subsection:3-16}

Let $O=[1:0:0:0]\in\mathbb{P}^3$, and let $C_3$ be the twisted cubic curve in $\mathbb{P}^3$ that is parametrically given by
$$
[s:t]\mapsto [s^3:s^2t:st^2:t^3].
$$
Then $O\in C_3$.
Let~$\phi\colon V_7\to\mathbb{P}^3$ be the~blow-up of the point $O$, let $C$ be the~proper transform on $V_7$ of the~curve $C_3$,
and let $\pi\colon X\to V_7$ be the blow-up along $C$.
Then $X$ is the smooth Fano threefold in Family \textnumero 3.16,
$\mathrm{Aut}^0(X)\simeq\mathbb{C}_+\rtimes\mathbb{C}^\ast$,
and we have the following commutative diagram:
$$
\xymatrix{
\mathbb{P}^2& &W\ar@{->}[ll]_{\mathrm{pr}_1}\ar@{->}[rr]^{\mathrm{pr}_2} & &\mathbb{P}^2 \\
& & X\ar@{->}[drr]^{\varphi}\ar@{->}[dll]_{\pi}\ar@{->}[u]_{\psi} & & \\
V_7\ar@{->}[rr]_{\phi}\ar@{->}[uu]_{p_1}& &\mathbb{P}^3  & &\widetilde{\mathbb{P}}^3\ar@{->}[ll]^{\varpi}\ar@{->}[uu]_{p_2}}
$$
where $W$ is a~smooth divisor of degree $(1,1)$ in $\mathbb{P}^2\times\mathbb{P}^2$,
the morphism $\psi$ is a blow-up of a smooth curve of degree $(2,1)$,
$\varpi$ is the~blow-up of the curve $C_3$,
$\varphi$ is the~blow-up of the~fibre of $\varpi$ over $O$,
and $\mathrm{pr}_1$, $\mathrm{pr}_2$, $p_1$, $p_2$ are $\mathbb{P}^1$-bundles.

Let $E$ be the~$\varphi$-exceptional surface,
let $F$ be the~$\pi$-exceptional surface,
let $G$ be the~$\psi$-exceptional surface,
and let $H$ be the strict transform on $X$ of a general plane in $\mathbb{P}^3$.
Then $E\simeq\mathbb{F}_1$, $F\simeq\mathbb{P}^1\times\mathbb{P}^1$, $G\simeq\mathbb{F}_2$, $G\sim 2H-2E-F$,
the nef cone of $X$ is generated by $H$, $H-E$ and $2H-E-F$. Note also that
$-K_X\sim 4H-2E-F$.

Let $L$ be an ample divisor on $X$. Then there are positive integers $a$, $b$, $c$ such that  
$$
L\sim_{\mathbb{Q}} aH+b(H-E)+c(2H-E-F).
$$
As in the previous case, we fix the $\mathbb{C}^*$-action on $\mathbb{P}^3$ that is given by
$$
[x_0:x_1:x_2:x_3]\mapsto [x_0:\lambda x_1:\lambda^2x_2:\lambda^3x_3],
$$
where $\lambda\in\mathbb{C}^*$. Then $O$ is fixed by this $\mathbb{C}^*$-action, and $\mathscr{C}$ is $\mathbb{C}^*$-invariant,
so the $\mathbb{C}^*$-action lifts to the threefolds $V_7$, $\widetilde{\mathbb{P}}^3$, $X$,
and we may consider $H$, $E$ and $L$ as $\mathbb{C}^*$-linearized line bundles.

\begin{lemma}
\label{lemma:3-16-DF}
For the product test configuration induced by the described $\C^*$-action on $X$, the~Donaldson--Futaki invariant of $(X,L)$ is
$$
-\frac{f(a,b,c)}{4(a^{3} + 3 a^{2} b + 6 a^{2} c + 3 a b^{2} + 12 a b c + 3 a c^{2} + 3 b^{2} c + 3 b c^{2})},
$$
where
\begin{multline*}
f(a,b,c)=3a^{4} b^{2} + 6 a^{4} b c + 6 a^{4} c^{2} + 4 a^{3} b^{3} + 25 a^{3} b^{2} c + 67 a^{3} b c^{2} + 48 a^{3} c^{3}+\\
+ 24 a^{2} b^{3} c + 129 a^{2} b^{2} c^{2} + 204 a^{2} b c^{3} + 90 a^{2} c^{4}+ 12 a b^{4} c + 102 a b^{3} c^{2} + \\
+243 a b^{2} c^{3} + 201 a b c^{4} + 36 a c^{5}+ 12 b^{4} c^{2} + 54 b^{3} c^{3} + 78 b^{2} c^{4} + 36 b c^{5}.
\end{multline*}
\end{lemma}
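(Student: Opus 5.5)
I will follow exactly the scheme of the proof of Lemma~\ref{lemma:2-26-DF}. Writing $h(k)=a_0k^3+a_1k^2+O(k)$ and $w(k)=b_0k^4+b_1k^3+O(k^2)$ for $L^k$, the Donaldson--Futaki invariant is $(b_0a_1-b_1a_0)/a_0$. I will obtain $a_0$ and $a_1$ from asymptotic Riemann--Roch, $a_0=L^3/6$ and $a_1=(-K_X\cdot L^2)/4$, and $b_0$, $b_1$ from the Atiyah--Bott expressions \eqref{equation:ab}.

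\textbf{Intersection numbers.} I will compute the intersection ring of $X$ on the basis $H,E,F$, using the blow-up structure $\pi\colon X\to V_7$ along $C$ and $\phi\colon V_7\to\mathbb{P}^3$. The inputs are:
\begin{itemize}
\item on $V_7$: $H^3=1$, $E^3=1$, $H\cdot E=0$ as cycles meeting the general $H$;
\item $C\cdot H=3$ and $C\cdot E=1$, since $C_3$ is smooth at $O$;
\item $-K_{V_7}\cdot C=4\cdot 3-2\cdot 1=10$, so $\deg N_C=8$ and $F^3=-8$;
\item $F^2\cdot H=-3$, $F^2\cdot E=-1$, and $F\cdot H^2=F\cdot E^2=F\cdot H\cdot E=0$.
\end{itemize}
Expanding $L=(a+b+2c)H-(b+c)E-cF$ cubed should reproduce the cubic in the denominator, up to the factor that makes $a_0=L^3/6$. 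Expanding $(4H-2E-F)\cdot L^2$ gives $a_1$. As consistency checks I will use the relation $G\sim 2H-2E-F$ and the fact that $G\simeq\mathbb{F}_2$.

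\textbf{Fixed points and weights.} On $\mathbb{P}^3$ the torus fixes the four coordinate points $O_0=O, O_1, O_2, O_3$. The curve $C_3$ contains $O_0$ and $O_3$; the cotangent weights at $O_i$ are the differences $j-i$.
\begin{itemize}
\item Over $O_1$ and $O_2$ we get single fixed points with unchanged weights.
\item Over $O_0$, blowing up $O$ gives $E\simeq\mathbb{P}^2$ with three fixed points, corresponding to the weights $1,2,3$. The strict transform $C$ meets $E$ at the point of the tangent direction (weight $1$). Blowing up $C$ replaces that point by two fixed points on the $\pi$-fibre.
\item Over $O_3$ there are two fixed points on the fibre of $F$.
\end{itemize}
In total this gives $2+2+2+2=8$ isolated fixed points. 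I first need to check that all weights are nonzero, so that the fixed points are isolated. At each fixed point I will compute the three cotangent weights in local blow-up coordinates. I will also compute the fibre weights $\mu_H,\mu_E,\mu_F$, normalised by the linearisation on $\mathcal{O}(1)$, via local equations of $E$ and $F$. Then $\mu_L=(a+b+2c)\mu_H-(b+c)\mu_E-c\mu_F$.

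\textbf{Assembly and main obstacle.} Substituting into \eqref{equation:ab} gives $b_0$ and $b_1$ as polynomials of degrees $4$ and $3$ in $a,b,c$. Forming $(b_0a_1-b_1a_0)/a_0$ and simplifying should yield $-f/(4\cdot\text{cubic})$, which is a symbolic computation. The main obstacle is bookkeeping of the weights at the fixed points lying over $O_0$. There the two successive blow-ups interact: $C$ passes through the point of $E$ in the tangent direction of $C_3$, so local coordinates must be chosen carefully to keep signs consistent with the linearisation.

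I will verify the data in two ways:
\begin{itemize}
\item the identity $\sum_P 1/(\alpha_1\alpha_2\alpha_3)\cdot\mu_L^3/6=L^3/6$, i.e. the degree-zero term of Atiyah--Bott must reproduce $a_0$;
\item the analogous check of the $k^2$ term against $a_1$.
\end{itemize}
These checks pin down any sign errors before computing $b_0$ and $b_1$.
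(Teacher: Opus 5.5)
Your route is the paper's own: Riemann--Roch for $a_0,a_1$, and localisation at the eight fixed points for $b_0,b_1$. Your inputs are right. The values $H^3=E^3=1$, $F^3=-8$, $F^2\cdot H=-3$, $F^2\cdot E=-1$ give exactly the displayed cubic for $L^3$ and $-K_X\cdot L^2=2(2a^2+4ab+5ac+b^2+4bc+c^2)$, and your description of the fixed points agrees with the paper's table.

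The step that fails is the last one, ``simplifying should yield $-f/(4\cdot\text{cubic})$'': carried out correctly, the computation does not give $f$. You can see this before expanding anything.
\begin{itemize}
\item On the face $a=0$ of the nef cone, $L$ is pulled back along the equivariant contraction $\psi\colon X\to W$. On the face $b=0$, it is pulled back along $\varphi\colon X\to\widetilde{\mathbb{P}}^3$.
\item Pull-back identifies $H^0(X,kL)$ equivariantly with the sections downstairs. The $\mathbb{C}^*$ acts through $\mathrm{PGL}_3$ on $W$, resp.\ through the $\mathrm{PGL}_2$ preserving $C_3$ on $\widetilde{\mathbb{P}}^3$. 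Neither group has nontrivial characters, so the Futaki invariant vanishes on both faces.
\item For a linearisation linear in $(a,b,c)$, the quantities $a_0,a_1,b_0,b_1$ are polynomials in $(a,b,c)$. Hence the numerator must be divisible by $ab$. But $f(0,1,1)=f(1,0,1)=180$.
\end{itemize}
Your proposed checks against $a_0$ and $a_1$ cannot detect this, because the discrepancy sits in $b_0,b_1$.

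Explicitly, the table uses pull-back weights. Put $p=\alpha_1\alpha_2\alpha_3$ and $s=\alpha_1+\alpha_2+\alpha_3$. The expansion compatible with Riemann--Roch is then $a_0=-\sum\mu^3/(6p)$, $a_1=\sum\mu^2s/(4p)$, $b_0=-\sum\mu^4/(24p)$, $b_1=\sum\mu^3s/(12p)$; that is, \eqref{equation:ab} with every $\alpha_j$ replaced by $-\alpha_j$.

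Take $(a,b,c)=(1,0,1)$. The values $\mu_L=3,6,7,6,3,2,2,3$ at $P_1,P_2,P_3,P_3',P_4,P_4',P_5,P_6$ give $\sum\mu^4/p=-180$ and $\sum\mu^3s/p=216$. Hence $b_0a_1-b_1a_0=\frac{180}{24}\cdot 4-\frac{216}{12}\cdot\frac{10}{6}=0$, so $\mathrm{DF}=0$, whereas the statement predicts $-\frac92$.

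In general one obtains
$$
\mathrm{DF}=-\frac{ab\,h(a,b,c)}{4(a^{3}+3a^{2}b+6a^{2}c+3ab^{2}+12abc+3ac^{2}+3b^{2}c+3bc^{2})},
$$
where
$$
h=3a^3b+3a^3c+4a^2b^2+13a^2bc+19a^2c^2+6ab^2c+21abc^2+15ac^3+6b^2c^2+9bc^3+3c^4 .
$$
This coincides with the stated formula only for $c=0$, where both numerators reduce to the $V_7$ part $a^3b^2(3a+4b)$. At the anticanonical class $a=b=c=1$ it gives $-\frac34$ rather than $-\frac{345}{34}$.

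Since $h$ has positive coefficients, this still gives $\mathrm{DF}<0$ for every ample $L$, so the K-instability conclusion survives. But your plan, executed correctly, proves this corrected formula and not the one in the statement.
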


\begin{proof}
The proof is very similar to  the proof of Lemma~\ref{lemma:2-26-DF}. For convenience of the reader, we present some details.
The Donaldson--Futaki invariant of $(X,L)$ is $\frac{b_0a_1-b_1a_0}{a_0}$, where
$$
a_0=\frac{L^3}{6}=\frac{a^3+3a^2b+6a^2c+3ab^2+12abc+3ac^2+3b^2c+3bc^2}{6},
$$
$$
a_1=\frac{(-K_X\cdot L^2)}{4}=\frac{2(2a^2+4ab+5ac+b^2+4bc+c^2)}{4},
$$
and $b_0$ and $b_1$ can be computed using  \eqref{equation:ab} as follows.

Let $X^{\C^*}$ be the set of $\mathbb{C}^*$-fixed points in $X$.
Then $X^{\C^*}$ consists of eight points.
To describe them, observe that our $\mathbb{C}^*$-action fixes exactly $4$ points in $\mathbb{P}^3$. These points are
$$
O=[1:0:0:0],\,
Q_1=[0:1:0:0],\,
Q_2=[0:0:1:0],\,
Q_3=[0:0:0:1].
$$
The twisted cubic $C_3$ contains $O$ and $Q_3$, and it does not contain $Q_1$ and $Q_2$.
We denote by $\bar{Q}_1$, $\bar{Q}_2$, $\bar{Q}_3$ the preimages in $V_7$ of the points $Q_1$, $Q_2$, $Q_3$, respectively.
Set $\overline{E}=\pi(E)$. Then $\overline{E}\simeq\mathbb{P}^2$ is $\phi$-exceptional divisor,
and our $\C^*$-action acts on $\overline{E}$ with weights $(1,2,3)$, so it fixes three points,
which we denote by $\bar{Q}_4$, $\bar{Q}_5$, $\bar{Q}_6$.
Note that one of these points is contained in $C$. Without loss of generality, we may assume that $\bar{Q}_4\in C$.
Thus, $C$ contains $\bar{Q}_3$ and $\bar{Q}_4$, and it does not contain  $\bar{Q}_1$, $\bar{Q}_2$, $\bar{Q}_5$, $\bar{Q}_6$.

We denote by $P_1$, $P_2$, $P_5$ and $P_6$ the preimages on $X$ of the points $\bar{Q}_1$, $\bar{Q}_2$, $\bar{Q}_5$, $\bar{Q}_6$, respectively.
Let $\ell_3$ and $\ell_4$ be the fibres of the natural projection $F\to C$ over the points $\bar{Q}_3$ and $\bar{Q}_4$, respectively.
Then $\ell_3$ contains two $\mathbb{C}^*$-fixed points, which we denote by $P_3$ and $P_3^\prime$.
Similarly, the curve $\ell_4$ contains two $\mathbb{C}^*$-fixed points, which we denote by $P_4$ and $P_4^\prime$.
Then
$$
X^{\C^*}=\big\{P_1,P_2,P_3,P_3^\prime,P_4,P_4^\prime,P_5,P_6\big\}.
$$
By construction, $P_1$ and $P_2$ are not contained in $E\cup F$, while  $P_3$ and $P_3^\prime$ are contained in $F$,
but $P_3$ and $P_3^\prime$ are not contained in $E$.
Similarly, points $P_4$ and $P_4^\prime$ are contained in $\ell_4=E\cap F$, while points $P_5$ and $P_6$ are contained in $E$,
but they are not contained in $F$.

For every $P\in X^{\C^*}$, the group $\mathbb{C}^*$ faithfully acts on the cotangent space $T_P^*(X)$,
and we denote by $\alpha_1(P)$, $\alpha_2(P)$, $\alpha_3(P)$ the weights of this action.
Possibly after swapping $P_3\leftrightarrow P_3^\prime$, $P_4\leftrightarrow P_4^\prime$, $P_5\leftrightarrow P_6$, 
the weights of these eight $\mathbb{C}^*$-actions are computed as follows:
\begin{align*}
\big(\alpha_1(P_1),\alpha_2(P_1),\alpha_3(P_1)\big)&=(-1,1,2),&\quad
\big(\alpha_1(P_2),\alpha_2(P_2),\alpha_3(P_2)\big)&=(-2,-1,1),&\quad\\
\big(\alpha_1(P_3),\alpha_2(P_3),\alpha_3(P_3)\big)&=(-2,-1,-1),&\quad
\big(\alpha_1(P_3^\prime),\alpha_2(P_3^\prime),\alpha_3(P_3^\prime)\big)&=(1,-3,-1),\\
\big(\alpha_1(P_4),\alpha_2(P_4),\alpha_3(P_4)\big)&=(-1,2,1),&\quad
\big(\alpha_1(P_4^\prime),\alpha_2(P_4^\prime),\alpha_3(P_4^\prime)\big)&=(1,1,1),\\
\big(\alpha_1(P_5),\alpha_2(P_5),\alpha_3(P_5)\big)&=(-1,2,1),&\quad
\big(\alpha_1(P_6),\alpha_2(P_6),\alpha_3(P_6)\big)&=(-2,-1,3),&\quad
\end{align*}
Similarly, we compute the fibre weights for  $H$, $E$ and $F$ as follows:
\begin{multline*}
\quad \quad \quad \quad \quad \mu_H(P_1)=1, \mu_H(P_2)=2, \mu_H(P_3)=3, \mu_H(P_3^\prime)=3, \\
\mu_H(P_4)=0, \mu_H(P_4^\prime)=0, \mu_H(P_5)=0, \mu_H(P_6)=0,\quad \\
\mu_E(P_1)=0, \mu_E(P_2)=0, \mu_E(P_3)=0, \mu_E(P_3^\prime)=0, \\
\mu_E(P_4)=-1, \mu_E(P_4^\prime)=-1, \mu_E(P_5)=-2, \mu_E(P_6)=-3, \\
\mu_F(P_1)=0, \mu_F(P_2)=0, \mu_F(P_3)=2, \mu_F(P_3^\prime)=3, \\
\mu_F(P_4)=-2, \mu_F(P_4^\prime)=-1,  \mu_F(P_5)=0, \mu_F(P_6)=0.\quad \quad \quad \quad \quad
\end{multline*}
Thus, the fibre weight for the line bundle $L$ is
\begin{multline*}
\mu_L(P_1)=a+b+2c,\quad
\mu_L(P_2)=2a+2b+4c,\quad
\mu_L(P_3)=3a+3b+4c,\quad
\mu_L(P_3^\prime)=3a+3b+3c,\quad\\
\mu_L(P_4)=b+3c,\quad
\mu_L(P_4^\prime)=b+2c,\quad
\mu_L(P_5)=2b+2c,\quad
\mu_L(P_6)=3b+3c,\quad
\end{multline*}

Now, using \eqref{equation:ab}, we compute $b_0$ and $b_1$ in $\frac{b_0a_1-b_1a_0}{a_0}$, which results
in the required formula for the Donaldson--Futaki invariant.
\end{proof}

Hence, the pair $(X,L)$ in this family is K-unstable for any choice of ample divisor $L$.

\section{Destabilisation via the $\beta$-invariant}
\label{section:beta}
In this section, we apply the $\beta$-invariant method mentioned in the proof of the Main Theorem.
In each case, we identify a divisor $F$ whose geometry reflects a natural extremal contraction and show that $\beta_L(F) < 0$.
Before we proceed with each case, we introduce the necessary concepts and develop some tools to simplify calculations.

Let $X$ be a normal projective variety of dimension~$n$ that has at most Kawamata log terminal singularities, and let $L$ be a nef and big $\mathbb{Q}$-divisor on $X$. Set
$$
\mu=\frac{(-K_X)\cdot L^{n-1}}{L^n}.
$$
Then, following \cite{DervanLegendre}, for every prime divisor $\mathscr{F}$ over $X$, we let
\begin{equation}
\label{equation:beta}\tag{$\bigstar$}
\beta_L(\mathscr{F})=A_X(\mathscr{F})+\frac{n\mu}{L^n}\int\limits_{0}^{\infty}\mathrm{vol}\big(L-u\mathscr{F}\big)du+
\frac{1}{L^n}\int\limits_{0}^{\infty}\frac{\partial \mathrm{vol}\big(L+tK_X-u\mathscr{F}\big)}{\partial t}\Big\vert_{t=0}du,
\end{equation}
where $A_X(\mathscr{F})$ denotes the log discrepancy with respect to $\mathscr{F}$.
Note that $\beta_L(\mathscr{F})$ is invariant up to rescaling $L$ by a positive rational number. Moreover, if $X$ is a Fano variety and $L=-K_X$, then $\beta_L(\mathscr{F})$ is the usual $\beta$-invariant \cite{DervanLegendre,Fujita2019}. Furthermore, \cite[Theorem~1.1]{DervanLegendre} gives

\begin{theorem}
\label{theorem:DervanLegendre}
Suppose that $L$ is ample. If $\beta_L(\mathscr{F})<0$ for some prime divisor $\mathscr{F}$ over $X$, then $(X,L)$ is K-unstable.
\end{theorem}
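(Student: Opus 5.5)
The plan is to obtain the theorem as a consequence of the characterisation of K-semistability through filtrations and non-Archimedean functionals, in the spirit of Fujita and Li's valuative criterion, but for an arbitrary polarisation. Fix a prime divisor $\mathscr{F}$ over $X$, realised on a birational model $Y\to X$. The divisor gives a filtration of the section ring $R(X,L)=\bigoplus_k H^0(X,kL)$ by
$$
\mathcal{F}^\lambda H^0(X,kL)=\{s : \operatorname{ord}_{\mathscr{F}}(s)\geqslant\lambda\}.
$$
The first step is to form the associated family of test configurations. For each $m$ we take the finitely generated approximation: generate the filtration in degree $m$, take the Rees algebra, and let $\operatorname{Proj}$ of it give a test configuration $(\mathcal{X}_m,\mathcal{L}_m)$. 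This is the standard construction of Witt Nyström and Székelyhidi, and it yields a sequence of test configurations that approximates the filtration.

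The second step is to compute the limit of the normalised Donaldson--Futaki invariants. Using the non-Archimedean formulation of Boucksom--Hisamoto--Jonsson, one writes
$$
\mathrm{DF}=\mathrm{Ent}^{\mathrm{NA}}+\mu\, E^{\mathrm{NA}}_{\text{weighted}}+\text{(Ricci energy term)}.
$$
For the filtration induced by $\mathscr{F}$, these terms are identified with the three summands of \eqref{equation:beta}:
\begin{itemize}
\item the entropy term gives $A_X(\mathscr{F})$, and it only decreases after normalisation of the central fibre;
\item the Monge--Ampère energy term gives $\frac{n\mu}{L^n}\int_0^\infty \mathrm{vol}(L-u\mathscr{F})\,du$;
\item the $E^{K_X}$ term is the derivative in $t$ of the energy for $L+tK_X$, which gives the last integral.
\end{itemize}
Here one uses that volumes of the graded pieces converge to $\mathrm{vol}(L-u\mathscr{F})$, by Okounkov bodies and the Boucksom--Chen concave transform. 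The upshot is that $\lim_m \mathrm{DF}(\mathcal{X}_m,\mathcal{L}_m)\leqslant c\,\beta_L(\mathscr{F})$ for some constant $c>0$.

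The third step concludes. If $\beta_L(\mathscr{F})<0$, then for $m\gg0$ the test configuration $(\mathcal{X}_m,\mathcal{L}_m)$ has $\mathrm{DF}<0$, so $(X,L)$ is not K-semistable, and hence it is K-unstable by Definition~\ref{test-configuration}.

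The main obstacle is the convergence of the $K_X$-twisted energy term together with the entropy inequality in the non-Fano setting. There, $L$ and $-K_X$ are not proportional, so one cannot use the Fano identity $\mathrm{DF}=\mathrm{Ding}+\dots$ and must control $\mathrm{Ent}^{\mathrm{NA}}$ directly. I would handle this by passing to a common resolution $\mathcal{Y}$ of $\mathcal{X}_m$ and $X\times\mathbb{A}^1$. On $\mathcal{Y}$ I would bound the log discrepancy of the central fibre components by $A_X(\mathscr{F})$ times the weight, and then use continuity of the volume function in $(t,u)$ to interchange the limit in $m$ with the differentiation in $t$. This is precisely the route of \cite[Theorem~1.1]{DervanLegendre}, which the paper invokes as the statement.
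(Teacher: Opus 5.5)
\textbf{There is a genuine gap in your second step, at the entropy term.} If the filtration of $R(X,L)$ by $\operatorname{ord}_{\mathscr{F}}$ is not finitely generated, the normalised central fibre of your approximant $(\mathcal{X}_m,\mathcal{L}_m)$ can have several components $E$. Their valuations $v_E=b_E^{-1}r(\operatorname{ord}_E)$ need not be multiples of $\operatorname{ord}_{\mathscr{F}}$. The entropy term is the weighted average $\frac{1}{L^n}\sum_E b_E\,A_X(v_E)\,(\mathcal{L}_m|_E)^n$ over all of them, and nothing bounds $A_X(v_E)$ by $A_X(\mathscr{F})$ times a weight. So your proposed bound on a common resolution $\mathcal{Y}$ simply asserts what has to be proved.

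A limiting argument does not rescue this. $A_X$ is only lower semicontinuous, so even if the Monge--Amp\`ere measures of the approximants converge weakly to the Dirac mass at $\operatorname{ord}_{\mathscr{F}}$, you only get $\liminf_m$ of the entropy $\geqslant A_X(\mathscr{F})$. That is the wrong direction for $\lim_m\mathrm{DF}(\mathcal{X}_m,\mathcal{L}_m)\leqslant c\,\beta_L(\mathscr{F})$. This is why Fujita and Li use the Ding invariant in the Fano case: its log canonical threshold term is an infimum over valuations, so it can be bounded by testing on $\operatorname{ord}_{\mathscr{F}}$. The Mabuchi entropy is an average and admits no such test. The fact that $\mathrm{DF}$ drops under normalisation does not help here. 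The energy terms, which you single out as the main obstacle, are the harmless part. As written, your sketch does not prove the statement for an arbitrary prime divisor over $X$.

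\textbf{What makes the cited result work is finite generation.} Dervan--Legendre's valuative criterion is formulated for dreamy $\mathscr{F}$, i.e.\ those for which $\bigoplus_{k,\lambda}H^0(kL-\lambda\mathscr{F})$ is finitely generated. In that case no limit is needed. For $m$ divisible enough, your $\mathcal{X}_m$ is the $\operatorname{Proj}$ of the full Rees algebra, which is normal. Its central fibre $\operatorname{Proj}\operatorname{gr}_{\mathscr{F}}R(X,L)$ is integral, because the associated graded ring of a valuation is a domain. The only central valuation is the Gauss extension of $\operatorname{ord}_{\mathscr{F}}$, with $b=1$. Hence the entropy term is exactly $A_X(\mathscr{F})$, and $\mathrm{DF}$ is a positive multiple of $\beta_L(\mathscr{F})$. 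Dervan--Legendre obtain this identity by a direct intersection computation on $\mathcal{X}$, not by approximation, so attributing your approximation route to them is also inaccurate.

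This restricted statement is all the paper needs. Every $\mathscr{F}$ to which Theorem~\ref{theorem:DervanLegendre} is applied is a prime divisor on a smooth Fano threefold, hence on a Mori dream space. So the relevant filtrations are finitely generated, including those of $R(Y,H)=R(X,f^*H)$ arising in Proposition~\ref{proposition:Paolo-2}.
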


The following result is fruitful in simplifying the computation of $\beta_L(\mathscr{F})$ in some cases.

\begin{proposition}
\label{proposition:Paolo-1}
Suppose that $X$ is a Fano variety, and $L=-K_X+bF$ for a $\mathbb Q$-Cartier prime divisor $F$ on $X$, where $b\in \mathbb Q$ such that $L$ is nef and big. Set
$$
\varphi(b)=\frac{-(L+nbF)\cdot L^{n-1}}{(L^n)^2}.
$$
Then
$$
\beta_L(F)=1+b+\varphi(b)\cdot \int\limits_{-b}^{\infty}{\rm vol}(-K_X-uF)du.
$$
\end{proposition}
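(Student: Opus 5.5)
The plan is to compute the three terms of $(\bigstar)$ directly for $\mathscr{F}=F$. Every volume will be expressed through the single function
$$
V(s)=\mathrm{vol}\big(-K_X-sF\big)
$$
and the single integral
$$
I=\int_{-b}^{\infty}V(s)\,ds .
$$
Since $F$ is a prime divisor on $X$ itself, $A_X(F)=1$. Also, $V$ is continuous and vanishes for $s\gg0$, so all integrals below are over compact ranges.

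\emph{First two terms.} We have $L-uF=-K_X-(u-b)F$. Substituting $s=u-b$ gives
$$
\int_0^\infty \mathrm{vol}(L-uF)\,du=I.
$$
So the second term of $(\bigstar)$ equals $\frac{n\mu}{L^n}\,I$.

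\emph{Third term.} For $0\le t<1$ write
$$
L+tK_X-uF=(1-t)\Big(-K_X-\tfrac{u-b}{1-t}F\Big),
$$
so that
$$
\mathrm{vol}(L+tK_X-uF)=(1-t)^nV\big(\tfrac{u-b}{1-t}\big).
$$
Integrating in $u$ and substituting $s=\frac{u-b}{1-t}$ gives
$$
\Phi(t):=\int_0^\infty \mathrm{vol}(L+tK_X-uF)\,du=(1-t)^{n+1}\int_{-b/(1-t)}^{\infty}V(s)\,ds .
$$
Differentiating at $t=0$, the lower limit has derivative $-b$. Moreover $V(-b)=\mathrm{vol}(L)=L^n$ because $L$ is nef and big. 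Hence
$$
\Phi'(0)=-(n+1)I+bL^n .
$$

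\emph{The main point to justify} is that $\Phi'(0)$ equals the integral of the $t$-derivatives appearing in $(\bigstar)$. I would argue as follows.
\begin{itemize}
\item The volume function is $C^1$ on the big cone (Boucksom--Favre--Jonsson, or Lazarsfeld--Musta\c{t}\u{a}). Consequently $t\mapsto \mathrm{vol}(L+tK_X-uF)$ is differentiable for all $u$ except the single threshold value where the class leaves the big cone.
\item Its derivative is bounded uniformly in $u$ on the compact range where the volume is nonzero.
\item The mean value theorem together with dominated convergence then lets us differentiate under the integral sign.
\end{itemize}
With this, the third term equals $b-\frac{(n+1)}{L^n}I$.

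\emph{Assembling.} Summing the three terms gives
$$
\beta_L(F)=1+b+\frac{n\mu-(n+1)}{L^n}\,I .
$$
It remains to check that the coefficient of $I$ is $\varphi(b)$. Using $-K_X=L-bF$ we get
$$
\mu L^n=(L-bF)\cdot L^{n-1},
$$
and therefore
$$
n\mu L^n-(n+1)L^n=-L^n-nb\,F\cdot L^{n-1}=-(L+nbF)\cdot L^{n-1}.
$$
Dividing by $(L^n)^2$ gives exactly $\varphi(b)$, which proves the proposition.
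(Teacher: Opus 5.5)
Your proof is correct and is essentially the paper's argument. Both rest on the scaling identity $\mathrm{vol}(L+tK_X-uF)=(1-t)^nV\bigl(\frac{u-b}{1-t}\bigr)$, the substitution $s=u-b$, and the same final algebra identifying $\frac{n\mu-(n+1)}{L^n}$ with $\varphi(b)$. The only difference is the order of operations in the third term: the paper differentiates pointwise to get $-nf(u-b)+f'(u-b)(u-b)$ and recovers $bL^n$ by integrating $\int_{-b}^{\infty}f'(u)u\,du$ by parts, whereas you integrate first and obtain $bL^n$ from the moving lower limit $-b/(1-t)$. That requires differentiating under the integral sign, which your dominated-convergence argument covers; the uniform derivative bound near the pseudo-effective threshold follows, e.g., from the local Lipschitz continuity of the volume function, and this is the same regularity the paper's integration by parts uses tacitly.
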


\begin{proof} Let $f(u):={\rm vol}(-K_X-uF)$. Then for any $t\in [0,1)$, we have
$$\begin{aligned}
{\rm vol}(L+tK_X-uF)&={\rm vol}(-(1-t)K_X+(b-u)F)\\
&= (1-t)^n {\rm vol}\left(-K_X+\frac{b-u}{1-t}F\right)=(1-t)^nf\left(\frac{u-b}{1-t}\right).
\end{aligned}$$
Thus,
$$\frac{\partial \mathrm{vol}\big(L+tK_X-uF\big)}{\partial t}\Big\vert_{t=0}=
-nf(u-b)+f'(u-b)\cdot (u-b).$$
Therefore, \eqref{equation:beta} becomes
$$\begin{aligned}
\beta_L(F)&=1+\frac{n(\mu-1)}{L^n}\int\limits_0^{\infty}f(u-b)du
+\frac 1 {L^n}\int\limits_0^\infty f'(u-b)(u-b)du\\
&=1+\frac{n(\mu-1)}{L^n}\int\limits_{-b}^\infty f(u)du +
\frac 1 {L^n}\int\limits_{-b}^\infty f'(u) u du.
\end{aligned}
$$
Since $f(u)=0$ for any $u\gg 0$, we have
$$\int\limits_{-b}^{\infty} f'(u)udu = -f(-b)(-b) - \int\limits_{-b}^\infty f(u)du=b L^n -\int\limits_{-b}^{\infty}f(u)du.$$
Thus,
$$
\beta_L(F)=1+ \frac {n(\mu-1)-1}{L^n}\int\limits_{-b}^\infty f(u)du+b=1+b+\varphi(b)\int\limits_{-b}^\infty f(u)du,
$$
and our claim follows.
\end{proof}

\subsection{Family \textnumero 2.28}\label{subsection:2-28}

Let $X$ be the unique smooth Fano threefold in Family \textnumero 2.28.
Then there exists a birational morphism $f\colon X\to\mathbb{P}^3$ such that $f$ is a blow-up of a smooth plane cubic curve $C\subset\mathbb{P}^3$.
We proceed in this case by applying Proposition~\ref{proposition:Paolo-1}.

\begin{lemma}
\label{example:2-28}
Let $L$ be an ample $\mathbb Q$-divisor on $X$.
Then $(X,L)$ is K-unstable for any polarisation.
\end{lemma}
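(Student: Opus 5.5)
We will apply Theorem~\ref{theorem:DervanLegendre} with the divisor $\mathscr{F}=F$, the strict transform of the plane $\Pi\subset\mathbb{P}^3$ containing $C$, and compute $\beta_L(F)$ using Proposition~\ref{proposition:Paolo-1}.

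\textbf{Setup.} Let $H=f^*\mathcal{O}_{\mathbb{P}^3}(1)$ and let $E$ be the $f$-exceptional divisor. Then $F\sim H-E$ and $F\simeq\mathbb{P}^2$. Since $C$ is a cubic in $\Pi$, we get $F\vert_F\sim -2\ell$, where $\ell$ is a line in $F$. Also $-K_X\sim 4H-E=3H+F$. We need the following intersection numbers:
\begin{itemize}
\item $H^3=1$, $H^2\cdot F=1$, $H\cdot F^2=-2$, $F^3=4$;
\item for a fibre $\mathfrak{f}$ of $E\to C$: $H\cdot\mathfrak{f}=0$ and $F\cdot\mathfrak{f}=1$;
\item for the line $\ell$: $H\cdot\ell=1$ and $F\cdot\ell=-2$.
\end{itemize}
Hence the nef cone is spanned by $H$ and $3H-E=2H+F$. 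Since $\mathrm{Pic}(X)$ has rank two, after rescaling every ample class can be written as $L=-K_X+bF=3H+(1+b)F$. Ampleness is equivalent to $L\cdot\mathfrak{f}>0$ and $L\cdot\ell>0$, that is, $b\in(-1,\tfrac12)$. Because $\beta_L$ is invariant under rescaling of $L$, it suffices to show $\beta_L(F)<0$ for all $b$ in this interval.

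\textbf{Step 1: the volume function.} Set $g(u)=\mathrm{vol}(-K_X-uF)=\mathrm{vol}(3H+tF)$ with $t=1-u$. We split into three ranges.
\begin{itemize}
\item For $0\le t\le\tfrac32$ the class is nef, so
$$g=27+27t-18t^2+4t^3.$$
\item For $t>\tfrac32$ the Zariski decomposition has $F$ in its negative part and nef part $3H+\tfrac32F$. So $g$ is constant, equal to its value at $t=\tfrac32$.
\item For $-3\le t<0$ we write $3H+tF=(3+t)H-tE$. Since $-t>0$, the summand $-tE$ lies in the fixed part, so $g=(3+t)^3$, and $g$ vanishes for $t\le -3$.
\end{itemize}
In terms of $u$ this gives
$$g(u)=\begin{cases}(4-u)^3, & 1\le u\le 4,\\ 27+27(1-u)-18(1-u)^2+4(1-u)^3, & -\tfrac12\le u\le 1.\end{cases}$$
Since $b\in(-1,\tfrac12)$, the lower limit $-b$ lies in $(-\tfrac12,1)$. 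Therefore
$$I(b)=\int_{-b}^{4}g(u)\,du$$
is an explicit polynomial in $b$: integrate the nef-range cubic over $[-b,1]$ and add $\int_1^4(4-u)^3du=\tfrac{81}{4}$.

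\textbf{Step 2: assemble $\beta$.} Compute
$$L^3=(3H+(1+b)F)^3,\qquad (L+3bF)\cdot L^2,$$
which are polynomials in $b$. These give $\varphi(b)$, and Proposition~\ref{proposition:Paolo-1} yields
$$\beta_L(F)=1+b+\varphi(b)\,I(b),$$
a rational function of $b$. Since $L^3>0$ on the ample range, the denominator $(L^3)^2$ is positive. So $\beta_L(F)<0$ reduces to showing that a single explicit polynomial in $b$ has a fixed sign on $(-1,\tfrac12)$.

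\textbf{Main obstacle.} The hard step is this sign check over the whole interval, especially near the endpoints $b\to-1$ and $b\to\tfrac12$, where $L$ degenerates and $\beta$ might approach $0$. As a sanity check, at $b=0$ we recover the anticanonical case, where $X$ is known to be K-unstable, so the sign there should be negative. I would then factor the polynomial, for instance pulling out the factors $1+b$ and $1-2b$ if they appear, and check that the remaining factor has no roots in $[-1,\tfrac12]$. If the sign fails near one endpoint, the fallback is to replace $F$ by $E$ there and compute $\beta_L(E)$ by the same method.
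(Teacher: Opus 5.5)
Your proposal is correct and is essentially the paper's proof. You use the same divisor (the strict transform of the plane of $C$), the same normalisation $L\sim_{\mathbb{Q}}-K_X+bF$ with $b\in(-1,\frac12)$, the same piecewise volume function, and Proposition~\ref{proposition:Paolo-1}. Carrying out your Step~2 gives $\beta_L(F)=\frac{9(b+1)(4b^5-16b^4-50b^3-14b^2+119b-140)}{2(4b^3-6b^2+3b+40)^2}$, which is the paper's formula. The quintic is negative on $(-1,\frac12)$: there $4b^5<\frac18$, $-50b^3<50$ and $119b<\frac{119}{2}$, while the remaining terms are non-positive. So $\beta_L(F)<0$ on the whole ample range, and the fallback to $E$ is never needed.
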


\begin{proof}
Let $\Pi$ be the plane in $\mathbb{P}^3$ that contains $C$,
let $S$ be its strict transform on $X$, and  let $E$ be the $f$-exceptional divisor.
Then we have the following Sarkisov link:
$$
\xymatrix{
\mathbb{P}^3&X\ar[l]_{f}\ar[r]^{g}&V_3}
$$
where $V_3$ is a cubic threefold in $\mathbb{P}(1,1,1,1,2)$, and $g$ is the contraction of $S$ to its singular point.
Moreover, after appropriate scaling, we have $L\sim_{\mathbb{Q}} -K_X+bS$ for some $b\in\mathbb{Q}$ with $-1<b<\frac{1}{2}$.
By Proposition~\ref{proposition:Paolo-1}, we have
$$
\beta_{L}(S)=1+b+\varphi(b)\int\limits_{-b}^\infty\mathrm{vol}(-K_X-uS)du,
$$
where
$$
\varphi(b)=\frac{-(L+3bS)\cdot L^2)}{(L^3)^2}=\frac{-2(b+1)(8b^2-17b+20)}{(4b^3-6b^2+3b+40)^2}.
$$
Now, arguing as in the proof of \cite[Lemma~3.20]{Book}, we get
$$
\mathrm{vol}(-K_X-uS)=\left\{\aligned
&40-4u^3-6u^2-3u\ \text{if $-\frac{1}{2}\leqslant u\leqslant 1$}, \\
&(4-u)^3\ \text{if $1\leqslant u\leqslant 4$}\\
\endaligned
\right.
$$
and integration results in
$$
\beta_{L}(S)=\frac{9(b+1)(4b^5-16b^4-50b^3-14b^2+119b-140)}{2(4b^3-6b^2+3b+40)^2}.
$$
Then $\beta_L(S)<0$ and, in particular, $(X,L)$ is K-unstable.
\end{proof}

\subsection{Family \textnumero 3.18}
\label{subsection:3-18}

Let $\ell$ be a line in $\mathbb{P}^3$, let $C$ be a smooth conic in $\mathbb{P}^3$ such that $C\cap \ell =\varnothing$,
and let $\pi\colon X\to \mathbb{P}^3$ be the blow-up of these curves.
Then $X$ is the unique smooth Fano threefold in Family \textnumero 3.18. Let $\Pi$ be the~strict transform on $X$ of the~plane in $\mathbb{P}^3$ that contains $C$.
Then we have the following commutative diagram:
$$
\xymatrix{
Q& & \widetilde{Q}\ar@{->}[ll]\ar@{->}[rr] & &\mathbb{P}^1 \\
& & X\ar@{->}[d]^{\pi}\ar@{->}[drr]^{\phi}\ar@{->}[dll]_{\theta}\ar@{->}[u]_{\psi} & & \\
V\ar@{->}[rr]_{\varphi}\ar@{->}[uu]^{\eta}& &\mathbb{P}^3 & &Y\ar@{->}[ll]^{\vartheta}\ar@{->}[uu]}
$$
where $Q$ is a~smooth quadric threefold,
$\widetilde{Q}\to Q$ is the~blow-up of a~conic, which is the~strict transform of the~line $\ell$,
$\psi$ is the~contraction of $\Pi$ to a point in $\widetilde{Q}$,
$\eta$ is a~blow-up of a~point in $Q$,
$\vartheta$ is the~blow-up of the~line~$\ell$,
$\varphi$ is the~blow-up of the~conic $C$,
$\theta$ and $\phi$ are the~blow-ups of the~strict transforms of the~curves $\ell$ and $C$, respectively,
\mbox{$Y\to\mathbb{P}^1$} is a~$\mathbb{P}^2$-bundle,
and $\widetilde{Q}\to\mathbb{P}^1$ is a~fibration into quadric surfaces.

Let $F$ and $E$ be the~$\pi$-exceptional surfaces that are mapped to $C$ and $\ell$, respectively, and let $H$ be the~proper transform on $X$ of a~general plane in $\mathbb{P}^3$.
Then the nef cone is generated by $H$, $H-E$ and $2H-F$. Moreover, $-K_X\sim 4H-E-F$.
Note that non-zero intersections of the divisors $H$, $E$, $F$ can be computed as follows:
$H^3=1,\ H\cdot E^2=-1, H\cdot F^2=-2, E^3=-2, F^3=-6$.

Let $L$ be an ample $\mathbb{Q}$-divisor on $X$. Then
$$
L\sim_{\mathbb{Q}} aH+b(H-E)+c(2H-F)
$$
for some $a,b,c\in\mathbb{Q}_{>0}$. Then $L^3=a^3+3a^2b+6a^2c+12abc+6ac^2+6bc^2+2c^3$
and
$$
\frac{(-K_X)\cdot L^{2}}{L^3}=2\frac{2a^2+3ab+6ac+4bc+3c^2}{a^3+3a^2b+6a^2c+12abc+6ac^2+6bc^2+2c^3}.
$$
Set $g(a,b,c)=2(a^3+3a^2b+6a^2c+12abc+6ac^2+6bc^2+2c^3)^2$.

\begin{lemma}
\label{lemma:3-18}
One has
$$
\beta_{L}(\Pi)=-3a\frac{f_1(a,b,c)}{g(a,b,c)}\ \quad \text{and}\quad \beta_{L}(E)=-\frac{f_2(a,b,c)}{g(a,b,c)},
$$
where
\begin{multline*}
\quad \quad \quad \quad f_1(a,b,c)=-3a^4b+6a^4c+27a^3c^2+24a^2b^2c+24a^2bc^2+\\
+48a^2c^3+72ab^2c^2+60abc^3+24ac^4+72b^2c^3+24bc^4, \quad \quad \quad \quad
\end{multline*}
\begin{multline*}
\quad \quad \quad \quad f_2(a,b,c)=3a^5b-6a^5c-21a^4c^2-24a^3b^2c-28a^3c^3-54a^2b^2c^2+\\
+12a^2bc^3-24a^2c^4-36ab^2c^3+6abc^4-12ac^5-12b^2c^4-2c^6.\quad \quad \quad \quad \quad \quad
\end{multline*}
\end{lemma}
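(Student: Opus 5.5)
We compute $\beta_L(\Pi)$ and $\beta_L(E)$ straight from the definition \eqref{equation:beta} with $n=3$. Since $A_X(\Pi)=A_X(E)=1$, the task reduces to finding $\mathrm{vol}(L+tK_X-uD)$ for $D\in\{\Pi,E\}$ as a piecewise polynomial in $(t,u)$ near $t=0$, and then integrating. Equivalently, we compute $\mathrm{vol}(xH-yE-zF-uD)$ on the region of interest. We first record the classes $\Pi\sim H-F$ and $E$ in the basis $H,E,F$, and note $L\sim(a+b+2c)H-bE-cF$ and $-K_X\sim 4H-E-F$. All intersection numbers then follow from $H^3=1$, $HE^2=-1$, $HF^2=-2$, $E^3=-2$, $F^3=-6$, with $E\cdot F=0$ because the curves are disjoint.

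\textbf{Volume of $L-u\Pi$.} For $0\le u\le u_1$ the divisor $L-u\Pi$ stays nef. This interval is read off from the generators $H$, $H-E$, $2H-F$ of the nef cone: $L-u\Pi=(a+b+2c-u)H-bE-(c-u)F$, and writing it as $a'H+b'(H-E)+c'(2H-F)$ gives $b'=b$, $c'=c-u$, $a'=a+u$. Hence $L-u\Pi$ is nef exactly for $0\le u\le c$, and there the volume is the cube of this class. For $u>c$ the coefficient of $F$ becomes positive and $F$ enters the negative part. To handle this we use the Zariski decomposition on the Mori dream space $X$, following the Sarkisov-type diagram: past $u=c$ the relevant contraction is $\phi$ (or the flop side $\psi$), and $\Pi$ is contracted by $\psi$ to a point of $\widetilde{Q}$. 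So for $u\ge c$ we subtract a multiple of $F$ and compute the volume on the model where $\Pi$ is contracted, namely the pullback from $\widetilde Q$ and then $Q$. We get the pseudo-effective threshold of $u$ from the fact that $\Pi$ is a generator of the effective cone together with $E$, $F$ and $H-E-F$-type classes. The same procedure gives $\mathrm{vol}(L-uE)$: it is nef for $0\le u\le b$, and beyond that we pass to $Y$ (where $E$ is the $\vartheta$-exceptional divisor) or use the $\mathbb{P}^2$-bundle structure, ending at the threshold where $L-uE$ leaves the big cone.

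\textbf{Derivative term.} For the $t$-derivative we use that $L+tK_X$ has the same form, with $(a,b,c)$ replaced by $(a-2t,b,c-t)$ or the corresponding linear change; indeed $-K_X=2H+(H-E)+(2H-F)-H$, so we rewrite it in the basis. Consequently $\frac{\partial}{\partial t}\mathrm{vol}(L+tK_X-uD)|_{t=0}$ is a directional derivative of the same piecewise polynomial in $(a,b,c)$. After integrating over $u$ piece by piece, with the breakpoints depending linearly on $(a,b,c)$, we combine the terms with $\mu=\frac{-K_X\cdot L^2}{L^3}$ and put everything over the common denominator $g(a,b,c)=2(L^3)^2$. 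Simplifying symbolically should produce $-3af_1/g$ and $-f_2/g$.

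\textbf{Main obstacle.} The hard part is the Zariski chamber decomposition for $u$ beyond the nef threshold. We have to identify which surfaces (for example $F$, or strict transforms of planes through $\ell$) enter the negative part, and in which order as $u$ grows, and this order may depend on the relative sizes of $a,b,c$. If a case split appears, we would first check whether the formula is uniform, for example because the second chamber is governed by a single divisor for all $a,b,c>0$. We expect this for $\Pi$, since it is contracted to a smooth point, which gives a uniform local contribution.
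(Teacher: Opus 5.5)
Your overall strategy matches the paper's: evaluate $(\bigstar)$ from the Zariski decomposition of $L-uD$ for $D\in\{\Pi,E\}$. However, the chamber analysis, which is the entire content of the proof, is wrong wherever you commit to specifics.

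\textbf{The divisor $E$.} Since $L-uE=(a-u)H+(b+u)(H-E)+c(2H-F)$, the class is nef exactly for $u\le a$, not $u\le b$. Past $u=a$ the curve that goes negative is $\mathbf{f}$, since $(L-uE)\cdot\mathbf{f}=a-u$, and $\mathbf{f}$ sweeps out $\Pi$. So $N=(u-a)\Pi$ and $P=(b+u)(H-E)+(a+c-u)(2H-F)$. This $P$ is orthogonal to $\mathbf{f}$, hence pulled back from $\widetilde Q$. The threshold is $\tau(E)=a+c$, where $P$ becomes $(a+b+c)(H-E)$, which is not big. Passing to $Y$ or its $\mathbb{P}^2$-bundle structure, as you propose, contracts $F$, which never enters $N(L-uE)$, so it gives the wrong volume.

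\textbf{The divisor $\Pi$.} Here you have the models the other way round. You correctly get nefness on $[0,c]$ and $N=(u-c)F$ afterwards. But then $P=(a+2c-u)H+b(H-E)$ is pulled back from $Y=\mathrm{Bl}_\ell\mathbb{P}^3$ via the contraction of $F$, not from $\widetilde Q$ or $Q$. Pushing $L-u\Pi$ forward to a model where $\Pi$ is contracted forgets $u$ altogether. The threshold is $\tau(\Pi)=a+2c$, where $P$ degenerates to $b(H-E)$.

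\textbf{The derivative term.} Since $-K_X\sim H+(H-E)+(2H-F)$, the divisor $L+tK_X$ corresponds to $(a-t,b-t,c-t)$. Your substitution $(a-2t,b,c-t)$ would produce a wrong derivative term.

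What you are missing is the elementary test the paper uses. The cone $\overline{\mathrm{NE}}(X)$ is spanned by $\mathbf{s}$, $\mathbf{f}$, $\ell$. The divisors $\Pi$, $E$, $F$ pair with $(\mathbf{s},\mathbf{f},\ell)$ as $(0,-1,1)$, $(-1,1,0)$, $(0,2,-1)$, and $L$ pairs as $(b,a,c)$. Pairing $L-uD$ with the three generators tells you where nefness fails and which surface enters $N$: it is the one swept out by the curve going negative. Checking that the new $P$ is nef and kills that curve confirms the decomposition. This gives exactly two chambers for each of $\Pi$ and $E$, uniformly for $a,b,c>0$. So no case split occurs, and no heuristic about contraction to a smooth point is needed.

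One further caution: done correctly, the computation yields $\beta_L(\Pi)=-af_1/g$ and $\beta_L(E)=-2f_2/g$, not the displayed constants.
\begin{itemize}
\item At $L=-K_X$ one finds $\beta(\Pi)=-7/48$ and $\beta(E)=11/72$, whereas the displayed formulas give $-7/16$ and $11/144$.
\item The corrected normalisation agrees with the paper's own formulas in \S\ref{subsection:2-30} at $(a,b,c)=(1,0,1)$ and in \S\ref{subsection:2-33} at $(1,1,0)$.
\end{itemize}
The signs, which are all that Lemma~\ref{lemma:3-18-unstable} uses, are unaffected.
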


\begin{proof}
First, we observe that $\Pi\cong\mathbb{F}_1$.
Denote by $\mathbf{s}$ the~unique $(-1)$-curve in the~surface $\Pi$, denote by $\mathbf{f}$ and $\ell$ the fibres of the~natural projections~\mbox{$\Pi\to\mathbb{P}^1$} and $F\to C$, respectively.
Then the~curves $\mathbf{s}$, $\mathbf{f}$ and $\ell$ generate the~Mori cone $\overline{\mathrm{NE}}(X)$,
and the~corresponding extremal contractions are $\phi$, $\psi$ and $\theta$, respectively.
Note also that $\Pi\vert_{\Pi}\sim-\mathbf{s}-\mathbf{f}$ and $\Pi\cdot\ell=1$.
Therefore, for $u\in\mathbb{R}_{\geqslant 0}$, we have
\[
(L-u\Pi)\cdot\mathbf{s}=b,\quad
(L-u\Pi)\cdot\mathbf{f}=a+u,\quad
(L-u\Pi)\cdot\ell=c-u.
\]
This shows that $L-u\Pi$ is nef for $u\in[0,c]$. Similarly, if $u\in[c,a+2c]$, then the positive part of the Zariski decomposition of the divisor $L-u\Pi$ is $(a+b+2c-u)H-bE$, and its negative part is $(u-c)F$. For $u>a+2c$, the divisor $L-u\Pi$ is not pseudoeffective. Now, using \eqref{equation:beta}, we obtain the required formula for $\beta_{L}(\Pi)$. The formula for $\beta_{L}(E)$ can be obtained in a similar way.
\end{proof}

\begin{lemma}
\label{lemma:3-18-unstable}
Suppose that $L$ is ample. Then $\beta_{L}(\Pi)<0$ or $\beta_{L}(E)<0$.
\end{lemma}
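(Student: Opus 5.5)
Since $g(a,b,c)>0$ and $a>0$, the statement reduces to a purely algebraic claim about the polynomials in Lemma~\ref{lemma:3-18}. We have $\beta_L(\Pi)<0$ if and only if $f_1(a,b,c)>0$, and $\beta_L(E)<0$ if and only if $f_2(a,b,c)>0$. So it is enough to show that for all $a,b,c>0$ we cannot have both $f_1\leqslant 0$ and $f_2\leqslant 0$. The plan is to exploit the fact that each polynomial has exactly one term of the "wrong" sign involving $b$ linearly: $-3a^4b$ in $f_1$, and $+3a^5b$ in $f_2$.

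\textbf{Step 1.} In $f_1$ every monomial except $-3a^4b$ has a positive coefficient. Hence $f_1\leqslant 0$ forces
$$
3a^4b\geqslant 6a^4c+27a^3c^2+24a^2b^2c+\cdots,
$$
and in particular $b\geqslant 2c$, together with a sharper lower bound on $b$ in terms of $a$ and $c$.

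\textbf{Step 2.} Now look at $f_2$. Its positive terms are $3a^5b$, $12a^2bc^3$ and $6abc^4$. I would bound these by negative terms, using the inequality $f_1\leqslant 0$ multiplied by $a$. Writing
$$
af_1=-3a^5b+6a^5c+\dots,
$$
the sum $f_2+af_1$ cancels the $a^5b$ term. Then
$$
f_2+af_1=(6-6)a^5c+(27-21)a^4c^2+(24-24)a^3b^2c+\cdots+12a^2bc^3+6abc^4-\cdots,
$$
and I expect many terms to cancel. The remainder should be a polynomial $R(a,b,c)$ which is negative, or at least controllable, when $b\geqslant 2c$. More simply, I would try to find positive rational weights $\lambda(a,b,c)$ (for example $\lambda=a$ or $\lambda=a+\kappa c$) such that $f_2+\lambda f_1$ has all coefficients of one sign. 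This would give the contradiction in the form
$$
0\geqslant f_2+\lambda f_1 \quad\text{versus}\quad f_2+\lambda f_1>0.
$$
Alternatively, one can directly use $f_1\leqslant 0$ and $f_2\leqslant 0$ and show that they are incompatible by dehomogenising ($c=1$) and comparing the two regions in the $(a,b)$-quadrant.

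\textbf{Main obstacle.} The difficulty is finding the right combination. The naive choice $\lambda=a$ leaves mixed-sign terms such as $+27a^4c^2$ versus $-21a^4c^2$, or $+24a^3bc^2$ against nothing, so one probably has to also use Step 1's bound $b\geqslant 2c$ to absorb the leftover positive terms. The fallback is to set $c=1$ and, by Step 1, regard $f_1\leqslant 0$ as forcing $b$ to lie in a region where
$$
b\gtrsim \frac{6a^4+27a^3+\cdots}{3a^4-24a^2-\cdots}.
$$
One would then substitute this into $f_2$, which is quadratic in $b$ with negative $b^2$ coefficient $-(24a^3+54a^2+36a+12)$, and check $f_2>0$ fails — wait, we need $f_2>0$ there. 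Since $f_2$ is a concave quadratic in $b$, it suffices to check positivity at the boundary curve $f_1=0$ and at large $b$. At large $b$ the quadratic is negative, so the bound must come from the curve itself. This boundary analysis, presumably done with computer algebra as the authors indicate, is where I expect the real work to lie.
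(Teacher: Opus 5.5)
There is a genuine gap. The proposal never produces the inequality that gives the contradiction, and your reasons for setting aside the natural multiplier $\lambda=a$ are mistaken. A direct computation gives $af_1+f_2=2c^2\Delta(a,b,c)$, where
$\Delta=3a^4+12a^3b+10a^3c+9a^2b^2+36a^2bc+18ab^2c+15abc^2-6ac^3-6b^2c^2-c^4$.

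The terms you flag, $27a^4c^2-21a^4c^2$ and $24a^3bc^2$, survive with positive sign and are harmless. The real obstruction is the three negative terms $-6ac^3-6b^2c^2-c^4$, which are $-12ac^5-12b^2c^4-2c^6$ before factoring.

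The bound $b\geqslant 2c$ from Step 1 cannot absorb these. If $a\to 0$ with $b,c$ fixed, then $\Delta\to -6b^2c^2-c^4<0$ whatever $b$ is. What is needed is a lower bound on $a$ relative to $c$.

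The paper obtains this bound by a case split:
\begin{itemize}
\item If $a\geqslant c$, writing $a=c+\epsilon$ turns $\Delta$ into a polynomial in $b,c,\epsilon$ with all coefficients positive. So $\Delta>0$, contradicting $af_1+f_2\leqslant 0$.
\item If $c\geqslant a$, writing $c=a+\delta$ turns $f_1$ into a polynomial with positive coefficients, contradicting $f_1\leqslant 0$.
\end{itemize}
Your Step 1 could also have supplied the bound directly. From $f_1\leqslant 0$ you get $3a^4b\geqslant 24a^2bc^2$, hence $a^2\geqslant 8c^2$. Then $3a^4>c^4$, $10a^3c>6ac^3$ and $9a^2b^2>6b^2c^2$, so $\Delta>0$. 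That closes the argument with $\lambda=a$ exactly as you first proposed.

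Your fallback is also flawed. With $c=1$, $f_1$ is a convex quadratic in $b$ with leading coefficient $24a^2+72a+72>0$. So for each $a$ the set $\{f_1\leqslant 0\}$ is a bounded $b$-interval, not a region of the form $b\gtrsim(\ldots)$.

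The observation that $f_2<0$ for large $b$ is therefore irrelevant. Indeed, if such $b$ were admissible, both $\beta$-invariants would be non-negative there and the lemma would be false.

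Checking the concave function $f_2$ at the two roots of $f_1$ is a legitimate idea, but it involves square roots and you did not carry it out. As written, the proposal does not prove the lemma.
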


\begin{proof}
Note that $a$, $b$, $c$ are positive. Suppose that $\beta_{L}(\Pi)\geqslant 0$ and $\beta_{L}(E)\geqslant 0$. Then $f_1(a,b,c)\leqslant 0$ and $f_2(a,b,c)\leqslant 0$. Set
$$
\Delta(a,b,c)=3a^4+12a^3b+10a^3c+9a^2b^2+36a^2bc+18ab^2c+15abc^2-6ac^3-6b^2c^2-c^4.
$$
Then $af_1(a,b,c)+f_2(a,b,c)=2c^2\Delta(a,b,c)$. Thus, we see that $\Delta(a,b,c)\leqslant 0$. If $a\geqslant c$, then
\begin{multline*}
0\geqslant \Delta(a,b,c)=21b^2c^2+36b^2c\epsilon+9b^2\epsilon^2+63bc^3+123bc^2\epsilon+72bc\epsilon^2+\\
+12b\epsilon^3+6c^4+36c^3\epsilon+48c^2\epsilon^2+22c\epsilon^3+3\epsilon^4>0,
\end{multline*}
where $\epsilon=a-c$. Similarly, if $c\geqslant a$, then
\begin{multline*}
0\geqslant f_1(a,b,c)=105a^5+105a^4b+300a^4\delta+168a^3b^2+324a^3b\delta+315a^3\delta^2+384a^2b^2\delta+\\
+348a^2b\delta^2+144a^2\delta^3+288ab^2\delta^2+156ab\delta^3+24a\delta^4+72b^2\delta^3+24b\delta^4>0,
\end{multline*}
where $\delta=c-a$. The obtained contradiction completes the proof of the lemma.
\end{proof}

Hence, if $L$ is ample, then $(X,L)$ is K-unstable.

\subsection{Family \textnumero 3.21}
\label{subsection:3-21}

Let $X$ be the unique smooth Fano threefold in Family \textnumero 3.21.
Then it follows from \cite{IsPr99} that there exists a birational morphism $\pi\colon X\to\mathbb{P}^1\times\mathbb{P}^2$ that is a blow-up of a~smooth curve $C$ of degree~$(2,1)$.
Let $E$ be the $\pi$-exceptional surface, let $S$ be the~proper transform on $X$ of the~surface in $\mathbb{P}^1\times\mathbb{P}^2$ of degree $(0,1)$ that passes through $C$, let $H_1=(\mathrm{pr}_1\circ\pi)^*(\mathcal{O}_{\mathbb{P}^1}(1))$ and $H_2=(\mathrm{pr}_2\circ\pi)^*(\mathcal{O}_{\mathbb{P}^2}(1))$, where $\mathrm{pr}_1$ and $\mathrm{pr}_2$ are projections of $\mathbb{P}^1\times\mathbb{P}^2$ to the first and the second factors, respectively. Then $S\sim H_2-E$, and $S\vert_{S}$ is a divisor on $S\simeq\mathbb{P}^1\times\mathbb{P}^1$ of degree $(-1,-1)$. Moreover, it follows from  the~proof of \cite[Lemma~8.22]{CheltsovShramovUMN} that there~is a~commutative diagram
$$
\xymatrix{
\mathbb{P}^1& &\mathbb{P}^1\times\mathbb{P}^2\ar@{->}[ll]\ar@{->}[rr] & &\mathbb{P}^2 \\
& & X\ar@{->}[drr]\ar@{->}[dll]\ar@{->}[u]_{\pi} & & \\
U_1\ar@{->}[rr]\ar@{->}[uu]& &V & &U_2\ar@{->}[ll]\ar@{->}[uu]}
$$
where $X\to U_1$ and $X\to U_2$ are birational contractions of the surface $S$ such that the~induced birational map $U_1\dashrightarrow U_2$ is an~Atiyah flop, the morphism $U_1\to \mathbb{P}^1$ is a~fibration into quadric surfaces, the~morphism $U_2\to \mathbb{P}^2$ is a~$\mathbb{P}^1$-bundle, and $V$ is a~singular Fano threefold with $\mathrm{Pic}(V)=\mathbb{Z}$ and $(-K_V)^3=40$ that has one ordinary double singularity, which can be smoothed to a Fano threefold in Family~\textnumero 1.15. We refer the~reader to the~case~(2.3.2) in~\cite[Theorem~2.3]{TakeuchiNew}.

To describe the nef cone of $X$, observe that the morphisms $X\to\mathbb{P}^1$, $X\to\mathbb{P}^2$ and $X\to V$ in the commutative diagram above are given by the linear systems $|H_1|$, $|H_2|$ and $|H_1+2H_2-E|$, respectively. Thus, the nef cone of $X$ is generated by $H_1$, $H_2$, and $H_1+2H_2-E$. Similarly, to describe the Mori cone of $X$, let $\ell_1$ and $\ell_2$ be the~rulings of the~surface $S\simeq\mathbb{P}^1\times\mathbb{P}^1$  such that the~curves $\pi(\ell_1)$ and $\pi(\ell_2)$ are curves in  $\mathbb{P}^1\times\mathbb{P}^2$ of degree $(1,0)$ and $(0,1)$, respectively, and let $\ell_3$ be a~fibre of the~natural projection~\mbox{$E\to C$}. Then $\ell_1$, $\ell_2$, $\ell_3$ generate the~Mori cone $\overline{\mathrm{NE}}(X)$, and the~rays $\mathbb{R}_{\geqslant 0}[\ell_1]$ and $\mathbb{R}_{\geqslant 0}[\ell_2]$ give birational contractions $X\to U_1$ and $X\to U_2$, respectively.

Now, we let $L$ be a nef and big $\mathbb{Q}$-divisor on $X$. Then
$$
L\sim_{\mathbb{Q}} aH_1+bH_2+c(H_1+2H_2-E)
$$
for some non-negative rational numbers $a$, $b$, $c$. Then $L^3=3ab^2+12abc+6ac^2+3b^2c+9bc^2+5c^3$ and
$$
\frac{(-K_X)\cdot L^{2}}{L^3}=2\frac{3ab+4ac+b^2+6bc+5c^2}{3ab^2+12abc+6ac^2+3b^2c+9bc^2+5c^3}.
$$

\begin{lemma}
\label{lemma:3-21}
One has
$$
\beta_{L}(S)=-\frac{cf(a,b,c)}{(3ab^2+12abc+6ac^2+3b^2c+9bc^2+ 5c^3)^2},
$$
where
\begin{multline*}
\quad\quad \quad \quad \quad  f(a,b,c)=12a^2b^3+36a^2b^2c+36a^2bc^2+6ab^4+48ab^3c+\\
+114ab^2c^2+93abc^3+12ac^4+4b^3c^2+15b^2c^3+18bc^4+5c^5.\quad \quad \quad \quad
\end{multline*}
\end{lemma}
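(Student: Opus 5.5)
The plan is to evaluate \eqref{equation:beta} directly for the prime divisor $S\subset X$, for which $A_X(S)=1$ and $n=3$. This requires the Zariski decomposition of $L-uS$ for all $u\geqslant 0$, which we read off from the intersection numbers with the generators $\ell_1,\ell_2,\ell_3$ of $\overline{\mathrm{NE}}(X)$.

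\textbf{Step 1: intersection numbers.} Using $H_1\cdot\ell_1=1$, $H_2\cdot\ell_1=0$, $H_1\cdot\ell_2=0$, $H_2\cdot\ell_2=1$, $H_1\cdot\ell_3=H_2\cdot\ell_3=0$ and $E\cdot\ell_3=-1$, together with the fact that $H_1+2H_2-E$ contracts $\ell_1$ and $\ell_2$, we get $E\cdot\ell_1=1$ and $E\cdot\ell_2=2$. Hence
\[
L\cdot\ell_1=a,\quad L\cdot\ell_2=b,\quad L\cdot\ell_3=c,
\qquad
S\cdot\ell_1=S\cdot\ell_2=-1,\quad S\cdot\ell_3=1,
\]
where the last three are consistent with $S|_S$ having degree $(-1,-1)$. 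Therefore
\[
(L-uS)\cdot\ell_1=a+u,\quad (L-uS)\cdot\ell_2=b+u,\quad (L-uS)\cdot\ell_3=c-u,
\]
so $L-uS$ is nef for $u\in[0,c]$.

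\textbf{Step 2: Zariski decomposition past $u=c$.} For $u>c$ the curves $\ell_3$, which sweep out $E$, become negative, so $E$ enters the negative part. Setting $P(u)=L-uS-(u-c)E$ gives $P(u)\cdot\ell_3=0$, and expanding $S\sim H_2-E$ yields
\[
P(u)=(a+c)H_1+(b+2c-u)H_2.
\]
This is nef for $u\leqslant b+2c$ and not big at $u=b+2c$. Consequently
\[
\mathrm{vol}(L-uS)=3(a+c)(b+2c-u)^2 \ \text{for } u\in[c,b+2c],
\qquad
\mathrm{vol}(L-uS)=0 \ \text{for } u\geqslant b+2c.
\]
On $[0,c]$ the volume is simply $(L-uS)^3$, computed from the intersection ring of $X$ (or from $L^3$ and the data of $S$).

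\textbf{Step 3: the derivative term.} Volume is $C^1$ on the big cone with derivative $3P^2\cdot(\,\cdot\,)$, so
\[
\frac{\partial}{\partial t}\mathrm{vol}(L+tK_X-uS)\Big|_{t=0}=3P(u)^2\cdot K_X,
\qquad K_X\sim -2H_1-3H_2+E.
\]
On $[c,b+2c]$ this equals $3\cdot 2(a+c)(b+2c-u)\,H_1H_2\cdot K_X=-18(a+c)(b+2c-u)$, using $H_1H_2^2=1$, $H_2^3=0$ and $H_1H_2\cdot E=0$. On $[0,c]$ it is $3(L-uS)^2\cdot K_X$.

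\textbf{Step 4: assembly and expected obstacle.} Integrate the pieces, insert $\mu=(-K_X\cdot L^2)/L^3$ and the value of $L^3$ given above, and simplify to the stated rational function; the overall factor $c$ should appear naturally as the length of the nef range. The main obstacle is bookkeeping on $[0,c]$: one needs the triple intersections $S^3$, $S^2\cdot H_i$, $S^2\cdot E$, $S\cdot H_i\cdot E$, and so on, for example from $S|_S=\mathcal{O}(-1,-1)$, $H_i|_S$ and $E|_S$. I would compute these first and then carry out the final algebraic simplification with computer algebra, checking that the result factors as $-c\,f(a,b,c)$.
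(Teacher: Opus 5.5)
Your strategy is the paper's own argument. $L-uS$ is nef for $u\in[0,c]$. For $u\in[c,b+2c]$ the negative part is $(u-c)E$ and the positive part is $P(u)=(a+c)H_1+(b+2c-u)H_2$. Steps 1 and 2 are correct, including $E\cdot\ell_1=1$, $E\cdot\ell_2=2$, the covering-family argument with $\ell_3$, and $\mathrm{vol}(L-uS)=3(a+c)(b+2c-u)^2$.

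The error is in Step 3, where you dropped a term. Since $H_1^2\equiv 0$,
\[
P(u)^2=2(a+c)(b+2c-u)\,H_1\cdot H_2+(b+2c-u)^2\,H_2^2 .
\]
The second term does not vanish against $K_X$. The class $H_2^2$ is a general fibre of $X\to\mathbb{P}^2$, which misses $E$, so $H_2^2\cdot K_X=H_2^2\cdot(-2H_1-3H_2+E)=-2$. Hence on $[c,b+2c]$ the correct integrand is
\[
3P(u)^2\cdot K_X=-18(a+c)(b+2c-u)-6(b+2c-u)^2,
\]
not $-18(a+c)(b+2c-u)$.

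This changes the answer. With your integrand, the last integral in \eqref{equation:beta} is too large by $6\int_c^{b+2c}(b+2c-u)^2\,du=2(b+c)^3$. You would therefore obtain $\beta_L(S)+2(b+c)^3/L^3$. For example, at $c=0$ you would get $2b/(3a)$ instead of $0$, so your final check that the result equals $-c\,f(a,b,c)/(L^3)^2$ would fail.

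With the corrected term the computation closes. Put $A=a+c$ and $B=b+2c$; then
\[
\int_0^{\infty}\mathrm{vol}(L-uS)\,du=AB^3-2Ac^3-Bc^3+\tfrac{3}{2}c^4,\qquad
\int_0^{\infty}\frac{\partial\,\mathrm{vol}(L+tK_X-uS)}{\partial t}\Big\vert_{t=0}du=-9AB^2-2B^3+6Ac^2+3Bc^2+c^3 .
\]
Combined with $L^3$ and $-K_X\cdot L^2$, this gives exactly $-c\,f(a,b,c)/(L^3)^2$.

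The factor $c$ is better explained by Proposition~\ref{proposition:Paolo-2} than by the length of the nef range. For $c=0$, $L$ is pulled back from $\mathbb{P}^1\times\mathbb{P}^2$, where the $\beta$-invariant of $\mathbb{P}^1\times(\text{line})$ with respect to $aH_1+bH_2$ vanishes. That check would have caught the dropped term.
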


\begin{proof}
Take $u\in\mathbb{R}_{\geqslant 0}$. Then $L-uS$ is nef for $u\leqslant c$, and the divisor $L-uS$ is not pseudoeffective for $u>b+2c$. Moreover, if $u\in[c,b+2c]$, the positive part of the Zariski decomposition of the divisor $L-uS$ is $(a+c)H_1+(b+2c-u)H_2$, and its negative part is $(u-c)E$. Now, using \eqref{equation:beta}, we get the required formula for $\beta_L(S)$.
\end{proof}

Hence, if $L$ is ample, then $a$, $b$, $c$ are positive, so that $\beta_{L}(S)<0$ and $(X,L)$ is K-unstable.

\subsection{Family \textnumero 3.22}
\label{subsection:3-22}

Let $\mathrm{pr}_1\colon \mathbb{P}^1\times\mathbb{P}^2\to\mathbb{P}^1$ and $\mathrm{pr}_2\colon \mathbb{P}^1\times\mathbb{P}^2\to\mathbb{P}^2$ be the~projections to the~first and the~second factors, respectively, let $F_1$ be a~fibre of the~map $\mathrm{pr}_1$, let \mbox{$F_2=\mathrm{pr}_2^*(\mathcal{O}_{\mathbb{P}^2}(1))$},
let $C$ be a~conic in $F_1\simeq\mathbb{P}^2$, and let $\pi\colon X\to \mathbb{P}^1\times\mathbb{P}^2$ be the blow-up of the curve $C$.
Then $X$ is the unique smooth Fano threefold in Family \textnumero 3.22.  Let $E$ be the~exceptional surface of $\pi$, let $\widetilde{F}_1$ be the~strict transform on $X$ of the~surface $F_1$, let $H_1=\pi^*(F_1)$, let $H_2=\pi^*(F_2)$, let $S$ be~the~surface in $|2H_2|$ that contains $C$, and let $\widetilde{S}$ be  the~strict transform of this surface on $X$. Then all non-zero intersections of the divisors $H_1$, $H_2$, $E$ are $H_1\cdot H_2^2=1$, $H_2\cdot E^2=-2$ and $E^3=-4$, and there exists the~following commutative diagram:
$$
\xymatrix{
\mathbb{P}^1& &\mathbb{P}^1\times\mathbb{P}^2\ar@{->}[ll]_{\mathrm{pr}_1}\ar@{->}[rr]^{\mathrm{pr}_2} & &\mathbb{P}^2 \\
& & X\ar@{->}[drr]^{\phi}\ar@{->}[dll]_{\theta}\ar@{->}[u]_{\pi} & & \\
Y\ar@{->}[rr]_{\varphi}\ar@{->}[uu]^{\eta}& &\mathbb{P}(1,1,1,2) & &\mathbb{P}\big(\mathcal{O}_{\mathbb{P}^2}\oplus\mathcal{O}_{\mathbb{P}^2}(2)\big)\ar@{->}[ll]^{\vartheta}\ar@{->}[uu]_{\sigma}}
$$
where $\theta$ and $\phi$ are birational contractions of  $\widetilde{F}_1\cong\mathbb{P}^2$ and $\widetilde{S}\cong\mathbb{P}^1\times\mathbb{P}^1$ respectively, $\vartheta$ and $\varphi$ are birational contractions of $\phi({H}_1)$ and $\theta(\widetilde{S})$ respectively, $\sigma$ is a~$\mathbb{P}^1$-bundle, and $\eta$ is a~fibration into del Pezzo surfaces such that all its fibres except $\theta(\widetilde{S})$ are isomorphic to $\mathbb{P}^2$,
while $\theta(\widetilde{S})\cong\mathbb{P}(1,1,4)$. This commutative diagram shows that the~Mori cone $\overline{\mathrm{NE}}(X)$ is generated by the~extremal rays contracted by $\pi$, $\phi$ and $\theta$, and the nef cone of $X$ is generated by  $H_1$, $H_2$, $H_1+2H_2-E$.

Let $L$ be a nef and big $\mathbb{Q}$-divisor on $X$. Then
$$
L\sim_{\mathbb{Q}} aH_1+bH_2+c(H_1+2H_2-E)
$$
for some non-negative rational numbers $a$, $b$, $c$. Then $L^3=3ab^2+12abc+12ac^2+3b^2c+6bc^2+4c^3$ and
$$
\frac{(-K_X)\cdot L^{2}}{L^3}=2\frac{3ab+6ac+b^2+5bc+5c^2}{3ab^2+12abc+12ac^2+3b^2c+6bc^2+4c^3}.
$$
Note that $-K_X\sim 2H_1+3H_2-E$.

\begin{lemma}
\label{lemma:3-22}
One has
$$
\beta_{L}(\widetilde{F}_1)=-\frac{3cf(a,b,c)}{(3ab^2+12abc+12ac^2+3b^2c+6bc^2+ 4c^3)^2},
$$
where
\begin{multline*}
\quad \quad \quad \quad \quad f(a,b,c)=6a^2b^3+21a^2b^2c+24a^2bc^2+12a^2c^3+6ab^3c+\\
+16ab^2c^2+10abc^3+4ac^4+4b^3c^2+9b^2c^3+6bc^4+2c^5. \quad \quad \quad \quad  \quad
\end{multline*}
\end{lemma}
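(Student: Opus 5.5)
The plan is to compute every term of \eqref{equation:beta} for $\mathscr{F}=\widetilde{F}_1$ directly, with $n=3$ and $A_X(\widetilde{F}_1)=1$, as in Lemmas~\ref{lemma:3-18} and~\ref{lemma:3-21}. The key input is the Zariski decomposition of $L-u\widetilde{F}_1$ for $u\geqslant 0$.

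First I would pin down the class and normal bundle of $\widetilde{F}_1$. Since $F_1\supset C$, we have $H_1=\pi^*(F_1)=\widetilde{F}_1+E$, so $\widetilde{F}_1\sim H_1-E$. I would then test $L-u\widetilde{F}_1$ against three generators of $\overline{\mathrm{NE}}(X)$:
\begin{itemize}
\item a line $\mathbf{l}\subset\widetilde{F}_1\cong\mathbb{P}^2$, which is contracted by $\theta$;
\item a fibre $\mathbf{f}$ of $E\to C$, which is contracted by $\pi$;
\item a ruling $\mathbf{r}$ of $\widetilde{S}$ contracted by $\phi$, namely the strict transform of a curve $\mathbb{P}^1\times\{pt\}$ through a point of $C$.
\end{itemize}
Using $H_1\cdot\mathbf{l}=0$, $H_2\cdot\mathbf{l}=1$, $E\cdot\mathbf{l}=2$, $E\cdot\mathbf{f}=-1$, $H_2\cdot\mathbf{r}=0$, $H_1\cdot\mathbf{r}=E\cdot\mathbf{r}=1$, I expect
$$
(L-u\widetilde{F}_1)\cdot\mathbf{l}=b+2u,\quad (L-u\widetilde{F}_1)\cdot\mathbf{f}=c-u,\quad (L-u\widetilde{F}_1)\cdot\mathbf{r}=a .
$$
Hence $L-u\widetilde{F}_1$ is nef exactly for $u\in[0,c]$.

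For $u\geqslant c$ only $\mathbf{f}$ becomes negative, so the negative part should be a multiple of $E$. Imposing $(L-u\widetilde{F}_1-vE)\cdot\mathbf{f}=0$ gives $v=u-c$, and the positive part becomes
$$
P_u=(a+c-u)H_1+(b+2c)H_2 .
$$
This is nef for $u\leqslant a+c$, since $H_1$ and $H_2$ are nef. I would justify that this is the Zariski decomposition by checking $P_u\cdot\mathbf{f}=0$ and noting that $E$ is covered by $\mathbf{f}$. For $u>a+c$ the volume is zero, because $H_2^3=0$; indeed $P_{a+c}$ already has volume zero. So
$$
\mathrm{vol}(L-u\widetilde{F}_1)=
\begin{cases}
(L-u\widetilde{F}_1)^3, & 0\leqslant u\leqslant c,\\
3(a+c-u)(b+2c)^2, & c\leqslant u\leqslant a+c,
\end{cases}
$$
where the cubic is expanded using $H_1\cdot H_2^2=1$, $H_2\cdot E^2=-2$, $E^3=-4$.

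For the derivative term in \eqref{equation:beta} I would write $-K_X=2H_1+3H_2-E=H_1+H_2+(H_1+2H_2-E)$. Then
$$
L+tK_X=(a-t)H_1+(b-t)H_2+(c-t)(H_1+2H_2-E),
$$
so for small $t$ the same piecewise description holds with $(a,b,c)$ replaced by $(a-t,b-t,c-t)$, breakpoints included. The $t$-derivative at $t=0$ is then $-(\partial_a+\partial_b+\partial_c)$ applied to the integrand. Because the breakpoints move with $t$, it is cleanest to first integrate $\mathrm{vol}(L+tK_X-u\widetilde{F}_1)$ in $u$ and then differentiate; the volume is continuous at the breakpoints, so no boundary terms arise. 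Combining this with $L^3$ and $\mu=(-K_X\cdot L^2)/L^3$ from the formulas already stated gives an explicit rational function, which should simplify to the claimed $-3cf(a,b,c)/(L^3)^2$.

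The main obstacle is twofold. One part is making sure the Zariski decomposition is correct: that $E$ is the only negative component on $[c,a+c]$ and that the intersection numbers with $\mathbf{l}$ and $\mathbf{r}$ are right. The other is the routine but error-prone polynomial simplification, which I would verify symbolically. Since every coefficient of $f$ is positive, this gives $\beta_L(\widetilde{F}_1)<0$ whenever $c>0$, in particular for ample $L$. K-instability then follows from Theorem~\ref{theorem:DervanLegendre}.
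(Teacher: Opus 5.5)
Your proposal is correct and follows the paper's proof. The paper also uses the nef range $u\in[0,c]$, the Zariski decomposition on $[c,a+c]$ with negative part $(u-c)E$ and positive part $(a+c-u)H_1+(b+2c)H_2$, and the resulting piecewise volume, which it then substitutes into \eqref{equation:beta}. Your additions fill in steps the paper leaves implicit, and they carry through: the intersection numbers with the three Mori cone generators, the covering-family argument for $E$, and the treatment of the $t$-derivative as $-(\partial_a+\partial_b+\partial_c)$ of the integrated volume, which gives $\beta_L(\widetilde{F}_1)=1-(\partial_a+\partial_b+\partial_c)\big(\int_0^\infty\mathrm{vol}(L-u\widetilde{F}_1)\,du/L^3\big)$ and does reduce to $-3cf(a,b,c)/(L^3)^2$.
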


\begin{proof}
Take $u\in\mathbb{R}_{\geqslant 0}$. Then
$$
L-u\widetilde{F}_1\sim_{\mathbb{R}}\frac{2c-2a+b}{2}E+(a+c-u)\widetilde{F}_1+\frac{b+2c}{2}S,
$$
which implies that $L-u\widetilde{F}_1$ is pseudo-effective iff $u\leqslant a+c$. Moreover, if $u\in[0,c]$, the divisor $L-u\widetilde{F}_1$ is nef.
Furthermore, if $u\in[c,a+c]$, then the positive part of the Zariski decomposition of the divisor $L-u\widetilde{F}_1$ is $(a+c-u)H_1+(b+2c)H_2$, and its negative part is $(u-c)E$. Hence,
$$
\mathrm{vol}\big(L-u\widetilde{F}_1\big)=\left\{\aligned
&\big(L-u\widetilde{F}_1\big)^3\ \text{if $u\in[0,c]$},\\
&\big((a+c-u)H_1+(b+2c)H_2\big)^3\ \text{if $u\in[c,a+c]$},
\endaligned
\right.
$$
so
$$
\mathrm{vol}\big(L-u\widetilde{F}_1\big)=\left\{\aligned
&3ab^2+12abc+12ac^2+3b^2c-3b^2u+6bc^2-6bu^2+4c^3-4u^3\ \text{if $u\in[0,c]$},\\
&3(2c+b)^2(a+c-u)\ \text{if $u\in[c,a+c]$}.
\endaligned
\right.
$$
Now, using \eqref{equation:beta}, we obtain the required expression for $\beta_{L}(\widetilde{F}_1)$.
\end{proof}

Hence, if $L$ is ample, then $a,b,c$ are positive, so that $\beta_{L}(\widetilde{F}_1)<0$, and $(X,L)$ is K-unstable.

\subsection{Family \textnumero 3.23}
\label{subsection:3-23}

Let $\mathscr{C}$ be a~smooth conic in $\mathbb{P}^3$, let $P$ be a point in $\mathscr{C}$,
let $\phi\colon V_7\to\mathbb{P}^3$ be the~blow-up of the~point~$P$, let $C$ be the~proper transform on $V_7$ of the~conic $\mathscr{C}$,
and let $\pi\colon X\to V_7$ be the blow-up of the~curve $C$. Then $X$ is the unique smooth Fano threefold in Family \textnumero 3.23,
and we have the~following commutative diagram:
$$
\xymatrix{
\mathbb{P}^2& & \widehat{Q}\ar@{->}[ll]\ar@{->}[rr] & &Q \\
& & X\ar@{->}[drr]^{\varphi}\ar@{->}[dll]_{\pi}\ar@{->}[u]_{\psi} & & \\
V_7\ar@{->}[rr]_{\phi}\ar@{->}[uu]& &\mathbb{P}^3 & &\widetilde{\mathbb{P}}^3\ar@{->}[ll]^{\varpi}\ar@{->}[uu]}
$$
where $Q$ is a~smooth quadric threefold in $\mathbb{P}^4$,
the~morphism $\varpi$ is the~blow-up of the~conic~$\mathscr{C}$,
the~morphism~\mbox{$\widetilde{\mathbb{P}}^3\to Q$} is the~contraction of the~proper transform of the~plane in $\mathbb{P}^3$ that contains $\mathscr{C}$ to a~point,
$\varphi$ is a~blow-up of the~fibre of the~morphism $\varpi$ over the~point $P$,
the morphism $\widehat{Q}\to Q$ is the~blow-up of a~line on $Q$ that passes through the~latter point,
and $\widehat{Q}\to\mathbb{P}^2$ is a~$\mathbb{P}^1$-bundle.

Let $E_P$ the~$\varphi$-exceptional surface,
let $E_C$ be the~$\pi$-exceptional surface,
let $\Pi_C$ the~proper transform on $X$ of the~plane in $\mathbb{P}^3$ that contains $C$,
and let $H=(\phi\circ\pi)^*\big(\mathcal{O}_{\mathbb{P}^3}(1)\big)$.
Then $E_P$, $E_C$, $\Pi_C$ generate the cone of effective divisors on $X$ over $\mathbb{Q}$, $\Pi_C\sim H-E_P-E_C$ and
$$
-K_X\sim 4H-2E_P-E_C\sim 4\Pi_C+2E_P+3E_C.
$$
Moreover, the nef cone of $X$ is generated by $H$, $H-E_P$ and $2H-E_P-E_C$, because linear systems $|H|$, $|H-E_P|$ and $|2H-E_P-E_C|$ are base point free and give morphisms $X\to\mathbb{P}^3$, $X\to\mathbb{P}^2$ and $X\to Q$ in the diagram above, respectively.

Let $\mathbf{s}$ be~the~$(-1)$-curve in $\Pi_C\simeq\mathbb{F}_1$, let $\mathbf{f}$ be~a~fibre of the~natural projection~\mbox{$\Pi_C\to\mathbb{P}^1$}, and let $\ell$ be a~fibre of the~projection~$E_C\to C$ that is induced~by~$\pi$. Then the~curves $\mathbf{s}$, $\mathbf{f}$, $\ell$ generate the~Mori cone $\overline{\mathrm{NE}}(X)$, and the~contractions of the~corresponding extremal rays are $\varphi$, $\psi$, $\pi$, respectively.
Moreover, the non-zero intersections of the~curves $\mathbf{s}$, $\mathbf{f}$, $\ell$  with the~divisors $\Pi_C$, $E_P$, $E_C$, $H$
are
\[
E_C\cdot\mathbf{s}=-E_P\cdot\mathbf{s}=1\,\quad \ell\cdot\Pi_C=-\ell\cdot E_C=1,\quad \mathbf{f}\cdot E_P=\mathbf{f}\cdot E_C=\mathbf{f}\cdot H=-\mathbf{f}\cdot\Pi_C=1.
\]

Let $L$ be a nef and big $\mathbb{Q}$-divisor on $X$. Then
$$
L\sim_{\mathbb{Q}} aH+b(H-E_P)+c(2H-E_P-E_C),
$$
for some non-negative rational numbers $a$, $b$, $c$. All non-zero intersections of the divisors $H$, $E_P$, $E_C$ on $X$ are
$H^3=1$, $E_P^3=1$, $E_C^2\cdot H=-2$, $E_P\cdot E_C^2=-1$, $E_C^3=-4$. This gives
$$
L^3=a^3+3a^2b+6a^2c+3ab^2+12abc+6ac^2+3b^2c+6bc^2+2c^3
$$
and
$$
\frac{(-K_X)\cdot L^{2}}{L^3}=2\frac{2a^2+4ab+6ac+b^2+5bc+3c^2}{a^3+3a^2b+6a^2c+3ab^2+12abc+6ac^2+3b^2c+6bc^2+2c^3}
$$

\begin{lemma}
\label{lemma:3-23}
One has
$$
\beta_{L}(\Pi_C)=-\frac{f(a,b,c)}{g(a,b,c)},
$$
where
\begin{multline*}
f(a,b,c)=6a^5c+3a^4b^2+33a^4bc+27a^4c^2+4a^3b^3+72a^3b^2c+126a^3bc^2+48a^3c^3+54a^2b^3c+180a^2b^2c^2+\\
+180a^2bc^3+24a^2c^4+12ab^4c+78ab^3c^2+144ab^2c^3+84abc^4+8b^3c^3+18b^2c^4+12bc^5
\end{multline*}
and $g(a,b,c)=2(a^3+3a^2b+6a^2c+3ab^2+12abc+6ac^2+3b^2c+6bc^2+2c^3)^2$.
\end{lemma}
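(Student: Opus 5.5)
The plan is to compute $\beta_L(\Pi_C)$ directly from \eqref{equation:beta}, with $n=3$ and $A_X(\Pi_C)=1$. This follows the pattern of Lemmas~\ref{lemma:3-18} and~\ref{lemma:3-22}: find the Zariski decomposition of $L-u\Pi_C$ piecewise in $u$, write $\mathrm{vol}(L-u\Pi_C)$ as a piecewise cubic, and integrate.

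\textbf{Step 1: the nef range.} Write $L\sim_{\mathbb{Q}}(a+b+2c)H-(b+c)E_P-cE_C$. From the listed intersection numbers we get $H\cdot\mathbf{s}=0$, $\Pi_C\cdot\mathbf{s}=0$, $\Pi_C\cdot\mathbf{f}=-1$ and $\Pi_C\cdot\ell=1$. Hence
\[
(L-u\Pi_C)\cdot\mathbf{s}=b,\quad (L-u\Pi_C)\cdot\mathbf{f}=a+u,\quad (L-u\Pi_C)\cdot\ell=c-u .
\]
Since $\mathbf{s},\mathbf{f},\ell$ generate $\overline{\mathrm{NE}}(X)$, the divisor $L-u\Pi_C$ is nef for $u\in[0,c]$. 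On this range $\mathrm{vol}=(L-u\Pi_C)^3$, which we expand using $H^3=1$, $E_P^3=1$, $E_C^2\cdot H=-2$, $E_P\cdot E_C^2=-1$, $E_C^3=-4$.

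\textbf{Step 2: the middle range.} For $u>c$ the class is negative on $\ell$, and $\ell$ sweeps out $E_C$, so $E_C$ enters the negative part. Subtracting $(u-c)E_C$ gives
\[
(a+b+2c-u)H-(b+c-u)E_P=(a+c)H+(b+c-u)(H-E_P).
\]
This is nef for $u\in[c,b+c]$, and it is zero on $\ell$, so it is the positive part there.

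\textbf{Step 3: the last range.} For $u\in[b+c,a+b+2c]$ the coefficient of $E_P$ becomes positive. The positive part is then $(a+b+2c-u)H$ and the negative part is $(u-b-c)E_P+(u-c)E_C$. This should be checked by verifying that $(a+b+2c-u)H$ is zero on the curves in $E_P$ and that the negative part has negative definite intersection matrix. For $u>a+b+2c$ the divisor is not pseudoeffective, since $E_P,E_C,\Pi_C$ generate the effective cone and the $H$-coefficient turns negative. This gives
\[
\mathrm{vol}(L-u\Pi_C)=\begin{cases}(L-u\Pi_C)^3, & u\in[0,c],\\ \big((a+b+2c-u)H-(b+c-u)E_P\big)^3, & u\in[c,b+c],\\ (a+b+2c-u)^3, & u\in[b+c,a+b+2c].\end{cases}
\]

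\textbf{Step 4: the derivative term.} For the third term in \eqref{equation:beta}, use $-K_X\sim 4H-2E_P-E_C$. Then $L+tK_X$ has coordinates $(a-2t,\,b,\,c-t)$ in the nef basis, which is valid for small $t$. The same chamber structure therefore holds with shifted breakpoints, and the $t$-derivative at $t=0$ can be taken chamber by chamber; the boundary terms cancel because the volume is continuous. Equivalently, $\frac{\partial}{\partial t}\mathrm{vol}=-3\,P_u^2\cdot(-K_X)$, where $P_u$ is the positive part.

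\textbf{Step 5: assembly.} Integrate these polynomial expressions, combine them with $\mu=\frac{(-K_X)\cdot L^2}{L^3}$ and $L^3$ as computed above, and clear denominators. The result should be $-f/g$ with $g=2(L^3)^2$.

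The main obstacle is Step 3: making sure the Zariski decomposition beyond $u=b+c$ is really $(a+b+2c-u)H$, and not something that still has a nontrivial $E_P$-component in the positive part. I would check this by restricting to $E_P\simeq\mathbb{F}_1$ and confirming that $H|_{E_P}\equiv0$. After that, the remaining work is the routine but lengthy symbolic integration, which I would carry out with computer algebra.
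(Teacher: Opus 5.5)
Your proposal is correct and follows the paper's proof. The paper finds the same three chambers: $L-u\Pi_C$ is nef on $[0,c]$, has negative part $(u-c)E_C$ on $[c,b+c]$, and has negative part $(u-c)E_C+(u-b-c)E_P$ on $[b+c,a+b+2c]$, with pseudoeffective threshold $a+b+2c$. It then integrates $\mathrm{vol}=P^3$ in \eqref{equation:beta}.

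\textbf{A slip in Step 4.} Since $-K_X\sim 4H-2E_P-E_C\sim H+(H-E_P)+(2H-E_P-E_C)$, the nef coordinates of $L+tK_X$ are $(a-t,\,b-t,\,c-t)$, not $(a-2t,\,b,\,c-t)$. Your version has $E_P$-coefficient $-(b+c-t)$ instead of $-(b+c-2t)$, so the chamber-by-chamber derivative would come out wrong. Your alternative formula $\partial_t\mathrm{vol}=-3P_u^2\cdot(-K_X)$ is correct and gives the right third term.

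\textbf{The justification in Step 3.} Negative-definiteness is a surface criterion and does not apply directly here. On $X$ the correct reason is that $E_P$ and $E_C$ are both contracted by $\phi\circ\pi\colon X\to\mathbb{P}^3$. Hence $\mathrm{vol}(tH+xE_P+yE_C)=t^3$ for all $x,y\geqslant 0$.
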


\begin{proof}
For real number $u\in\mathbb{R}_{\geqslant 0}$, we have
$$
L-u\Pi_C\sim_{\mathbb{R}}(a+b+2c-u)\Pi_C+(a+c)E_P+(c+a+b)E_C.
$$
This shows that $\tau(\Pi_C)=a+b+2c$. Moreover, for $u\in[0,a+b+2c]$, the Zariski decomposition of the divisor $L-u\Pi_C$ can be described as follows:
$$
P\big(L-u\Pi_C\big)=\left\{\aligned
&(a+b+2c-u)H-(b+c-u)E_P-(c-u)E_C\ \text{if $u\in[0,c]$},\\
&(a+b+2c-u)H-(b+c-u)E_P\ \text{if $u\in[c,b+c]$},\\
&(a+b+2c-u)H\ \text{if $u\in[b+c,a+b+2c]$},
\endaligned
\right.
$$
and
$$
N\big(L-u\Pi_C\big)=\left\{\aligned
&0\ \text{if $u\in[0,c]$},\\
&(u-c)E_C\ \text{if $u\in[c,b+c]$},\\
&(u-c)E_C+(u-b-c)E_P\ \text{if $u\in[b+c,a+b+2c]$}.\\
\endaligned
\right.
$$
Then $\mathrm{vol}\big(L-u\Pi_C\big)=\big(P(L-u\Pi_C)\big)^3$ for every $u\in[0,a+b+2c]$. Now, using \eqref{equation:beta} and integrating, we obtain the required expression for $\beta_L(\Pi_C)$.
\end{proof}

Hence, if $L$ is ample, then $a$, $b$ and $c$ are positive, so $\beta_L(\Pi_C)<0$ and $(X,L)$ is K-unstable.

\subsection{Family \textnumero 3.24}
\label{subsection:3-24}

Let $X$ be the unique smooth Fano threefold in Family \textnumero 3.24.
Then $X$ can be described as a blow-up $\eta\colon X\to\mathbb{P}^1\times\mathbb{P}^2$ of a smooth curve $C$ of degree~$(1,1)$ or as a blow-up $\pi\colon X\to W$ of the smooth divisor $W\subset \mathbb{P}^2\times\mathbb{P}^2$ along a curve $\ell$ of degree $(0,1)$. We have the following commutative diagram
$$
\xymatrix{
&&\mathbb{P}^1\times\mathbb{P}^2\ar@/_2pc/@{->}[ddll]_{\mathrm{pr}_1}\ar[rr]^{\mathrm{pr}_2}&&\mathbb{P}^{2}\\%
&&X\ar@{->}[u]_{\eta}\ar@{->}[dll]_{\zeta}\ar@{->}[d]^{\phi}\ar@{->}[r]^{\pi}&W\ar@{->}[dr]^{\omega_{1}}\ar@{->}[ur]_{\omega_{2}}&\\%
\mathbb{P}^{1}&&\mathbb{F}_{1}\ar@{->}^{\xi}[ll]\ar@{->}^{\gamma}[rr]&&\mathbb{P}^{2}}
$$
where $\omega_{1}$ and $\omega_2$ are natural $\mathbb{P}^{1}$-bundles,
$\gamma$ is the blow-up of the point $\omega_1(\ell)$, the morphisms $\phi$ and $\xi$ are $\mathbb{P}^{1}$-bundles,
$\zeta$ is a $\mathbb{F}_{1}$-bundle, $\mathrm{pr}_1$ and $\mathrm{pr}_2$ are projections to the first and the second factors, respectively.
This commutative diagram shows that the Mori cone $\overline{\mathrm{NE}}(X)$ is generated by the extremal rays
spanned by the curves contracted by $\pi$, $\eta$, $\phi$.

To describe the nef cone of the threefold $X$, we let $E$ be the $\pi$-exceptional surface, and set $H_1=(\omega_1\circ\pi)^*(\mathcal{O}_{\mathbb{P}^2}(1))$
and $H_2=(\omega_2\circ\pi)^*(\mathcal{O}_{\mathbb{P}^2}(1))$. Then the morphisms $\omega_1\circ\pi$, $\omega_2\circ\pi$ and $\zeta$ are given by the linear systems $|H_1|$, $|H_2|$ and $|H_1-E|$, respectively. This shows that the nef cone of $X$ is generated by the divisors $H_1$, $H_2$ and $H_1-E$. 

Let $L$ be a nef and big $\mathbb{Q}$-divisor on $X$. Then $L\sim_{\mathbb{Q}} aH_1+bH_2+c(H_1-E)$ for some $a,b,c\in\mathbb{Q}_{\geqslant 0}$, and 
$$
\frac{(-K_X)\cdot L^{2}}{L^3}=\frac{2(a^2+4ab+2ac+b^2+3bc)}{3b(a^2+ab+2ac+bc)}.
$$
Moreover, we compute
$$
\beta_{L}(E)=-2ac\frac{2a^2+3ab+b^2+bc}{3(a^2+ab+2ac+bc)^2}.
$$
Indeed, if $u\in\mathbb{R}_{\geqslant 0}$, then $L-uE$ is nef for $u\leqslant a$, and $L-uE$ is not pseudoeffective for $u>a+b$. Moreover, if $u\in[a,a+b]$, then the positive part of the Zariski decomposition of the divisor $L-uE$ is $(a+c)H_1+(a+b-u)H_2-(a+c)E$, and its negative part is $(u-a)S$ for the $\eta$-exceptional surface $S\sim H_2-E$. Now, using \eqref{equation:beta}, we get the required formula for $\beta_L(E)$.

Thus, if $L$ is ample, then $(X,L)$ is K-unstable.

\subsection{Family \textnumero 3.29}
\label{subsection:3-29}

Let $P$ be a point in $\mathbb{P}^3$, let $\phi\colon V_7\to\mathbb{P}^3$ be the blow-up of the point $P$, let $\overline{E}$ be the $\phi$-exceptional surface, let $\ell$ be a line in $\overline{E}\simeq\mathbb{P}^2$, and let $\pi\colon X\to V_7$ be the blow-up of the curve $\ell$ with exceptional surface $F$. Then $X$ is the unique smooth Fano threefold in Family \textnumero 3.29. Let $E$ be the strict transform on $X$ of the surface $\overline{E}$, let $S$ be the strict transform on $X$ of the plane in $\mathbb{P}^3$ that contains $P$ such that its strict transform on $V_7$  contains $\ell$. Then we have the following commutative diagram:
$$
\xymatrix@R=1em{
\mathbb{P}^2& &\mathbb{P}\big(\mathcal{O}_{\mathbb{P}^2}\oplus\mathcal{O}_{\mathbb{P}^2}(2)\big)\ar@{->}[ll]\ar@{->}[rr]^{\psi} & &\mathbb{P}(1,1,1,2) \\
& & & & \\
V_7\ar@{->}[drr]_{\phi}\ar@{->}[uu]& & X\ar@{->}[rr]^{\varphi}\ar@{->}[ll]_{\pi}\ar@{->}[uu]_{\theta} & &\widetilde{\mathbb{P}}^3\ar@{->}[uu]_{\vartheta}\ar@{->}[dll]^{\varpi}\ar@{->}[dll] \\
& &\mathbb{P}^3 & &}
$$
where $\theta$ is a contraction of the surface $S$ to a smooth curve, $\varphi$ is the contraction of the surface $E$ to a quotient singular point of type $\frac{1}{2}(1,1,1)$, $\vartheta$ is a contraction of the surface $\varphi(S)$ to a smooth curve, $\varpi$ is a weighted blow-up of the point $P$ with weights $(1,1,2)$, $\psi$ is the contraction of the surface $\theta(E)$ to the singular point of the threefold $\mathbb{P}(1,1,1,2)$, and both $V_7\to\mathbb{P}^2$ and $\mathbb{P}(\mathcal{O}_{\mathbb{P}^2}\oplus\mathcal{O}_{\mathbb{P}^2}(2))\to\mathbb{P}^2$ are $\mathbb{P}^1$-bundles.

To describe the Mori cone of $X$, let $C_1$ be the fibre of the natural projection $F\to\ell$, let $C_2$ be the strict transform on $X$ of a general line in $\mathbb{P}^3$ that is contained in $\phi\circ\pi(S)$ and passes through $P$, and let $C_3$ be a general line in $E\simeq\mathbb{P}^2$. Then the curves $C_1$, $C_2$, $C_3$ are contracted by $\pi$, $\theta$, $\varphi$, respectively. Thus, these curves generate the Mori cone of $X$.

Let $H$ be the strict transform on $X$ of a general plane in $\mathbb{P}^3$. Then in the diagram above, the morphisms $X\to\mathbb{P}^3$, $X\to\mathbb{P}^2$ and $X\to\mathbb{P}(1,1,1,2)$ are given by the linear systems $|H|$, $|H-E-F|$ and $|2H-E-2F|$, respectively. So, the nef cone of $X$ is generated by  $H$, $H-E-F$, $2H-E-2F$.

Let $L$ be a nef and big $\mathbb{Q}$-divisor on $X$. Then $L\sim_{\mathbb{Q}} aH+b(H-E-F)+c(2H-E-2F)$ for some $a,b,c\in\mathbb{Q}_{\geqslant 0}$. We have
$L^3=a^3+3a^2b+6a^2c+3ab^2+12abc+12ac^2+3b^2c+6bc^2+4c^3$ and
$$
\frac{(-K_X)\cdot L^{2}}{L^3}=2\frac{2a^2+4ab+8ac+b^2+5bc+5c^2}{a^3+3a^2b+6a^2c+3ab^2+12abc+12ac^2+3b^2c+6bc^2+4c^3}.
$$
To compute $\beta_{L}(E)$, take $u\in\mathbb{R}_{\geqslant 0}$ and consider the divisor $L-uE$. If $u\leqslant c$, then $L-uE$ is nef, so
$$
\mathrm{vol}(L-uE)=a^3+3a^2b+6a^2c+3ab^2+12abc+12ac^2+3b^2c-3b^2u+6bc^2-6bu^2+4c^3-4u^3.
$$
Similarly, if $u\in[c,a+c]$, then the negative part of the Zariski decomposition of the divisor $L-uE$ is $(u-c)F$, so its positive part is $(a+b+2c)H-(b+c+u)(E+F)$, which gives
$$
\mathrm{vol}(L-uE)=(a+c-u)(a^2+3ab+5ac+au+3b^2+9bc+3bu+7c^2+4cu+u^2).
$$
Finally, if $u>a+c$, then $L-uE$ is not pseudoeffective, so $\mathrm{vol}(L-uE)=0$. Using \eqref{equation:beta}, we get
$$
\beta_{L}(E)=-\frac{3f(a,b,c)}{2(a^3+3a^2b+6a^2c+3ab^2+12abc+12ac^2+3b^2c+6bc^2+4c^3)^2},
$$
where
\begin{multline*}
\quad \quad f(a,b,c)=3a^4b^2+9a^4bc+9a^4c^2+4a^3b^3+32a^3b^2c+54a^3bc^2+36a^3c^3+\\
+18a^2b^3c+72a^2b^2c^2+80a^2bc^3+40a^2c^4+18ab^3c^2+52ab^2c^3+
40abc^4+16ac^5+ 8b^3c^3+18b^2c^4+12bc^5+4c^6.
\end{multline*}
Hence, if $L$ is ample, then $a$, $b$ and $c$ are positive, so $\beta_L(E)<0$, and $(X,L)$ is K-unstable.

\subsection{Family \textnumero 4.8}
\label{subsection:4-8}
 This threefold can be described in two different geometric ways. First, as a blow-up of $\mathbb{P}^1\times\mathbb{P}^1\times\mathbb{P}^1$ along a smooth irreducible curve of degree $(0,1,1)$. Second, as a blow-up of the quadric cone $Q\subset\mathbb{P}^4$ at the disjoint union of its vertex and a smooth conic. To describe the geometry of $X$, let $\mathrm{pr}_i\colon \mathbb{P}^1\times\mathbb{P}^1\times\mathbb{P}^1\to\mathbb{P}^1$ be~the~projection to the~$i$-th factor, let $F_i$ be a~fibre of this~projection,
let $C$ be a~smooth curve of degree $(1,1)$ in $F_1\cong\mathbb{P}^1\times\mathbb{P}^1$, let $\pi\colon X\to \mathbb{P}^1\times\mathbb{P}^1\times\mathbb{P}^1$ be the blow-up of this curve, let $E$ be the $\pi$-exceptional surface, let $S$ be the strict transform on $X$ of the surface $F_1$, and let $R$ be the strict transform on $X$ of the surface in $|F_2+F_3|$ that contains $C$. Then $S\simeq R\simeq \mathbb{P}^1\times\mathbb{P}^1$, the surfaces $S$ and $R$ are disjoint, $S\vert_{S}$ is a divisor of degree $(-1,-1)$, and $R\vert_{R}$ is a divisor of degree $(-1,2)$. Moreover, we have the following commutative diagram:
$$
\xymatrix{
&&&&\mathbb{P}^1\times\mathbb{P}^1\ar[rrrrdd]\ar[lllldd]&&&&\\
&&&&Q\ar@{-->}[u]&&&&\\
\mathbb{P}^1&&Y_{2}\ar[ll]_{\omega_{2}}\ar[urr]&&Y\ar@/^2pc/[uu]^{\eta}\ar[u]_{\varphi}\ar[ll]_{\varphi_{2}}\ar[rr]^{\varphi_{3}}&&Y_{3}\ar[ull]\ar[rr]^{\omega_{3}}&&\mathbb{P}^1\\
&&X_{2}\ar@/_30pt/[dddrr]\ar[u]^{\theta_2}&&X\ar@/_55pt/[ddd]_{\phi}\ar[ll]_{\phi_{2}}\ar[rr]^{\phi_{3}}\ar[d]^{\pi}\ar[u]^{\theta}&&X_{3}\ar@/^30pt/[dddll]\ar[u]_{\theta_3}&&\\
&&&&\mathbb{P}^1\times\mathbb{P}^1\times\mathbb{P}^1\ar[d]_{\mathrm{pr}_1}\ar@/^6pc/[uullll]^{\mathrm{pr}_2}\ar@/_6pc/[uurrrr]_{\mathrm{pr}_3}&& &&\\
&&&&\mathbb{P}^1&&&&\\
&&&&V\ar[u]^{\upsilon}\ar@/_40pt/[uuuuu]_{\vartheta}&&&&}
$$
where $\phi$ is a contraction of the surface $S$ to an ordinary double point of the threefold $V$, $\phi_2$ and $\phi_3$ are contractions of the surface $S$ to smooth curves such that $\phi_3\circ\phi_2^{-1}$ is an Atiyah flop, both morphisms $X_2\to V$ and $X_3\to V$ are small resolutions of the threefold $V$, $\theta$, $\theta_2$, $\theta_3$ are contractions to smooth curves of the surfaces $R$, $\phi_2(R)$, $\phi_3(R)$, respectively, $\varphi$ is a contraction of the surface $\theta(S)$ to the vertex of the quadric cone $Q$, $\varphi_2$ and $\varphi_3$ are contractions of the surface $\theta(S)$ to smooth curves such that $\varphi_3\circ\varphi_2^{-1}$ is an Atiyah flop, both morphisms $Y_2\to Q$ and $Y_3\to Q$ are small resolutions of the quadric cone $Q$, $\eta$ is a $\mathbb{P}^1$-bundle, $\omega_2$ and $\omega_3$ are $\mathbb{P}^2$-bundles, $\vartheta$ is the contraction of the surface $\phi(R)$ to a smooth conic in $Q$, $\upsilon$ is a fibration into quadric surfaces, and both $\mathbb{P}^1\times\mathbb{P}^1\to\mathbb{P}^1$ are natural projections.
Using this commutative diagram, we see that the Mori cone of the threefold $X$ is simplicial \cite{Matsuki}, and contractions of its extremal rays are given by birational morphisms $\pi$, $\phi_2$, $\phi_3$ and $\theta$. Namely, let $\ell_1$ be an irreducible curve in $E$ contracted by $\pi$, let $\ell_2$ and $\ell_3$ be irreducible curves in $S$ that are contracted by $\phi_2$ and $\phi_3$, respectively, and let $\ell_4$ be an irreducible curve contracted by $\theta$. Then $\ell_1$, $\ell_2$, $\ell_3$, $\ell_4$ generate the Mori cone of $X$.

To describe the nef cone of $X$, we let $H_i=\pi^*(F_i)$. Then the nef cone of $X$ is generated by $H_1$, $H_2$, $H_3$ and $H_1+H_2+H_3-E$. Note that $|H_1+H_2+H_3-E|$ is base point free and gives the morphism $\varphi\circ\theta\colon X\to Q$ described above. Note also that $S\sim H_1-E$ and $R\sim H_2+H_3-E$.

Let $L$ be a nef and big $\mathbb{Q}$-divisor on $X$. Then there are $a_1,a_2,a_3,a_4\in\mathbb{Q}_{\geqslant 0}$ such that
$$
L\sim_{\mathbb{Q}} a_1H_1+a_2H_2+a_3H_3+a_4(H_1+H_2+H_3-E).
$$
We have
$L^3=6a_1a_2a_3+6a_1a_2a_4+6a_1a_3a_4+6a_1a_4^2+6a_2a_3a_4+3a_2a_4^2+3a_3a_4^2+2a_4^3$,
and
$$
\frac{(-K_X)\cdot L^{2}}{L^3}=2\frac{2a_1a_2+2a_1a_3+4a_1a_4+2a_2a_3+3a_2a_4+3a_3a_4+3a_4^2}{6a_1a_2a_3+6a_1a_2a_4+6a_1a_3a_4+6a_1a_4^2+6a_2a_3a_4+3a_2a_4^2+3a_3a_4^2+2a_4^3}.
$$
Set $g(a_1,a_2,a_3,a_4)=(6a_1a_2a_3+6a_1a_2a_4+6a_1a_3a_4+6a_1a_4^2+6a_2a_3a_4+3a_2a_4^2+3a_3a_4^2+2a_4^3)^2$.

\begin{lemma}
\label{lemma:4-8}
One has
$$
\beta_{L}(S)=-a_4\frac{f_1(a_1,a_2,a_3,a_4)}{g(a_1,a_2,a_3,a_4)},
$$
where
\begin{multline*}
f_1(a_1,a_2,a_3,a_4)=18a_1^2a_2^2a_3+18a_1^2a_2a_3^2+54a_1^2a_2a_3a_4+12a_1^2a_2a_4^2+12a_1^2a_3a_4^2+\\
+6a_1^2a_4^3+18a_1a_2^2a_3a_4-6a_1a_2^2a_4^2+18a_1a_2a_3^2a_4+48a_1a_2a_3a_4^2-6a_1a_3^2a_4^2+\\
+12a_2^2a_3a_4^2+12a_2a_3^2a_4^2+24a_2a_3a_4^3+3a_2a_4^4+3a_3a_4^4+a_4^5.
\end{multline*}
Moreover, if $a_2\leqslant a_3$, then
$$
\beta_{L}(E)=-\frac{f_2(a_1,a_2,a_3,a_4)}{g(a_1,a_2,a_3,a_4)},
$$
where
\begin{multline*}
f_2(a_1,a_2,a_3,a_4)=12a_1^2a_2^4-24a_1^2a_2^3a_3+24a_1^2a_2^3a_4-54a_1^2a_2^2a_3a_4+18a_1^2a_2a_3^2a_4-\\
-18a_1^2a_2a_3a_4^2-12a_1^2a_2a_4^3-12a_1^2a_3a_4^3-6a_1^2a_4^4+18a_1a_2^4a_4-36a_1a_2^3a_3a_4+30a_1a_2^3a_4^2+\\
+18a_1a_2^2a_3^2a_4-54a_1a_2^2a_3a_4^2+6a_1a_2^2a_4^3+36a_1a_2a_3^2a_4^2+6a_1a_3^2a_4^3+12a_2^4a_4^2-\\
-24a_2^3a_3a_4^2+14a_2^3a_4^3-36a_2^2a_3a_4^3+6a_2a_3^2a_4^3-12a_2a_3a_4^4-3a_2a_4^5-3a_3a_4^5-a_4^6.
\end{multline*}
\end{lemma}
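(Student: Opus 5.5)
The plan is to compute both invariants directly from \eqref{equation:beta}, with $n=3$, $A_X(S)=A_X(E)=1$ and $\mu=(-K_X)\cdot L^2/L^3$ as computed above. Write $L_t=L+tK_X$ with $-K_X\sim 2H_1+2H_2+2H_3-E$. Then $L_t\sim_{\mathbb{Q}}a_1(t)H_1+\dots+a_4(t)(H_1+H_2+H_3-E)$, where
\[
a_4(t)=a_4-t,\qquad a_1(t)=a_1-t,\qquad a_2(t)=a_2-t,\qquad a_3(t)=a_3-t.
\]
So the $t$-derivative term is just the derivative of the volume function along the direction $(-1,-1,-1,-1)$ in these coordinates. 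It therefore suffices to find $\mathrm{vol}(L-uS)$ and $\mathrm{vol}(L-uE)$ as piecewise polynomials in $(a_1,\dots,a_4,u)$, valid on an open neighbourhood of the nef cone. Differentiating at $t=0$ and integrating then gives $\beta$. Intersection numbers come from $H_iH_jH_k=1$ for distinct $i,j,k$, $H_i^2=0$, $E\cdot H_1^2=0$, $E^2\cdot H_1=0$, $E^2\cdot H_2=E^2\cdot H_3=-1$, $E^3=-\deg N_{C}=-2$. These reproduce the displayed formula for $L^3$, which serves as a check.

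\textbf{Zariski decompositions for $S$.} Use $S\sim H_1-E$, and the intersections of $S$ with the Mori cone generators. One has $S\cdot\ell_1=1$ and $S\cdot\ell_2=S\cdot\ell_3=-1$ (the normal bundle is $(-1,-1)$). Also $S\cdot\ell_4=0$, since $R\cap S=\varnothing$. Moreover $L\cdot\ell_1=a_4$, $L\cdot\ell_2=a_2$, $L\cdot\ell_3=a_3$ after labelling, and $L\cdot\ell_4=a_1$. Hence $L-uS$ is nef for $u\le a_4$. For $u>a_4$ the curves $\ell_1$ sweeping $E$ become negative, so $E$ enters the negative part. I expect $P(L-uS)=L-uS-(u-a_4)E$, which equals $(a_1+a_4-u)H_1+(a_2+a_4)H_2+(a_3+a_4)H_3$ up to the $E$-coefficient bookkeeping. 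This is a pullback from $(\mathbb{P}^1)^3$, so its volume is $6(a_1+a_4-u)(a_2+a_4)(a_3+a_4)$ until $u=a_1+a_4$, the pseudoeffective threshold. Integrating the two polynomial pieces yields $-a_4f_1/g$. The factor $a_4$ appears because $\beta$ vanishes on the face $a_4=0$, where $L$ is pulled back from $(\mathbb{P}^1)^3$.

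\textbf{Zariski decompositions for $E$.} For $\beta_L(E)$ the structure is richer, and this is where the hypothesis $a_2\le a_3$ enters. $L-uE$ is nef as long as it stays nonnegative on $\ell_2,\ell_3,\ell_4$. One has $E\cdot\ell_2=E\cdot\ell_3=1$, so those intersections are $a_2-u$ and $a_3-u$, and $E\cdot\ell_4$ is positive. Thus for $u\in[0,a_2]$ the divisor is nef. Past $a_2$ the $\phi_2$-curves ruling $S$ go negative, and $S$ is entered in the negative part with coefficient $(u-a_2)$. Then $R$ is entered once its rulings $\ell_4$ become negative, up to the pseudoeffective threshold read off from writing $L$ in terms of $S,R,E$. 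The ordering $a_2\le a_3$ fixes which ruling of $S$ hits zero first, and hence which chamber sequence occurs. On each chamber I would compute the positive part's self-intersection cubed by the intersection table above. I would also check that the positive part is nef via the four curves.

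\textbf{Main obstacle.} The main obstacle is correctly determining the chamber structure for $E$. One must find the break points, which are likely $u=a_2$, then a point where the coefficient of $S$ must jump because $P\cdot\ell_3$ hits zero, then $R$, and verify at each step that the candidate positive part is nef and orthogonal to the negative part. After that, the integrations and the $t$-derivative are routine symbolic computations, which I would carry out with computer algebra and compare against $f_1$ and $f_2$.
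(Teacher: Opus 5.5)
Your overall route is the paper's: explicit Zariski decompositions, cubing the positive parts, and integrating in \eqref{equation:beta}. Your treatment of $S$ matches the paper exactly: $L-uS$ is nef for $u\leqslant a_4$. After that the negative part is $(u-a_4)E$ and the positive part is precisely $(a_1+a_4-u)H_1+(a_2+a_4)H_2+(a_3+a_4)H_3$, up to the threshold $u=a_1+a_4$.

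The gap is in the chamber structure for $E$. You record $L\cdot\ell_4=a_1$ and $E\cdot\ell_4>0$; in fact $E\cdot\ell_4=1$, which gives $(L-uE)\cdot\ell_4=a_1-u$. So $L-uE$ is nef only for $u\leqslant\min(a_1,a_2)$, not on all of $[0,a_2]$. Your sequence (nef, then $S$ enters at $u=a_2$, then $R$ at $u=a_1$) covers only the paper's case $a_1\geqslant a_2$. When $a_1<a_2$ the order is reversed. On $[a_1,a_2]$ the negative part is $(u-a_1)R$, the positive part is $(a_1+a_4)(H_1-E)+(a_1+a_2+a_4-u)H_2+(a_1+a_3+a_4-u)H_3$, and $S$ joins only at $u=a_2$. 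The decomposition you assert is therefore false on $(a_1,a_2]$ in that case.

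The extra break point you anticipate, where ``$P\cdot\ell_3$ hits zero'', also does not occur. Once $(u-a_2)S$ lies in the negative part, $P\cdot\ell_3=a_3-a_2\geqslant 0$ for every $u$. The only break points are $\min(a_1,a_2)$, $\max(a_1,a_2)$ and the threshold $a_1+a_2+a_4$. The final positive part is $(a_1+a_2+a_4-u)(H_1+H_2-E)+(a_1+a_3+a_4-u)H_3$.

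There are two ways to repair this. You can treat the two orderings of $a_1$ and $a_2$ separately, as the paper does. Alternatively, use $R\cap S=\varnothing$: every mixed term contains $R\cdot S=0$, so $(D-xR-yS)^3=(D-xR)^3+(D-yS)^3-D^3$ for every divisor $D$ and all $x,y$. Write $V_0,V_R,V_S,V_{RS}$ for the cubes of $L-uE$, $L-uE-(u-a_1)R$, $L-uE-(u-a_2)S$ and $L-uE-(u-a_1)R-(u-a_2)S$. The identity says $V_0-V_R-V_S+V_{RS}\equiv 0$ as a polynomial, and likewise after replacing each $a_i$ by $a_i-t$. Hence both chamber orders give the same integrals in \eqref{equation:beta}.

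This is exactly why the formula $-f_2/g$ needs only $a_2\leqslant a_3$ and no condition relating $a_1$ and $a_2$. With this observation in place, a single formal computation along your chamber sequence does yield the stated polynomial.
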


\begin{proof}
For $u\in[0,a_1+a_4]$, the positive part of the Zariski decomposition of the divisor $L-uS$ is
$$
P\big(L-uS\big)=\left\{\aligned
&(a_1+a_4-u)H_1+(a_2+a_4)H_2+(a_3+a_4)H_3-(a_4-u)E\ \text{if $u\in[0,a_4]$},\\
&(a_1+a_4-u)H_1+(a_2+a_4)H_2+(a_3+a_4)H_3\ \text{if $u\in[a_4,a_1+a_4]$},
\endaligned
\right.
$$
and its negative part is
$$
N\big(L-uS\big)=\left\{\aligned
&0\ \text{if $u\in[0,a_4]$},\\
&(u-a_4)E\ \text{if $u\in[a_4,a_1+a_4]$}.
\endaligned
\right.
$$
Then $\mathrm{vol}(L-uS)=(P(L-uS))^3$ for every $u\in[0,a_1+a_4]$.
For $u>a_1+a_4$, the divisor $L-uS$ is not pseudoeffective, so $\mathrm{vol}(L-uS)=0$.
Now, using \eqref{equation:beta} we obtain the expression for $\beta_L(S)$.

Similarly, if $u\geqslant a_1+a_2+a_4$, the divisor $L-uE$ is not big, so $\mathrm{vol}(L-uE)=0$.
If $a_1\leqslant a_2\leqslant a_3$ and $u\in[0,a_1+a_2+a_4]$, the positive part of the Zariski decomposition of the divisor $L-uE$ is
$$
P\big(L-uE\big)=\left\{\aligned
&(a_1+a_4)H_1+(a_2+a_4)H_2+(a_3+a_4)H_3-(a_4+u)E\ \text{if $u\in[0,a_1]$},\\
&(a_1+a_4)(H_1-E)+(a_1+a_2+a_4-u)H_2+(a_1+a_3+a_4-u)H_3\ \text{if $u\in[a_1,a_2]$},\\
&(a_1+a_2+a_4-u)(H_1+H_2-E)+(a_1+a_3+a_4-u)H_3\ \text{if $u\in[a_2,a_1+a_2+a_4]$},
\endaligned
\right.
$$
and its negative part is
$$
N\big(L-uE\big)=\left\{\aligned
&0\ \text{if $u\in[0,a_1]$},\\
&(u-a_1)R\ \text{if $u\in[a_1,a_2]$},\\
&(u-a_1)R+(u-a_2)S\ \text{if $u\in[a_2,a_1+a_2+a_4]$}.
\endaligned
\right.
$$
If $a_1\geqslant a_2$, $a_2\leqslant a_3$ and $u\in[0,a_1+a_2+a_4]$, the positive part of the Zariski decomposition of the divisor $L-uE$ is
$$
P\big(L-uE\big)=\left\{\aligned
&(a_1+a_4)H_1+(a_2+a_4)H_2+(a_3+a_4)H_3-(a_4+u)E\ \text{if $u\in[0,a_2]$},\\
&(a_1+a_2+a_4-u)H_1+(a_2+a_4)H_2+(a_3+a_4)H_3-(a_2+a_4)E\ \text{if $u\in[a_2,a_1]$},\\
&(a_1+a_2+a_4-u)(H_1+H_2-E)+(a_1+a_3+a_4-u)H_3\ \text{if $u\in[a_1,a_1+a_2+a_4]$},
\endaligned
\right.
$$
and its negative part is
$$
N\big(L-uE\big)=\left\{\aligned
&0\ \text{if $u\in[0,a_2]$},\\
&(u-a_2)S\ \text{if $u\in[a_2,a_1]$},\\
&(u-a_1)R+(u-a_2)S\ \text{if $u\in[a_1,a_1+a_2+a_4]$}.
\endaligned
\right.
$$
Now, using \eqref{equation:beta} we obtain the required expression for $\beta_L(E)$.
\end{proof}

\begin{lemma}
\label{lemma:4-8-unstable}
Suppose that $L$ is ample. Then $\beta_L(S)<0$ or $\beta_L(E)<0$.
\end{lemma}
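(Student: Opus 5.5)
The plan mirrors the argument for Lemma~\ref{lemma:3-18-unstable}. Since $L$ is ample, all $a_1,a_2,a_3,a_4$ are positive. The surfaces $R$ and $S$ and the curve $C$ are symmetric under swapping the second and third factors of $(\mathbb{P}^1)^3$, because $C$ has degree $(1,1)$ in $F_1$. So we may swap $H_2\leftrightarrow H_3$, and hence $a_2\leftrightarrow a_3$, and assume $a_2\leqslant a_3$. The formula for $\beta_L(E)$ in Lemma~\ref{lemma:4-8} is then available. Since $g>0$ and $a_4>0$, it suffices to show that $f_1$ and $f_2$ cannot both be non-positive.

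First I would look at $f_1$ on its own. All of its monomials are positive except $-6a_1a_2^2a_4^2$ and $-6a_1a_3^2a_4^2$. I would absorb them using the positive terms:
\[
18a_1a_2^2a_3a_4+18a_1a_2a_3^2a_4+48a_1a_2a_3a_4^2+12a_1^2a_2a_4^2+12a_1^2a_3a_4^2+6a_1^2a_4^3+\cdots.
\]
For example, $6a_1a_3^2a_4^2$ is dominated via AM--GM by combinations such as $12a_1^2a_3a_4^2$, $12a_2a_3^2a_4^2$ and $18a_1a_2a_3^2a_4$. However, no single term controls the regime where $a_3$ is large and $a_1,a_2,a_4$ are small. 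So $f_1>0$ need not hold everywhere, and $\beta_L(E)$ is genuinely needed there.

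The main step is to imitate Lemma~\ref{lemma:3-18-unstable}. Assume $f_1\leqslant 0$ and $f_2\leqslant 0$, and look for a positive polynomial combination $\lambda f_1+f_2$, with $\lambda$ a monomial or a small polynomial in the $a_i$ (I would first try $\lambda=a_4$ or $\lambda=a_2$). The combination should factor as a positive monomial times a polynomial $\Delta$ that can be shown positive. This would contradict $\lambda f_1+f_2\leqslant 0$. The natural first attempt is to match the most negative terms of $f_2$, namely $-a_4^6$, $-3a_2a_4^5$, $-3a_3a_4^5$ and $-6a_1^2a_4^4$, against $a_4f_1$. Its terms $a_4^6$, $3a_2a_4^5$, $3a_3a_4^5$ and $6a_1^2a_4^4$ cancel these exactly, so $a_4f_1+f_2$ looks like the intended combination. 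After cancellation, the remaining polynomial should be divisible by a common factor, as happened with $2c^2\Delta$ for Family~\textnumero~3.18.

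Positivity of the resulting $\Delta$ on the region $a_2\leqslant a_3$ would then be checked by a case split. Set $a_3=a_2+\epsilon$ with $\epsilon\geqslant 0$, and split further according to whether $a_1\geqslant a_2$ or $a_1\leqslant a_2$, writing the larger variable as the smaller one plus a non-negative slack. This matches the two Zariski-chamber cases already present in Lemma~\ref{lemma:4-8}. In each case I would substitute and expand, hoping all coefficients come out non-negative with at least one strictly positive term. If in some sub-case $\Delta$ does not become manifestly positive, I would instead substitute into $f_1$ directly, as done for the case $c\geqslant a$ in Lemma~\ref{lemma:3-18-unstable}.

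The main obstacle is finding the right multiplier $\lambda$ and the right case decomposition so that every expansion has only non-negative coefficients. I expect this to require some trial with computer algebra, since $f_2$ has many mixed-sign terms such as $-24a_1^2a_2^3a_3$ and $-54a_1^2a_2^2a_3a_4$ that must be absorbed.
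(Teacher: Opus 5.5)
Your framework matches the paper's. You reduce to $a_2\leqslant a_3$ and handle the chamber $a_1\leqslant a_2$ by $f_1$ alone, as in your fallback; there both negative monomials are absorbed, since $6a_1a_3^2a_4^2\leqslant 6a_2a_3^2a_4^2$ and $6a_1a_2^2a_4^2\leqslant 6a_2^2a_3a_4^2$. Elsewhere you look for some $\lambda\geqslant 0$ such that $\lambda f_1+f_2$ is coefficientwise non-negative in ordered slack variables. The trouble is that the whole content of the lemma is the choice of $\lambda$. You leave it to trial and error, and both candidates you name fail.

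\textbf{The case $\lambda=a_4$.} You are right that $a_4f_1$ cancels every monomial of $f_2$ not involving $a_2$, so $a_4f_1+f_2=a_2\Delta$ for some polynomial $\Delta$. However, at $a_4=0$ one gets $a_4f_1+f_2=f_2=12a_1^2a_2^3(a_2-2a_3)$, which is negative whenever $a_2\leqslant a_3$. So the combination tends to $-12$ at $a_1=a_2=a_3=1$, $a_4\to 0^+$.

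\textbf{The case $\lambda=a_2$.} As $a_2\to 0^+$ one gets $a_2f_1+f_2\to a_4^3(6a_1a_3^2-12a_1^2a_3-6a_1^2a_4-3a_3a_4^2-a_4^3)$, which equals $-16$ at $a_1=a_3=a_4=1$.

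\textbf{The fallback to $f_1$.} This does not rescue the piece $a_1\geqslant a_2$ either, because $f_1$ is negative there too. For example, $a_2\to 0^+$, $a_1=a_4=1$, $a_3=10$ gives $f_1\to -443$.

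So on the piece $\{a_1\geqslant a_2,\ a_3\geqslant a_2\}$ of your case split, each of $f_1$, $a_4f_1+f_2$ and $a_2f_1+f_2$ takes negative values. Hence no slack-variable expansion of any of them can have non-negative coefficients there.

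The missing idea is to use both multipliers together. The paper takes $\lambda=a_2+a_4$, i.e.\ it works with $(a_2+a_4)f_1+f_2=a_2(f_1+\Delta)$. It shows this is positive on $a_1>a_2$ by expanding in slacks on the two sub-chambers $a_2<a_1\leqslant a_3$ and $a_1>a_3$. With this $\lambda$, your coarser substitution $a_1=a_2+x$, $a_3=a_2+y$ would in fact already suffice: re-expanding the paper's two expressions gives a polynomial in $a_2,a_4,x,y$ with only non-negative coefficients. Without identifying a valid $\lambda$, however, what you have is a search strategy rather than a proof of the lemma.
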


\begin{proof}
Without loss of generality, we may assume that $a_2\leqslant a_3$. Since $L$ is ample, all $a_1$, $a_2$, $a_3$, $a_4$ are positive. Suppose that $\beta_L(S)\geqslant 0$ and $\beta_L(E)\geqslant 0$. Then, in the notations of Lemma~\ref{lemma:4-8}, we have $f_1(a_1,a_2,a_3,a_4)\leqslant 0$ and $f_2(a_1,a_2,a_3,a_4)\leqslant 0$. However, if $a_1\leqslant a_2$, then
\begin{multline*}
0\geqslant f_1(a_1,a_2,a_3,a_4)=36a_1^5+90a_1^4a_4+54a_1^4\delta_2+108a_1^4\delta_1+84a_1^3a_4^2+108a_1^3a_4\delta_2+216a_1^3a_4\delta_1+\\
+18a_1^3\delta_2^2+108a_1^3\delta_2\delta_1+108a_1^3\delta_1^2+30a_1^2a_4^3+84a_1^2a_4^2\delta_2+168a_1^2a_4^2\delta_1+\\
+18a_1^2a_4\delta_2^2+162a_1^2a_4\delta_2\delta_1+162a_1^2a_4\delta_1^2+18a_1^2\delta_2^2\delta_1+54a_1^2\delta_2\delta_1^2+\\
+36a_1^2\delta_1^3+6a_1a_4^4+24a_1a_4^3\delta_2+48a_1a_4^3\delta_1+6a_1a_4^2\delta_2^2+108a_1a_4^2\delta_2\delta_1+108a_1a_4^2\delta_1^2+\\
+18a_1a_4\delta_2^2\delta_1+54a_1a_4\delta_2\delta_1^2+36a_1a_4\delta_1^3+a_4^5+3a_4^4\delta_2+6a_4^4\delta_1+\\
+24a_4^3\delta_2\delta_1+24a_4^3\delta_1^2+ 12a_4^2\delta_2^2\delta_1+36a_4^2\delta_2\delta_1^2+24a_4^2\delta_1^3>0,
\end{multline*}
where $\delta_1=a_2-a_1$ and $\delta_2=a_3-a_2$. Thus, we see that $a_1>a_2$. If $a_1\leqslant a_3$, then
\begin{multline*}
0\geqslant (a_2+a_4)f_1(a_1,a_2,a_3,a_4)+f_2(a_1,a_2,a_3,a_4)=24a_2^6+114a_2^5a_4+30a_2^5\epsilon_2+78a_2^5\epsilon_1+\\
+156a_2^4a_4^2+144a_2^4a_4\epsilon_2+336a_2^4a_4\epsilon_1+18a_2^4\epsilon_2^2+96a_2^4\epsilon_2\epsilon_1+102a_2^4\epsilon_1^2+86a_2^3a_4^3+\\
+168a_2^3a_4^2\epsilon_2+372a_2^3a_4^2\epsilon_1+72a_2^3a_4\epsilon_2^2+378a_2^3a_4\epsilon_2\epsilon_1+384a_2^3a_4\epsilon_1^2+36a_2^3\epsilon_2^2\epsilon_1+\\
+102a_2^3\epsilon_2\epsilon_1^2+66a_2^3\epsilon_1^3+18a_2^2a_4^4+84a_2^2a_4^3\epsilon_2+144a_2^2a_4^3\epsilon_1+60a_2^2a_4^2\epsilon_2^2+324a_2^2a_4^2\epsilon_2\epsilon_1+\\
+324a_2^2a_4^2\epsilon_1^2+108a_2^2a_4\epsilon_2^2\epsilon_1+306a_2^2a_4\epsilon_2\epsilon_1^2+198a_2^2a_4\epsilon_1^3+18a_2^2\epsilon_2^2\epsilon_1^2+36a_2^2\epsilon_2\epsilon_1^3+\\
+18a_2^2\epsilon_1^4+a_2a_4^5+15a_2a_4^4\epsilon_2+15a_2a_4^4\epsilon_1+18a_2a_4^3\epsilon_2^2+84a_2a_4^3\epsilon_2\epsilon_1+72a_2a_4^3\epsilon_1^2+48a_2a_4^2\epsilon_2^2\epsilon_1+\\
+144a_2a_4^2\epsilon_2\epsilon_1^2+96a_2a_4^2\epsilon_1^3+36a_2a_4\epsilon_2^2\epsilon_1^2+72a_2a_4\epsilon_2\epsilon_1^3+36a_2a_4\epsilon_1^4>0,
\end{multline*}
where $\epsilon_1=a_1-a_2$ and $\epsilon_2=a_3-a_1$. Thus, we see that $a_1>a_3$. Then
\begin{multline*}
0\geqslant (a_2+a_4)f_1(a_1,a_2,a_3,a_4)+f_2(a_1,a_2,a_3,a_4)=24a_2^6+114a_2^5a_4+48a_2^5\varepsilon_1+78a_2^5\varepsilon_2+\\
+156a_2^4a_4^2+192a_2^4a_4\varepsilon_1+336a_2^4a_4\varepsilon_2+24a_2^4\varepsilon_1^2+108a_2^4\varepsilon_1\varepsilon_2+102a_2^4\varepsilon_2^2+86a_2^3a_4^3+204a_2^3a_4^2\varepsilon_1+\\
+372a_2^3a_4^2\varepsilon_2+78a_2^3a_4\varepsilon_1^2+390a_2^3a_4\varepsilon_1\varepsilon_2+384a_2^3a_4\varepsilon_2^2+30a_2^3\varepsilon_1^2\varepsilon_2+96a_2^3\varepsilon_1\varepsilon_2^2+66a_2^3\varepsilon_2^3+\\
+18a_2^2a_4^4+60a_2^2a_4^3\varepsilon_1+144a_2^2a_4^3\varepsilon_2+60a_2^2a_4^2\varepsilon_1^2+324a_2^2a_4^2\varepsilon_1\varepsilon_2+324a_2^2a_4^2\varepsilon_2^2+90a_2^2a_4\varepsilon_1^2\varepsilon_2+288a_2^2a_4\varepsilon_1\varepsilon_2^2+\\
+198a_2^2a_4\varepsilon_2^3+18a_2^2\varepsilon_1^2\varepsilon_2^2+36a_2^2\varepsilon_1\varepsilon_2^3+18a_2^2\varepsilon_2^4+a_2a_4^5+15a_2a_4^4\varepsilon_2+6a_2a_4^3\varepsilon_1^2+60a_2a_4^3\varepsilon_1\varepsilon_2+\\
+72a_2a_4^3\varepsilon_2^2+48a_2a_4^2\varepsilon_1^2\varepsilon_2+144a_2a_4^2\varepsilon_1\varepsilon_2^2+96a_2a_4^2\varepsilon_2^3+36a_2a_4\varepsilon_1^2\varepsilon_2^2+ 72a_2a_4\varepsilon_1\varepsilon_2^3+36a_2a_4\varepsilon_2^4>0,
\end{multline*}
where $\varepsilon_1=a_1-a_3$ and $\varepsilon_2=a_3-a_2$. This is a contradiction.
\end{proof}

Thus, if $L$ is ample, then $(X,L)$ is K-unstable.

\subsection{Family \textnumero 4.9}
\label{subsection:4-9}

Let $\ell_1$ and $\ell_2$ be two skew lines in $\mathbb{P}^3$, let $P$ be a point in $\ell_1$, let $h\colon V_7\to\mathbb{P}^3$ be the blow-up of the point $P$, and let $f\colon X\to V_7$ be the blow-up of the strict transforms of the lines $\ell_1$ and $\ell_2$. Then $X$ is the unique smooth Fano threefold in Family \textnumero 4.9. To describe the cone of nef divisors of the threefold $X$, let $F_1$ and $F_2$ be $f$-exceptional surfaces such that $h\circ f(F_1)=\ell_1$ and $h\circ f(F_2)=\ell_2$, let $E$ be the strict transform on $X$ of the $h$-exceptional surface, and let $S$ be the strict transform on $X$ of the plane in $\mathbb{P}^3$ that contains $P$ and $\ell_2$, and let $H$ be the strict transform on $X$ of a general plane in $\mathbb{P}^3$. Then the nef cone of $X$ is generated by the divisors $H$, $H-E$, $H-E-F_1$ and $H-F_2$. Note that $-K_X\sim 4H-2E-F_1-F_2$.

To describe the Mori cone of the threefold $X$, let $C_1$ be a smooth irreducible curve in $E\simeq\mathbb{F}_1$ such that $C_1^2=0$, let $C_2$ be a general fibre of the natural projection $F_1\to\ell_1$, let $C_3$ be a general fibre of the natural projection $F_2\to\ell_2$, and let $C_4$ be the strict transform on $X$ of the general line in $\mathbb{P}^3$ that contains $P$ and intersects $\ell_2$. Then the Mori cone of the threefold $X$ is generated by the curves $C_1$, $C_2$, $C_3$, $C_4$. Note that $C_4\subset S$.

Now, let $L$ be a nef and big $\mathbb{Q}$-divisor on $X$. Then
$$
L\sim_{\mathbb{Q}} aH+b(H-E)+c(H-F_1-E)+d(H-F_2)
$$
for some non-negative rational numbers $a$, $b$, $c$, $d$. We have
$$
L^3=a^3+3a^2b+3a^2c+3a^2d+3ab^2+6abc+6abd+6acd+3b^2d+6bcd
$$
and
$$
\frac{(-K_X)\cdot L^{2}}{L^3}=2\frac{2a^2+4ab+3ac+3ad+b^2+2bc+3bd+2cd}{a^3+3a^2b+3a^2c+3a^2d+3ab^2+6abc+6abd+6acd+3b^2d+6bcd}.
$$
Now, using \eqref{equation:beta} we compute $\beta_L(E)$, $\beta_L(S)$, $\beta_L(F_1)$, $\beta_L(F_2)$ as follows. Set
$$
g(a,b,c,d)=2(a^3+3a^2b+3a^2c+3a^2d+3ab^2+6abc+6abd+6acd+3b^2d+6bcd)^2.
$$
Then
$$
\beta_{L}(E)=-3a\frac{f_1(a,b,c,d)}{g(a,b,c,d)}
$$
\begin{multline*}
\text{with }f_1(a,b,c,d)=a^4c-a^4d+3a^3b^2+8a^3bc-5a^3bd+4a^2b^3+12a^2b^2c
+2a^2b^2d+8a^2bc^2+\\12a^2bcd-4a^2c^2d+4a^2cd^2+6ab^3d+18ab^2cd+6ab^2d^2
+12abc^2d+18abcd^2+6b^3d^2+18b^2cd^2+12bc^2d^2.
\end{multline*}
Similarly, we have
$$
\beta_{L}(S)=-\frac{f_{2}(a,b,c,d)}{g(a,b,c,d)}
$$
\begin{multline*}
\text{where }f_2(a,b,c,d)=-3a^5c+3a^5d+3a^4b^2-12a^4bc+15a^4bd+4a^3b^3-24a^3b^2c+\\
+42a^3b^2d+12a^3bc^2+12a^3bcd+12a^3c^2d-12a^3cd^2-24a^2b^3c+42a^2b^3d+18a^2b^2cd+\\
+18a^2b^2d^2+72a^2bc^2d-18a^2bcd^2+12ab^4d+18ab^3d^2+36ab^2c^2d-18ab^2cd^2+36abc^2d^2-24b^3cd^2.
\end{multline*}
Finally, we have
$$
\beta_{L}(F_1)=-\beta_{L}(F_2)=\frac{f_{3}(a,b,c,d)}{g(a,b,c,d)}
$$
\begin{multline*}
\text{where }f_3(a,b,c,d)=6a^5c-6a^5d+6a^4b^2+36a^4bc-30a^4bd+8a^3b^3+60a^3b^2c-36a^3b^2d+\\
+12a^3bc^2+24a^3bcd-24a^3c^2d+24a^3cd^2+24a^2b^3c-24a^2b^3d+36a^2b^2cd-36a^2bc^2d+\\
+72a^2bcd^2-12ab^4d-36ab^2c^2d+ 72ab^2cd^2+24b^3cd^2.
\end{multline*}
Observe that $f_1=f_2+f_3$.

\begin{lemma}
\label{lemma:4-9}
Suppose that $L$ is ample. Then $\beta_L(E)<0$ or $\beta_L(S)<0$.
\end{lemma}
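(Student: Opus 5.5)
Since $g(a,b,c,d)>0$, the formulas above turn the statement into a purely algebraic claim: $\beta_L(E)\geqslant 0$ is equivalent to $af_1\leqslant 0$, hence to $f_1(a,b,c,d)\leqslant 0$ because $a>0$, and $\beta_L(S)\geqslant 0$ is equivalent to $f_2(a,b,c,d)\leqslant 0$. Ampleness of $L$ means $a,b,c,d>0$, since $L$ lies in the interior of the nef cone spanned by $H$, $H-E$, $H-E-F_1$, $H-F_2$. So I will argue by contradiction, assume $f_1\leqslant 0$ and $f_2\leqslant 0$ with all variables positive, and follow the template of Lemmas~\ref{lemma:3-18-unstable} and~\ref{lemma:4-8-unstable}.

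Inspecting $f_1$, the only negative monomials are $-a^4d$, $-5a^3bd$ and $-4a^2c^2d$. The natural split is therefore by the sign of $c-d$.
\begin{itemize}
\item If $c\geqslant d$, set $c=d+\delta$ with $\delta\geqslant 0$. Then $a^4c-a^4d=a^4\delta$. The term $-4a^2c^2d+4a^2cd^2=-4a^2cd\delta$ must be absorbed by $8a^2bc^2$, $12a^2bcd$ and similar terms, which is not automatic when $b$ is small. So I expect to need a positive combination $\lambda f_1+\mu f_2$ whose expansion in $(a,b,d,\delta)$ has only nonnegative coefficients and some strictly positive ones.
\item If $c\leqslant d$, set $d=c+\delta$ with $\delta\geqslant 0$. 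Now $f_2$ is the better candidate: its leading part $3a^5(d-c)=3a^5\delta$ is nonnegative, while $f_1$ contains $-a^4\delta$. I would substitute and look for a multiplier, in the spirit of the combinations $af_1+f_2$ and $(a_2+a_4)f_1+f_2$ used earlier, making all coefficients nonnegative.
\end{itemize}

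The identity $f_1=f_2+f_3$ may be useful in choosing these combinations. The multipliers should be sought among small-degree monomials in $a$, $b$, and possibly $c$ or $d$, so that degrees match.

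The main obstacle is finding, in each region, the right linear combination with polynomial coefficients and the right further subdivision (possibly also comparing $a$ with $b$, or $a$ with $d$) so that the substituted polynomial is manifestly positive. The mixed negative terms $-24a^3b^2c$, $-24a^2b^3c$ and $-24b^3cd^2$ in $f_2$ suggest that a single split by $c-d$ may not suffice. In that case I would try $f_2+\kappa a f_1$ for suitable constants $\kappa$, and refine the case split by the sign of $b-c$, verifying positivity of the coefficients by direct expansion as in the previous families.
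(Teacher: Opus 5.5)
There is a genuine gap. Your reduction is correct: since $g>0$ and $a>0$, the hypotheses $\beta_L(E)\geqslant 0$ and $\beta_L(S)\geqslant 0$ say exactly that $f_1\leqslant 0$ and $f_2\leqslant 0$ with $a,b,c,d>0$. Beyond that point, however, what you give is a search plan rather than a proof. No combination is written down and no positivity is checked, and that computation is the whole content of the lemma.

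Moreover, the forms you propose for the case $c\leqslant d$ cannot work. As $a\to 0$ with $b,c,d$ fixed, $f_2\to -24b^3cd^2<0$ while $af_1\to 0$. So $f_2$ itself, and every $f_2+\kappa af_1$ with $\kappa$ constant, takes negative values in $\{c\leqslant d\}$. The same holds in each piece of your proposed refinement by the sign of $b-c$. On the other side, $f_1=a^4(c-d)+O(a^3)$ is negative for large $a$ when $c<d$. So this region needs a decomposition that separates small $a$ from large $a$, and your sketch does not supply one. Note also that the printed identity $f_1=f_2+f_3$ is not homogeneous ($f_1$ has degree $5$); the correct relation is $3af_1=f_2+f_3$.

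The missing idea, which is the paper's argument, is to compare $a$ with $b$. This makes the split by $c-d$ unnecessary.
\begin{itemize}
\item If $b\geqslant a$, then $f_1>0$ outright. The terms $2a^2b^2d+6ab^3d\geqslant 2a^4d+6a^3bd$ absorb $-a^4d-5a^3bd$, and $12abc^2d\geqslant 12a^2c^2d$ absorbs $-4a^2c^2d$.
\item If $a\geqslant b$, use $3af_1+f_2$. This combination is singled out by the identity $f_2=-3af_1$ on $\{b=0\}$, where $f_1=a^2(c-d)(a^2-4cd)$. One finds $3af_1+f_2=bG$, where $G$ is a quintic whose only negative monomials are $-24a^2b^2c$ and $-24b^2cd^2$. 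For $a\geqslant b$ these are absorbed by $12a^4c+12a^3bc$ and $36a^2cd^2$, so $G>0$.
\end{itemize}
The paper's two displays, labelled $f_1$ and $f_1+f_2$, are really $3af_1$ and $3af_1+f_2$, expanded in $\delta=b-a$ and $\epsilon=a-b$ respectively.

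For the record, your case $c\geqslant d$ can be closed directly: $(3a+2b)f_1+f_2=b(G+2f_1)$, and the only negative monomial of $G+2f_1$ is $-2a^4d$, which $14a^4c$ dominates when $c\geqslant d$. It is the case $c\leqslant d$ that your proposal leaves open.
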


\begin{proof}
Since $L$ is ample, all $a$, $b$, $c$, $d$ are positive. Suppose that $\beta_L(E)\geqslant 0$ and $\beta_L(S)\geqslant 0$. Then $f_1(a,b,c,d)\leqslant 0$ and $f_2(a,b,c,d)\leqslant 0$. However, if $b\geqslant a$, then
\begin{multline*}
0\geqslant f_1(a,b,c,d)=21a^6+63a^5c+6a^5d+54a^5\delta+24a^4c^2+90a^4cd+96a^4c\delta+36a^4d^2+51a^4d\delta
+45a^4\delta^2+\\24a^3c^2d+24a^3c^2\delta+120a^3cd^2+144a^3cd\delta+36a^3c\delta^2+90a^3d^2\delta+60a^3d\delta^2+12a^3\delta^3+36a^2c^2d^2+36a^2c^2d\delta\\+162a^2cd^2\delta+54a^2cd\delta^2
+72a^2d^2\delta^2+18a^2d\delta^3+36ac^2d^2\delta+54acd^2\delta^2+18ad^2\delta^3>0,
\end{multline*}
where $\delta=b-a$. Similarly, if $b\leqslant a$, then
\begin{multline*}
0\geqslant f_1(a,b,c,d)+f_2(a,b,c,d)=28b^6+120b^5d+96b^5\epsilon+36b^4c^2+120b^4cd+36b^4c\epsilon
+72b^4d^2\\+276b^4d\epsilon+120b^4\epsilon^2+144b^3c^2d+108b^3c^2\epsilon+48b^3cd^2+288b^3cd\epsilon+84b^3c\epsilon^2
+108b^3d^2\epsilon+204b^3d\epsilon^2+64b^3\epsilon^3\\+72b^2c^2d^2+252b^2c^2d\epsilon+108b^2c^2\epsilon^2+108b^2cd^2\epsilon
+216b^2cd\epsilon^2+60b^2c\epsilon^3+36b^2d^2\epsilon^2+48b^2d\epsilon^3+12b^2\epsilon^4+\\72bc^2d^2\epsilon
+108bc^2d\epsilon^2+ 36bc^2\epsilon^3+36bcd^2\epsilon^2+48bcd\epsilon^3+12bc\epsilon^4>0,
\end{multline*}
where $\epsilon=a-b$. The contradiction obtained completes the proof of the lemma.
\end{proof}

Thus, if $L$ is ample, then $(X,L)$ is K-unstable.

\subsection{Family \textnumero 4.10}
\label{subsection:4-10}

Let $X=\mathbb{P}^1\times S_7$, where $S_7$ is the smooth del Pezzo surface of degree~$7$.
Then $X$ is the unique smooth Fano threefold in Family \textnumero 4.10.
Let $\pi_1\colon X\to\mathbb{P}^1$ and $\pi_2\colon X\to S_7$ be the projections to the first and the second factors, respectively.
Let $F_1$ and $F_2$ be irreducible surfaces in $X$ such that $\pi_2(F_1)$ and $\pi_2(F_2)$ are disjoint $(-1)$-curves,
let $E$ be the surface in $X$ such that $\pi_2(E)$ is the $(-1)$-curve that intersects both $\pi_2(F_1)$ and $\pi_2(F_2)$,
and let $G$ be a fibre of the projection $\pi_1$. Then $-K_X\sim 3E+2(F_1+F_2)+2G$,
and all non-zero intersections of the divisors $F_1$, $F_2$, $E$ and $G$ are
$G\cdot F_1^2=-1$, $G\cdot F_2^2=-1$, $G\cdot E^2=-1$, $E\cdot F_1\cdot G=1$, $E\cdot F_2\cdot G=1$.

Let $L$ be a nef and big $\mathbb{Q}$-divisor on $X$.  Then
$$
L\sim_{\mathbb{Q}}a_1(E+F_1)+a_2(E+F_2)+b(E+F_1+F_2)+cG
$$
for some non-negative rational numbers $a_1$, $a_2$, $b$, $c$. We have
$$
L^3=3c(2a_1a_2+2a_1b+2a_2b+b^2).
$$
and
$$
\frac{(-K_X)\cdot L^2}{L^3}=\frac{2(2a_1a_2+2a_1b+2a_1c+2a_2b+2a_2c+b^2+3bc)}{3c(2a_1a_2+2a_1b+2a_2b+b^2)}.
$$

Let us compute $\beta_L(E)$. Without loss of generality, we may assume~$a_1\leqslant a_2$.
Fix $u\in\mathbb{R}_{\geqslant 0}$. Then
$$
L-uE\sim_{\mathbb{R}}(a_1+a_2+b-u)E+(a_1+b)F_1+(a_2+b)F_2+cG,
$$
so that $L-uE$ is not pseudo-effective for $u>a_1+a_2+b$.
Moreover, if $u\leqslant a_1$, then $L-uE$ is~nef.
Furthermore, if $a_1<u\leqslant a_2$, then the~Zariski decomposition of the divisor $L-uE$ is
$$
L-uE\sim_{\mathbb{R}}\underbrace{(a_1+a_2+b-u)(E+F_2)+(a_1+b)F_1+cG}_{\text{positive part}}+\underbrace{(u-a_1)F_2}_{\text{negative part}}.
$$
Finally, if $a_2<u\leqslant a_1+a_2+b$, then the~Zariski decomposition of the divisor $L-uE$ is
$$
L-uE\sim_{\mathbb{R}}\underbrace{(a_1+a_2+b-u)(E+F_1+F_2)+cG}_{\text{positive part}}+\underbrace{(u-a_2)F_1+(u-a_1)F_2}_{\text{negative part}}.
$$
Hence, if $0\leqslant u\leqslant a_1+a_2+b$, then
$$
\mathrm{vol}\big(L-uE\big)=
\left\{\aligned
&\big((a_1+a_2+b-u)E+(a_1+b)F_1+(a_2+b)F_2+cG\big)^3\ \text{if $0\leqslant u\leqslant a_1$}, \\
&\big((a_1+a_2+b-u)(E+F_2)+(a_1+b)F_1+cG\big)^3\ \text{if $a_1<u\leqslant a_2$},\\
&\big((a_1+a_2+b-u)(E+F_1+F_2)+cG\big)^3\ \text{if $a_2<u\leqslant a_1+a_2+b$}.
\endaligned
\right.
$$
We compute
$$
\mathrm{vol}\big(L-uE\big)=
\left\{\aligned
&3c(2a_1a_2+2a_1b+2a_2b+b^2-2bu-u^2)\ \text{if $0\leqslant u\leqslant a_1$}, \\
&3c(a_1+b)(a_1+2a_2+b-2u)\ \text{if $a_1<u\leqslant a_2$},\\
&3c(a_1+a_2+b-u)^2\ \text{if $a_2<u\leqslant a_1+a_2+b$}.
\endaligned
\right.
$$
Therefore, using \eqref{equation:beta} and integrating, we get
$$
\beta_L(E)=-\frac{2b(3a_1^2a_2+3a_1a_2^2+6a_1a_2b+a_1b^2+a_2b^2)}{3(2a_1a_2+2a_1b+2a_2b+b^2)^2}.
$$
Thus, if $L$ is ample, then $\beta_{L}(E)<0$, so $(X,L)$ is K-unstable.

\subsection{Family \textnumero 4.11}
\label{subsection:4-11}

Let $\mathbf{s}$ be the $(-1)$-curve in the surface $\mathbb{F}_1$, let $\mathbf{f}$ be a fibre of the natural projection $\mathbb{F}_1\to\mathbb{P}^1$, let $\mathrm{pr}_1\colon\mathbb{P}^1\times\mathbb{F}_1\to \mathbb{P}^1$ be the projection to the first factor, let $\mathrm{pr}_2\colon\mathbb{P}^1\times\mathbb{F}_1\to \mathbb{F}_1$ be the projection to the second factor, let $\overline{A}$ be a fibre of $\mathrm{pr}_1$, let $\overline{S}=\mathrm{pr}_2^*(\mathbf{s})$, let $\overline{F}=\mathrm{pr}_2^*(\mathbf{f})$, let $Z=\overline{A}\cap\overline{S}$, let $\pi\colon X\to\mathbb{P}^1\times\mathbb{F}_1$ be the blow-up of the curve $Z$, let $E$ be the $\pi$-exceptional surface, let $A=\pi^*(\overline{A})$, let $S=\pi^*(\overline{S})$, let $F=\pi^*(\overline{F})$, let $\widetilde{A}$, $\widetilde{S}$, $\widetilde{F}$ be the strict transforms on $X$ of the surfaces $\overline{A}$, $\overline{S}$, $\overline{F}$, respectively. Then $X$ is the unique smooth Fano threefold in Family \textnumero 4.11, $\widetilde{A}\sim A-E$, $\widetilde{S}\sim S-E$, $\widetilde{F}=F$, $\widetilde{A}\simeq\overline{A}\simeq\mathbb{F}_1$, $\widetilde{S}\simeq\overline{S}\simeq\mathbb{P}^1\times\mathbb{P}^1$, and  $\widetilde{S}\vert_{\widetilde{S}}$ is a divisor of degree $(-1,-1)$.
Moreover, the nef cone of $X$ is generated by the divisors $A$, $F$, $S+F$ and $A+S+F-E$, and the Mori cone of $X$ is generated by the curves $C_1$, $C_2$, $C_3$, $C_4$ that can be defined as follows:
\begin{itemize}
\item $C_1$ is a fibre of the natural projection $\widetilde{S}\to\mathbf{s}$;
\item $C_2$ is a fibre of the natural projection $\widetilde{A}\to\mathbb{P}^1$;
\item $C_3$ is another ruling of the surface $\widetilde{S}\simeq\mathbb{P}^1\times\mathbb{P}^1$, i.e., $C_3$ is a curve in $|A\vert_{\widetilde{S}}$;
\item $C_4$ is a fibre of the natural projection $E\to Z$.
\end{itemize}
Furthermore, all non-zero intersections of the~divisors $A$, $F$, $S$, $E$ are
$E^3=1$, $F\cdot E^2=-1$, $A\cdot S^2=-1$, $A\cdot F\cdot S=1$, $S\cdot E^2=1$, and the non-zero intersections of the~divisors $A$, $F$, $S$, $E$  with the~curves $C_1$, $C_2$, $C_3$, $C_4$ are $A\cdot C_1=F\cdot C_2=-F\cdot C_3=S\cdot C_3=E\cdot C_1=E\cdot C_2=-E\cdot C_4=1.$

Let $L$ be a nef and big $\mathbb{Q}$-divisor on $X$. Then there are $a,b,c,d\in\mathbb{Q}_{\geqslant 0}$ such that
$$
L\sim_{\mathbb{Q}} aA+bF+c(S+F)+d(A+S+F-E).
$$
We have $L^3=6abc+6abd+3ac^2+6acd+3ad^2+6bcd+3bd^2+3c^2d+6cd^2+2d^3$ and
$$
\frac{(-K_X)\cdot L^{2}}{L^3}=2\frac{2ab+3ac+3ad+2bc+3bd+c^2+5cd+3d^2}{6abc+6abd+3ac^2+6acd+3ad^2+6bcd+3bd^2+3c^2d+6cd^2+2d^3}.
$$
Moreover, we have
$$
\beta_{L}(\widetilde{S})=-\frac{h(a,b,c,d)}{g(a,b,c,d)},
$$
where $g(a,b,c,d)=2(6abc+6abd+3ac^2+6acd+3ad^2+6bcd+3bd^2+3c^2d+6cd^2+2d^3)^2$ and
\begin{multline*}
\quad \quad h(a,b,c,d)=12a^2bc^3+36a^2bc^2d+36a^2bcd^2+12a^2bd^3+18ab^2c^2d+\\
+18ab^2cd^2+12ab^2d^3+36abc^3d+90abc^2d^2+72abcd^3+24abd^4+12ac^3d^2+18ac^2d^3+\\
+9acd^4+3ad^5-6b^2cd^3+12bc^3d^2+18bc^2d^3+6bcd^4+3bd^5+8c^3d^3+9c^2d^4+3cd^5+d^6.\quad \quad \quad
\end{multline*}
Furthermore, we have
$$
\beta_{L}(E)=
\left\{\aligned
&-\frac{f_1(a,b,c,d)}{g(a,b,c,d)}\ \text{if $a\leqslant c$},\\
&-\frac{f_2(a,b,c,d)}{g(a,b,c,d)}\ \text{if $a\geqslant c$},
\endaligned
\right.
$$
where
\begin{multline*}
f_1(a,b,c,d)=36a^2b^2cd+24a^2bc^3+90a^2bc^2d+90a^2bcd^2+12a^2bd^3
+18a^2c^2d^2+18a^2cd^3\\+36ab^2c^2d+72ab^2cd^2+12ab^2d^3+72abc^3d+198abc^2d^2
+156abcd^3+24abd^4+24ac^3d^2+\\60ac^2d^3+39acd^4+3ad^5+12b^2cd^3+24bc^3d^2
+48bc^2d^3+36bcd^4+3bd^5+16c^3d^3+27c^2d^4+15cd^5+d^6,\quad \quad \quad \quad \quad \quad
\end{multline*}
and
\begin{multline*}
\quad \quad f_2(a,b,c,d)=12a^5b+18a^5c+18a^5d-12a^4bc+42a^4bd-30a^4c^2+30a^4cd+\\
+54a^4d^2-24a^3bc^2-84a^3bcd+36a^3bd^2-12a^3c^3-156a^3c^2d-84a^3cd^2+36a^3d^3+36a^2b^2cd+\\
+48a^2bc^3+90a^2bc^2d-18a^2bcd^2+12a^2bd^3+36a^2c^4+108a^2c^3d-54a^2c^2d^2-90a^2cd^3+36ab^2c^2d+\\
+72ab^2cd^2+12ab^2d^3+12abc^4+ 156abc^3d+306abc^2d^2+156abcd^3+24abd^4-6ac^5+42ac^4d+204ac^3d^2+\\
+168ac^2d^3+39acd^4+3ad^5+12b^2cd^3-12bc^5-42bc^4d-12bc^3d^2+48bc^2d^3+36bcd^4+3bd^5-\\
-6c^6-42c^5d-78c^4d^2-20c^3d^3+27c^2d^4+15cd^5+d^6.\quad \quad \quad \quad \quad \quad
\end{multline*}
Note that $f_1\ne f_2$, so $\beta_L(E)$ is not a rational function in $a$, $b$, $c$, $d$. Moreover, if $L$ is ample, then $(X,L)$ is K-unstable  by the following result.

\begin{lemma}
\label{lemma:4-11}
If $L$ is ample, then $\beta_{L}(E)<0$ or $\beta_L(\widetilde{S})<0$.
\end{lemma}

\begin{proof}
Suppose that $L$ is ample. Then $a$, $b$, $c$, $d$ are positive.
If $a\leqslant c$, then $f_1(a,b,c,d)>0$, since all coefficients of the polynomial $f_1$ are positive.
Hence, if $a\leqslant c$, then $\beta_{L}(E)<0$. If $a\geqslant c$, then
\begin{multline*}
h(a,b,c,d)=18b^2c^3d+18b^2c^2d^2+18b^2c^2d\delta+6b^2cd^3+18b^2cd^2\delta+12b^2d^3\delta+\\
+12bc^5+72bc^4d+24bc^4\delta+138bc^3d^2+108bc^3d\delta+12bc^3\delta^2+102bc^2d^3+162bc^2d^2\delta+\\ +36bc^2d\delta^2+30bcd^4+96bcd^3\delta+36bcd^2\delta^2+3bd^5+24bd^4\delta+12bd^3\delta^2+12c^4d^2+\\
+26c^3d^3+12c^3d^2\delta+18c^2d^4+18c^2d^3\delta+6cd^5+9cd^4\delta+d^6+3d^5\delta,\quad \quad \quad \quad
\end{multline*}
where $\delta=a-c\geqslant 0$. Thus, if $a\geqslant c$, then $h(a,b,c,d)>0$, so $\beta_L(\widetilde{S})<0$.
\end{proof}

Let us explain the computations of $\beta_L(\widetilde{S})$ and $\beta_{L}(E)$. We start with $\beta_{L}(\widetilde{S})$. Take $u\in\mathbb{R}_{\geqslant 0}$ and consider the divisor $L-u\widetilde{S}$. If $u\leqslant d$, then $L-u\tilde S$ is nef, so
$$
\mathrm{vol}(L-u\widetilde{S})=6abc+6abd-6abu+3ac^2+6acd+3ad^2-3au^2+6bcd+3bd^2-3bu^2+3c^2d+6cd^2+2d^3-2u^3.
$$
Similarly, if $u\in[d,c+d]$, then the negative part of the Zariski decomposition of the divisor $L-u\widetilde{S}$ is $(u-d)E$, so its positive part is $(a+d)A+(b+c+d)F+(c+d-u)S$, which gives
$$
\mathrm{vol}(L-u\widetilde{S})=3(c+d-u)(c+2b+d+u)(a+d).
$$
Finally, if $u>c+d$, then $L-u\widetilde{S}$ is not pseudoeffective, so $\mathrm{vol}(L-u\widetilde{S})=0$. Now, using \eqref{equation:beta}, we see that $\beta_{L}(\widetilde{S})=-h/g$ as claimed. Now, we show how to compute $\beta_{L}(E)$. As above, we take $u\in\mathbb{R}_{\geqslant 0}$ and consider the divisor $L-uE$. If $u\leqslant \mathrm{min}(a,c)$, then $L-uE$ is nef, so
$$
\mathrm{vol}(L-uE)=6abc+6abd+3ac^2+6acd+3ad^2+6bcd+3bd^2-6bdu-3bu^2+3c^2d+6cd^2+2d^3-3d^2u-3du^2-u^3.
$$
If $a\leqslant c$ and $u\in[a,c]$, the negative part of the Zariski decomposition of  $L-uE$ is $(u-a)\widetilde{S}$,
so its positive part is $(a+d)A+(b+c+d)F+(a+c+d-u)S-(d+a)E$, which gives
$$
\mathrm{vol}(L-uE)=(a+d)(-a^2+3ab+ad+3au+6bc+3bd-6bu+3c^2+6cd+2d^2-3du-3u^2).
$$
Likewise, if $a\leqslant c$ and $u\in[c,a+c+d]$,
then the negative part of the Zariski decomposition of the divisor $L-uE$ is $(u-a)\widetilde{S}+(u-c)\widetilde{A}$,
so its positive part is $(a+c+d-u)(A+S-E)+(b+c+d)F$, which gives
$$
\mathrm{vol}(L-uE)=(a+c+d-u)^2(-a+3b+2c+2d+u).
$$
If $a\leqslant c$ and $u>a+c+d$, then $L-uE$ is not pseudoeffective, so $\mathrm{vol}(L-uE)=0$. Thus, if $a\leqslant c$, then it follows from \eqref{equation:beta} that $\beta_L(E)=-f_1/g$ as claimed. Similarly, if $a\geqslant c$ and $u\in[c,a]$, then the negative part of the Zariski decomposition of the divisor $L-uE$ is $(u-c)\widetilde{A}$, so its positive part is $(a+c+d-u)A+(b+c+d)F+(c+d)S-(d+c)E$, which gives
$$
\mathrm{vol}(L-uE)=(c+d)(6ab+3ac+3ad+3bc+3bd-6bu+2c^2+4cd-3cu+2d^2-3du).
$$
As above, if $a\geqslant c$ and $u\in[a,a+c+d]$, then the negative part of the Zariski decomposition of the divisor $L-uE$ is $(u-a)\widetilde{S}+(u-c)\widetilde{A}$, so its positive part is $(a+c+d-u)(A+S-E)+(b+c+d)F$, which gives
$$
\mathrm{vol}(L-uE)=(a+c+d-u)^2(-a+3b+2c+2d+u).
$$
Finally, if $a\geqslant c$ and $u>a+c+d$, then $L-uE$ is not pseudoeffective, so $\mathrm{vol}(L-uE)=0$. Thus, if $a\geqslant c$, then it follows from \eqref{equation:beta} that $\beta_{L}(E)=-f_2/g$ as claimed.

\subsection{Family \textnumero 4.12}
\label{subsection:4-12}

Let $P_1$ and $P_2$ be two points in $\mathbb{P}^3$, let $\ell$ be a line in $\mathbb{P}^3$ that passes through $P_1$ and $P_2$, let $\phi\colon Y\to\mathbb{P}^3$ be the blow-up of the points $P_1$ and $P_2$, let $\pi\colon X\to Y$ be the blow-up of the strict transform of the line $\ell$, let $E$ be the $\pi$-exceptional surface, let $F_1$ and $F_2$ be the strict transforms on $X$ of the $\phi$-exceptional surfaces, and let $H=(\phi\circ\pi)^*(\mathcal{O}_{\mathcal{P}^3}(1))$. Then $X$ is the unique smooth Fano threefold in Family \textnumero 4.12.

The nef cone of $X$ is generated by the divisors $H$, $H-F_1$, $H-F_2$, $H-E-F_1-F_2$.
To describe the Mori cone of $X$, note that $F_1\simeq F_2\simeq\mathbb{F}_1$, $E\simeq\mathbb{P}^1\times\mathbb{P}^1$
and $E\vert_{E}$ is a divisor of degree $(-1,-1)$.
Let $C_1$ and $C_2$ be fibres of the natural projections $F_1\to\mathbb{P}^1$ and $F_2\to\mathbb{P}^1$, respectively,
let $C_3$ be the fibre of the natural projection $E\to\ell$, and let $C_4$ be the other ruling of the surface $E\simeq\mathbb{P}^1\times\mathbb{P}^1$.
Then the curves $C_1$, $C_2$, $C_3$, $C_4$ generate the Mori cone of $X$.

Let $L$ be a nef and big $\mathbb{Q}$-divisor on $X$. Then there are $a,b,c,d\in\mathbb{Q}_{\geqslant 0}$ such that
$$
L\sim_{\mathbb{Q}} aH+b(H-F_1)+c(H-F_2)+d(H-E-F_1-F_2).
$$
We have $L^3=a^3+3a^2b+3a^2c+3a^2d+3ab^2+6abc+6abd+3ac^2+6acd+3b^2c+3bc^2+6bcd$ and
$$
\frac{(-K_X)\cdot L^{2}}{L^3}=2\frac{2a^2+4ab+4ac+3ad+b^2+4bc+2bd+c^2+2cd}{a^3+3a^2b+3a^2c+3a^2d+3ab^2+6abc+6abd+3ac^2+6acd+3b^2c+3bc^2+6bcd}.
$$
Moreover, we have
$$
\beta_{L}(E)=-\frac{f(a,b,c,d)}{g(a,b,c,d)},
$$
where $g=(a^3+3a^2b+3a^2c+3a^2d+3ab^2+6abc+6abd+3ac^2+6acd+3b^2c+3bc^2+6bcd)^2$ and
\begin{multline*}
\quad \quad f=3a^5d+3a^4b^2+18a^4bd+3a^4c^2+18a^4cd+4a^3b^3+12a^3b^2c+\\
+30a^3b^2d+12a^3bc^2+84a^3bcd+6a^3bd^2+4a^3c^3+30a^3c^2d+6a^3cd^2+12a^2b^3c+12a^2b^3d+\\
+36a^2b^2c^2+108a^2b^2cd+12a^2bc^3+108a^2bc^2d+36a^2bcd^2+12a^2c^3d+24ab^3c^2+36ab^3cd+\\
+24ab^2c^3+90ab^2c^2d+18ab^2cd^2+36abc^3d+18abc^2d^2+12b^3c^3+12b^3c^2d+12b^2c^3d.\quad \quad \quad
\end{multline*}
Thus, if $L$ is ample, then $a$, $b$, $c$, $d$ are positive, so $\beta_L(E)<0$, and $(X,L)$ is K-unstable.

Let us explain how we computed $\beta_{L}(E)$. Since we have an obvious symmetry swapping divisors $F_1$ and $F_2$, we may further assume that $b\leqslant c$, which does not affect our computations. Take $u\in\mathbb{R}_{\geqslant 0}$ and consider the divisor $L-uE$. If $u\leqslant b$, then $L-uE$ is nef, so
\begin{multline*}
\quad \quad \mathrm{vol}(L-uE)=a^3+3a^2b+3a^2c+3a^2d+3ab^2+6abc+\\
+6abd+3ac^2+6acd-6adu-3au^2+3b^2c+3bc^2+6bcd-3du^2-2u^3.\quad \quad \quad \quad
\end{multline*}
Moreover, if $u\in[b,c]$, then the negative part of the Zariski decomposition of the divisor $L-uE$ is $(u - b)F_1$, so its positive part is $(a+b+c+d)H-(d+u)F_1-(c+d)F_2-(d+u)E$, which gives
$$
\mathrm{vol}(L-uE)=(a+b)(a^2+2ab+3ac+3ad+b^2+3bc+3bd+3c^2+6cd-6du-3u^2).
$$
Furthermore, if $u\in[c,a+b+c]$, then the negative part of the Zariski decomposition of the divisor $L-uE$ is $(u-b)F_1+(u-c)F_2$, so its positive part is $(a+b+c+d)H-(u+d)(E+F_1+F_2)$, which gives
$$
\mathrm{vol}(L-uE)=(a+b+c-u)^2(a+b+c+3d+2u).
$$
Finally, if $u>a+b+c$, then $L-uE$ is not pseudoeffective, so $\mathrm{vol}(L-uE)=0$. Thus, using \eqref{equation:beta} we obtain that $\beta_{L}(E)=-f/g$ as claimed.

\section{Birational reduction and auxiliary extractions}\label{section:birational-reduction}

The following result shows how the invariant $\beta_L(\mathscr{F})$ behaves when the $\mathbb{Q}$-divisor $L$
approaches the border of the nef cone of $X$. We will use this in all remaining cases to prove K-instability using computations of the $\beta$-invariant in higher Picard rank cases in Section~\ref{section:beta}.

\begin{proposition}
\label{proposition:Paolo-2}
Let $X$ be a normal projective variety with $\mathbb{Q}$-Gorenstein singularities.
Suppose that there exists a $K_X$-non-positive projective birational morphism $f\colon X\to Y$ such that $Y$ is normal and has $\mathbb{Q}$-Gorenstein  singularities,
and $L\sim_{\mathbb{Q}}f^*(H)$ for some nef and big $\mathbb{Q}$-divisor $H$ on the variety  $Y$.
Then, for every prime divisor $\mathscr{F}$ over $X$, we have 
$\beta_L(\mathscr{F})=\beta_H(\mathscr{F})$.
\end{proposition}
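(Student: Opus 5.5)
Since $L\sim_{\mathbb{Q}}f^*(H)$ and $f$ is birational, every prime divisor $\mathscr{F}$ over $X$ is also a prime divisor over $Y$ (and conversely). So the plan is to compare the three terms of \eqref{equation:beta} one at a time for the pairs $(X,L)$ and $(Y,H)$. The key input is the hypothesis that $f$ is $K_X$-non-positive. I read this as saying that $K_X=f^*K_Y+\Delta$ with $\Delta$ an $f$-exceptional $\mathbb{Q}$-divisor that is effective or anti-effective in the appropriate sense. The sign convention will matter only for the volume comparison, and I will fix it there.

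\textbf{Step 1: the zeroth-order terms.} Since $L^n=H^n$ by the projection formula, the normalisations agree. Writing $K_X=f^*K_Y+\Delta$, we get $K_X\cdot L^{n-1}=K_Y\cdot H^{n-1}$, because $\Delta\cdot f^*H^{n-1}=0$ for the exceptional $\Delta$. Hence $\mu$ is the same for both pairs. Also $\mathrm{vol}(L-u\mathscr{F})=\mathrm{vol}(f^*H-u\mathscr{F})=\mathrm{vol}(H-u\mathscr{F})$, since volumes of divisors over $X$ are computed on a common model and pulling back preserves volume. So the first integral in \eqref{equation:beta} agrees.

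\textbf{Step 2: the derivative term and the log discrepancy.} Here the two sides differ. On $X$ we have
\[
\mathrm{vol}(L+tK_X-u\mathscr{F})=\mathrm{vol}(f^*(H+tK_Y)+t\Delta-u\mathscr{F}),
\]
while the $Y$-side has no $t\Delta$. Also $A_X(\mathscr{F})=A_Y(\mathscr{F})-\mathrm{ord}_{\mathscr{F}}(\Delta)$. The claim is that these two discrepancies cancel. I would use the formula that underlies \cite{DervanLegendre}: $\beta$ is the $t$-derivative at $t=0$ of $\beta$-type expressions for the twisted class, or equivalently $\beta_L(\mathscr{F})=A(\mathscr{F})+\ldots$ arises from differentiating the $S$-invariant. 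Concretely, for $t\geqslant 0$ small, $f^*(H+tK_Y)+t\Delta-u\mathscr{F}$ differs from $f^*(H+tK_Y)-u\mathscr{F}$ by an exceptional divisor. If $t\Delta$ is effective and exceptional, then adding it does not change the volume: the volume of a pullback plus an effective exceptional divisor equals the volume of the pullback. This also holds after subtracting $u\mathscr{F}$, by passing to a model where $\mathscr{F}$ lives and using the $Y$-side Zariski decomposition. If instead $-\Delta$ is effective, I would use $t<0$, or the one-sided derivative, which the formula permits.

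With that choice the derivative terms agree exactly, which leaves the discrepancy term $-\mathrm{ord}_{\mathscr{F}}\Delta$ to account for. Since $\Delta$ is $f$-exceptional and we are comparing the same $\mathscr{F}$, the correction should come from the boundary contribution of integrating $\partial_t$ when $\mathscr{F}$ is centered in $\mathrm{Exc}(f)$. Rather than tracking this directly, I would rewrite the derivative term via the identity $\partial_t\mathrm{vol}(L+tK_X-u\mathscr{F})|_{t=0}=n\,\langle (L-u\mathscr{F})^{n-1}\rangle\cdot K_X$, using positive intersection products (Boucksom--Favre--Jonsson differentiability). Then
\[
\langle (f^*H-u\mathscr{F})^{n-1}\rangle\cdot\Delta
\]
integrates against $u$ to exactly $-\,\mathrm{ord}_{\mathscr{F}}(\Delta)\,L^n$ plus terms killed by $\Delta$ being exceptional. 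So the discrepancy difference cancels precisely.

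\textbf{Main obstacle.} Step 2 is the hard part, namely showing that the exceptional correction $\Delta$ contributes exactly $\mathrm{ord}_{\mathscr{F}}\Delta$ after integration. I expect this to go through the negativity lemma together with the orthogonality $\langle P^{n-1}\rangle\cdot N=0$ for divisorial Zariski decompositions on a resolution, and the sign hypothesis on $f$ is what makes those decompositions compatible.
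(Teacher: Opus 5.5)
Your Step 1 is correct and matches the paper: $L^n=H^n$, $\mu$ is the same, and $\mathrm{vol}(L-u\mathscr{F})=\mathrm{vol}(H-u\mathscr{F})$. Step 2, however, has a genuine gap.

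You claim that $\mathrm{vol}(f^*(H+tK_Y)+t\Delta-u\mathscr{F})=\mathrm{vol}(f^*(H+tK_Y)-u\mathscr{F})$ for $t\geqslant 0$, so that ``the derivative terms agree exactly''. This is false precisely when $c:=\mathrm{ord}_{\mathscr{F}}(\Delta)>0$, which is the only case with anything to prove. Take a model $g\colon Z\to X$ on which $\mathscr{F}$ is a divisor, and set $h=f\circ g$. Then $g^*\Delta=G+c\mathscr{F}$, where $G\geqslant 0$ is $h$-exceptional and does not contain $\mathscr{F}$. The summand $tc\mathscr{F}$ combines with $-u\mathscr{F}$, so only $G$ can be discarded, and the correct identity is
$$
\mathrm{vol}\big(f^*(H+tK_Y)+t\Delta-u\mathscr{F}\big)=\mathrm{vol}\big(H+tK_Y-(u-tc)\mathscr{F}\big),\qquad t\geqslant 0.
$$
If your claim held, you would get $\beta_L(\mathscr{F})=\beta_H(\mathscr{F})-c$, contradicting the statement. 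You notice the leftover $-c$ but do not retract the claim. Instead you assert a compensating term via positive intersection products, and this has three problems:
\begin{itemize}
\item it contradicts what you just said about the derivative terms agreeing;
\item it is not proved, since ``terms killed by $\Delta$ being exceptional'' is exactly the vanishing that requires an argument;
\item it has the wrong sign. To cancel $A_X(\mathscr{F})=A_Y(\mathscr{F})-c$, the $\Delta$-part of the integral must equal $+c\,L^n$. Your $-c\,L^n$ would give $\beta_L(\mathscr{F})=\beta_H(\mathscr{F})-2c$.
\end{itemize}

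The missing computation is the heart of the paper's proof. Since $\mathrm{vol}$ is $C^1$ on the big cone, split the $t$-derivative in the direction $f^*K_Y+\Delta$ into two parts: the $K_Y$-part, which is the $Y$-integrand, and the $\Delta$-part. For $t\geqslant 0$, the negativity lemma gives $\mathrm{vol}(h^*H+tG+(tc-u)\mathscr{F})=\mathrm{vol}(h^*H-(u-tc)\mathscr{F})$. Concretely, because $G\geqslant 0$ is $h$-exceptional and $\mathscr{F}\not\subset\mathrm{Supp}\,G$, sections of $m(h^*H+tG-s\mathscr{F})$ are just sections of $mH$ on $Y$ vanishing to order at least $ms$ along $\mathscr{F}$. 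Hence
$$
\frac{\partial\,\mathrm{vol}(f^*H+t\Delta-u\mathscr{F})}{\partial t}\Big\vert_{t=0}=-c\,\frac{\partial\,\mathrm{vol}(f^*H-u\mathscr{F})}{\partial u}.
$$
Integrating over $[0,\tau]$, where $\tau$ is the pseudo-effective threshold, yields $c\,(\mathrm{vol}(f^*H)-\mathrm{vol}(f^*H-\tau\mathscr{F}))=c\,H^n$. Dividing by $L^n=H^n$ gives exactly $+\mathrm{ord}_{\mathscr{F}}(\Delta)=A_Y(\mathscr{F})-A_X(\mathscr{F})$, which cancels the discrepancy term.

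The hypothesis that $f$ is $K_X$-non-positive gives $\Delta\geqslant 0$ directly, as in the paper, so you do not need to hedge on the sign. Your first route would also work if you kept the shift $u\mapsto u-tc$ in the displayed identity and differentiated it by the chain rule.
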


\begin{proof}
First note that, for any $u\in \mathbb{R}$, we have ${\rm vol}(L-uF)={\rm vol}(H-uF)$ and
$$
\frac{-K_X\cdot L^{n-1}}{L^n}=\frac{-K_Y\cdot H^{n-1}}{H^n},
$$
where $n$ is the dimension of $X$.
Moreover, since $f$ is $K_X$-non-positive, we have $K_X\sim_{\mathbb{Q}} f^*(K_Y)+E$ for some $f$-exceptional $\mathbb Q$-divisor $E\geqslant 0$. Then
$$
{\rm ord}_\mathscr{F}(E)= A_Y(\mathscr{F})-A_X(\mathscr{F}).
$$
Furthermore, if $\tau$ is the pseudo-effective threshold of the divisor $L$ with respect to $\mathscr{F}$, then $\tau$ is also the pseudo-effective threshold of $H$ with respect to $\mathscr{F}$. Thus, it is enough to show that
$$\frac 1 {L^n}\int\limits_0^\tau  \frac {\partial {\rm vol}(L+tK_X-u\mathscr{F}) }{\partial t}\Big\vert_{t=0}du
=\frac 1 {H^n}\int\limits_0^\tau \frac {\partial {\rm vol}(H+tK_Y-u\mathscr{F}) }{\partial t}\Big\vert_{t=0}du + {\rm ord}_\mathscr{F}(E).
$$
Since the volume is a $C^1$ function in the interior of the big cone, we have:
$$\begin{aligned}
\frac 1 {L^n}\int\limits_0^\tau  & \frac {\partial {\rm vol}(L+tK_X-u\mathscr{F}) }{\partial t}\Big\vert_{t=0}du= \frac 1 {H^n}\int\limits_0^\tau \frac {\partial {\rm vol}(f^*(H)-u\mathscr{F} + t(f^*(K_Y)+E)) }{\partial t}\Big\vert_{t=0}du\\
&=\frac 1 {H^n}\left(\int\limits_0^\tau \frac {\partial {\rm vol}(H+tK_Y-u\mathscr{F})  }{\partial t}\Big\vert_{t=0}du+
\int\limits_0^\tau \frac {\partial {\rm vol}(f^*(H)+tE-u\mathscr{F})  }{\partial t}\Big\vert_{t=0}du
\right).
\end{aligned}
$$
Thus, it remains to show that
$$\int\limits_0^\tau \frac {\partial {\rm vol}(f^*(H)+tE-u\mathscr{F})  }{\partial t}\Big\vert_{t=0}du=H^n\cdot {\rm ord}_\mathscr{F}(E).
$$
Let $g\colon Z\to X$ be a birational morphism such that $\mathscr{F}$ is a divisor on $Z$ and let $h=f\circ g\colon Z\to Y$. Then we may write $G=g^*(E)-{\rm ord}_\mathscr{F}(E)\cdot \mathscr{F}$, where $G\geqslant 0$ is a $h$-exceptional $\mathbb Q$-divisor, whose support does not contain $\mathscr{F}$.
We then have
$$
\begin{aligned}
{\rm vol}(f^*(H)+tE-u\mathscr{F})&=  {\rm vol}(h^*(H)+tG + (t\cdot {\rm ord}_\mathscr{F}(E)-u)\mathscr{F})\\
&= {\rm vol}(h^*(H)+ (t\cdot {\rm ord}_\mathscr{F}(E)-u)\mathscr{F})
\end{aligned},
$$
where the last equality follows from the negativity lemma and the fact that $G\geqslant 0$.
Thus, we have
$$\frac {\partial {\rm vol}(f^*(H)+tE-u\mathscr{F})  }{\partial t}\Big\vert_{t=0}=-{\rm ord}_\mathscr{F}(E) \cdot \frac {\partial {\rm vol}(f^*(H)-u\mathscr{F})  }{\partial u}.
$$
Thus,
$$\begin{aligned}
\int\limits_0^\tau \frac {\partial {\rm vol}(f^*(H)+tE-u\mathscr{F})  }{\partial t}\Big\vert_{t=0}du&= -{\rm ord}_\mathscr{F}(E) \cdot \int\limits_0^\tau \frac {\partial {\rm vol}(f^*(H)-u\mathscr{F})  }{\partial u}du\\
&=  -{\rm ord}_\mathscr{F}(E)\cdot ({\rm vol}(f^*(H)-\tau \mathscr{F})-{\rm vol}(f^*(H)))\\
&= H^n\cdot {\rm ord}_\mathscr{F}(E).
\end{aligned}
$$
Thus, our claim follows.
\end{proof}

Let us show how to compute $\beta_L(\mathscr{F})$ in one case.

\begin{example}[{cf.  \cite{CheltsovMartinez-Garcia}}]
\label{example:F1-dP7}
Suppose that $X$ is the smooth del Pezzo surface of degree $7$ and let $L$ be a nef and big $\mathbb Q$-divisor on $X$. Then $X$ contains exactly three $(-1)$-curves, which we denote by $E$, $F_1$, $F_2$.
We may assume that the curves $F_1$ and $F_2$ are disjoint, and $E\cdot F_1=E\cdot F_2=1$.
Then $-K_X\sim 3E+2(F_1+F_2)$ and
$$
L\sim_{\mathbb{Q}} a_1(E+F_1)+a_2(E+F_2)+b(E+F_1+F_2)
$$
for some non-negative rational numbers $a_1$, $a_2$, $b$. Since $L$ is assumed to be big, either $b>0$, or both $a_1>0$ and $a_2>0$. Note that $L$ is ample if and only if $b>0$, $a_1>0$, $a_2>0$. If $a_1=a_2=b=1$, then $L\sim -K_X$. We have $L^2=2a_1a_2+2a_1b+2a_2b+b^2$ and
$$
\mu=\frac{2a_1+2a_2+3b}{2a_1a_2+2a_1b+2a_2b+b^2}.
$$
Let us compute $\beta_L(E)$. Without loss of generality, we may assume that $a_1\leqslant a_2$. Fix $u\in\mathbb{R}_{\geqslant 0}$. Then
$$
L-uE\sim_{\mathbb{R}} (a_1+a_2+b-u)E+(a_1+b)F_1+(a_2+b)F_2,
$$
which implies that $L-uE$ is pseudoeffective $\iff$ $u\leqslant a_1+a_2+b$.
Moreover, if $u\leqslant a_1+a_2+b$, then the Zariski decomposition of the divisor $L-uE$ can be described as follows:
\begin{itemize}
\item if $u\leqslant a_1$, then the divisor $L-uE$ is nef;
\item if $a_1\leqslant u\leqslant a_2$, then the positive part of the Zariski decomposition is
$$
(a_1+a_2+b-u)(E+F_2)+(a_1+b)F_1,
$$
and the negative part is $(u-a_1)F_2$;
\item if $a_2\leqslant u\leqslant a_1+a_2+b$, then the positive part of the Zariski decomposition is
$$
(a_1+a_2+b-u)(E+F_1+F_2),
$$
and the negative part is $(u-a_2)F_1+(u-a_1)F_2$.
\end{itemize}
This gives
$$
\mathrm{vol}\big(L-uE\big)=
\left\{\aligned
&\big(L-uE\big)^2\ \text{if $0\leqslant u\leqslant a_1$}, \\
&\big((a_1+a_2+b-u)(E+F_2)+(a_1+b)F_1\big)^2\ \text{if $a_1<u\leqslant a_2$},\\
&\big((a_1+a_2+b-u)(E+F_1+F_2)\big)^2\ \text{if $a_2<u\leqslant a_1+a_2+b$},\\
&0\ \text{if $a_1+a_2+b<u$}.
\endaligned
\right.
$$
Thus, we compute
$$
\mathrm{vol}\big(L-uE\big)=
\left\{\aligned
&2a_1a_2+2a_1b+2a_2b+b^2-2bu-u^2\ \text{if $0\leqslant u\leqslant a_1$}, \\
&(a_1+b)(a_1+2a_2+b-2u)\ \text{if $a_1<u\leqslant a_2$},\\
&(a_1+a_2+b-u)^2\ \text{if $a_2<u\leqslant a_1+a_2+b$},\\
&0\ \text{if $a_1+a_2+b<u$}.
\endaligned
\right.
$$
This gives
$$
\frac{\partial \mathrm{vol}\big(L+tK_X-uE\big)}{\partial t}\Big\vert_{t=0}=
\left\{\aligned
&2u-4a_1-4a_2-6b\ \text{if $0\leqslant u\leqslant a_1$},\\
&4u-6a_1-4a_2-6b\ \text{if $a_1<u\leqslant a_2$},\\
&6u-6a_1-6a_2-6b\ \text{if $a_2<u\leqslant a_1+a_2+b$},\\
&0\ \text{if $a_1+a_2+b<u$}.
\endaligned
\right.
$$
Now, integrating, we obtain
$$
\beta_L(E)=-\frac{2b(3a_1^2a_2+3a_1a_2^2+6a_1a_2b+a_1b^2+a_2b^2)}{3(2a_1a_2+2a_1b+2a_2b+b^2)^2},
$$
so $(X,L)$ is K-unstable if $L$ is ample. Let $f\colon X\to Y$ be the contraction of the curve $F_2$. Then $Y\simeq\mathbb{F}_1$. Moreover, if $a_1=0$, then $L\sim_{\mathbb{Q}}f^*(H)$, where $H$ is a big and nef divisor on $Y$. In this case, it follows from Proposition~\ref{proposition:Paolo-2} that
$$
\beta_H(E)=-\frac{2a_2b}{3(2a_2+b)^2},
$$
which implies that $(Y,H)$ is K-unstable if $H$ is ample.
\end{example}

\subsection{Family \textnumero 2.30}
\label{subsection:2-30}
Observe that $\widetilde{\mathbb{P}}^3$ in Section\,\ref{subsection:3-23} is the smooth Fano threefold in Family~\textnumero 2.30. Moreover, if $b=0$, then $L\sim_{\mathbb{Q}} \varphi^*(H)$,
where $H$ is a big and nef divisor on $\widetilde{\mathbb{P}}^3$ and $L$ is a big and nef divisor as in Subsection\,\ref{subsection:3-23}. In this case, we have
$$
\beta_H\big(\Pi_C\big)=\beta_L(\Pi_C)=-\frac{3a^2c(2a^3+9a^2c+16ac^2+8c^3)}{2(a^3+6a^2c+6ac^2+2c^3)^2}.
$$
by Proposition~\ref{proposition:Paolo-2}. If $H$ is ample, then $a, c>0$, so that $(\widetilde{\mathbb{P}}^3,H)$ is K-unstable as $\beta_H(\Pi_C)<0$.

\subsection{Family \textnumero 2.31}
\label{subsection:2-31}
Observe that $\widehat{Q}$ in Section\,\ref{subsection:3-23} is the smooth Fano threefold in Family \textnumero 2.31. Moreover, if $a=0$, then $L\sim_{\mathbb{Q}} \psi^*(H)$,
where $H$ is a big and nef divisor on $\widehat{Q}$ and $L$ is a big and nef divisor as in Subsection\,\ref{subsection:3-23}. In this case, we have
$$
\beta_H\big(\Pi_C\big)=\beta_L(\Pi_C)=-\frac{bc(4b^2+9bc+6c^2)}{(3b^2+6bc+2c^2)^2}
$$
by Proposition~\ref{proposition:Paolo-2}. Thus, if $H$ is ample, then $b,c>0$, so $\beta_H(\Pi_C)<0$, and $(\widehat{Q},H)$ is K-unstable.

\subsection{Family \textnumero 2.33}
\label{subsection:2-33}
Refer to Section~\ref{subsection:4-12} and
let $f\colon X\to Y$ be the contraction of the face of the Mori cone generated by the rays $\mathbb{R}_{\geqslant 0}[C_1]$ and $\mathbb{R}_{\geqslant 0}[C_2]$. Then $Y$ is the unique smooth Fano threefold in Family \textnumero 2.33, and $f$ contracts both surfaces $F_1$ and $F_2$ to smooth curves. Suppose that $b=c=0$. Then $L\sim_{\mathbb{Q}} f^*(H)$, where $H$ is a big and nef divisor on $Y$ and $L$ is a big and nef  divisor as in Section~\ref{subsection:4-12}. By Proposition~\ref{proposition:Paolo-2}, we have
$$
\beta_H\big(E\big)=\beta_L\big(E\big)=-\frac{3ad}{(a+3d)^2}.
$$
Moreover, if $H$ is ample, then $a$ and $d$ are positive, so $\beta_H(E)<0$, and $(Y,H)$ is K-unstable.

\subsection{Family \textnumero 2.35}
\label{subsection:2-35}
Refer to Section~\ref{subsection:4-12} and let $f\colon X\to Y$ be the contraction of the face of the Mori cone generated by the rays $\mathbb{R}_{\geqslant 0}[C_2]$ and $\mathbb{R}_{\geqslant 0}[C_3]$. Then $Y$ is the unique smooth Fano threefold in Family \textnumero 2.35, and $f$ contracts surfaces $F_2$ and $E$. Suppose that $c=d=0$ in the computations in Section~\ref{subsection:4-12}. Then $L\sim_{\mathbb{Q}} f^*(H)$, where $H$ is a big and nef divisor on $Y$ and $L$ is a  big and nef divisor as in Section~\ref{subsection:4-12}. By Proposition~\ref{proposition:Paolo-2}, we have
$$
\beta_H\big(E\big)=\beta_L\big(E\big)=-\frac{ab^2(3a+4b)}{(a^2+3ab+3b^2)^2}.
$$
Moreover, if $H$ is ample, then $a$ and $b$ are positive, so that $\beta_H(E)<0$, and $(Y,H)$ is K-unstable.

\subsection{Family \textnumero 2.36}
\label{subsection:2-36}
Observe that $\mathbb{P}(\mathcal{O}_{\mathbb{P}^2}\oplus\mathcal{O}_{\mathbb{P}^2}(2))$ appearing in Section\,\ref{subsection:3-29} is the smooth Fano threefold in Family \textnumero 2.36.
Let $a=0$ in the computations in Section\,\ref{subsection:3-29}. Then $L\sim_{\mathbb{Q}} \theta^*(H)$,
where $H$ is a big and nef divisor on $\mathbb{P}(\mathcal{O}_{\mathbb{P}^2}\oplus\mathcal{O}_{\mathbb{P}^2}(2))$ and $L$
is a big and nef divisor  as in Section\,\ref{subsection:3-29}. It follows from  Proposition~\ref{proposition:Paolo-2} that
$$
\beta_H\big(E\big)=\beta_L(E)=-\frac{3c(4b^3+9b^2c+6bc^2+2c^3)}{(3b^2+6bc+4c^2)^2}.
$$
In particular, if the divisor $H$ is ample, then $b$ and $c$ are positive, so $\beta_H(E)<0$, which implies that $(\mathbb{P}(\mathcal{O}_{\mathbb{P}^2}\oplus\mathcal{O}_{\mathbb{P}^2}(2)),H)$ is K-unstable.

\subsection{Family \textnumero 3.26}
\label{subsection:3-26}
We refer to Section\,\ref{subsection:4-9} with the notation therein.
Let $\eta\colon X\to Y$ be the contraction of the extremal ray $\mathbb{R}_{\geqslant 0}[C_2]$.
Then $Y$ is the smooth Fano threefold in Family \textnumero 3.26, and $\eta$ contracts $F_1$ to a smooth curve.
If $c=0$ in the computation in Section\,\ref{subsection:4-9}, then $L\sim_{\mathbb{Q}} \eta^*(H)$,
where $H$ is a big and nef divisor on $Y$ and $L$ is a big and nef divisor as in Section\,\ref{subsection:4-9}. In this case, we have
$$
\beta_H\big(S\big)=\beta_L(S)=-\frac{a(3a^4d+3a^3b^2+15a^3bd+4a^2b^3+42a^2b^2d+42ab^3d+18ab^2d^2+12b^4d+18b^3d^2)}{2(a^3+3a^2b+3a^2d+3ab^2+6abd+3b^2d)^2}
$$
by Proposition~\ref{proposition:Paolo-2}. Thus, if $H$ is ample, then $a$, $b$ and $d$ are positive, so $\beta_H(S)<0$, which implies that $(Y,H)$ is K-unstable.

\subsection{Family \textnumero 3.28}
\label{subsection:3-28}
Recall that $\mathbb{P}^1\times\mathbb{F}_1$ is the unique smooth Fano threefold in Family \textnumero 3.28. Refer to Section~\ref{subsection:4-11} and set
 $d=0$. Then, for the divisor $L$ as in Section~\ref{subsection:4-11}, we have $L\sim_{\mathbb{Q}} \pi^*(H)$,
where $H$ is a big and nef divisor on $\mathbb{P}^1\times\mathbb{F}_1$. By Proposition~\ref{proposition:Paolo-2}, we get
$$
\beta_H\big(\widetilde{S}\big)=\beta_L\big(\widetilde{S}\big)=-\frac{2bc}{3(2b+c)^2}.
$$
Moreover, if $H$ is ample, then $a$, $b$, $c$ are positive, so that $\beta_H(\widetilde{S})<0$, and $(\mathbb{P}^1\times\mathbb{F}_1,H)$ is K-unstable.

\subsection{Family \textnumero 3.31}
\label{subsection:3-31}
Refer to Section~\ref{subsection:4-11} and the notation therein.
Let $f\colon X\to Y$ be the contraction of the extremal ray $\mathbb{R}_{\geqslant 0}[C_2]$.
Then $Y$ is the smooth Fano threefold in Family \textnumero 3.31, and $f$ contracts the surface $\widetilde{A}$ to a smooth curve.
Let $c=0$ in the computations in Section~\ref{subsection:4-11}. Then $L\sim_{\mathbb{Q}} f^*(H)$,
where $H$ is a big and nef divisor on $Y$, and $L$ is the divisor as in Section~\ref{subsection:4-11}. By Proposition~\ref{proposition:Paolo-2}, we have
$$
\beta_H\big(\widetilde{S}\big)=\beta_L\big(\widetilde{S}\big)=-\frac{12a^2bd^3+12ab^2d^3+24abd^4+3ad^5+3bd^5+d^6}{2(6abd+3ad^2+3bd^2+2d^3)^2}.
$$
Moreover, if $H$ is ample, then $a$, $b$, $d$ are positive, so that $\beta_H(\widetilde{S})<0$, and $(Y,H)$ is K-unstable.

\subsection{Family \textnumero 3.30}
\label{subsection:3-30}
Refer to Section~\ref{subsection:4-12} and let $f\colon X\to Y$ be the contraction of the extremal ray $\mathbb{R}_{\geqslant 0}[C_2]$.
Then $Y$ is the smooth Fano threefold in Family \textnumero 3.30, and $f$ contracts $F_2$ to a smooth curve. Let $c=0$ in the computation in Section~\ref{subsection:4-12}. Then $L\sim_{\mathbb{Q}} f^*(H)$,
where $H$ is a big and nef divisor on $Y$ and $L$ is a big and nef  divisor as in Section~\ref{subsection:4-12}. By Proposition~\ref{proposition:Paolo-2}, we have
$$
\beta_H\big(E\big)=\beta_L\big(E\big)=-\frac{3a^3d+3a^2b^2+18a^2bd+4ab^3+30ab^2d+6abd^2+12b^3d}{(a^2+3ab+3ad+3b^2+6bd)^2}.
$$
Moreover, if $H$ is ample, then $a$, $b$, $d$ are positive, so $\beta_H(E)<0$, and $(Y,H)$ is K-unstable.

\end{document}